\documentclass[11pt, one side, article]{memoir}

\settrims{0pt}{0pt} 
\settypeblocksize{*}{34pc}{*} 
\setlrmargins{*}{*}{1} 
\setulmarginsandblock{.98in}{.98in}{*} 
\setheadfoot{\onelineskip}{2\onelineskip} 
\setheaderspaces{*}{1.5\onelineskip}{*} 
\checkandfixthelayout

\usepackage{amsthm}
\usepackage{mathtools}

\usepackage[inline]{enumitem}
\usepackage{ifthen}
\usepackage[utf8]{inputenc} 
\usepackage{xcolor}

\usepackage[backend=biber, backref=true, maxbibnames = 10, style = alphabetic]{biblatex}
\usepackage[bookmarks=true, colorlinks=true, linkcolor=blue!50!black,
citecolor=orange!50!black, urlcolor=orange!50!black, pdfencoding=unicode]{hyperref}
\usepackage[capitalize]{cleveref}

\usepackage{tikz}

\usepackage{amssymb}
\usepackage{newpxtext}
\usepackage[varg,bigdelims]{newpxmath}
\usepackage{mathrsfs}
\usepackage{dutchcal}
\usepackage{fontawesome}
\usepackage{ebproof}
\usepackage{stmaryrd}
\usepackage{float}
\usepackage{lipsum}

  \crefformat{enumi}{\card#2#1#3}
  \crefalias{chapter}{section}

  \addbibresource{spacetime.bib} 

  \hypersetup{final}

  \setlist{nosep}
  \setlistdepth{6}


  \usetikzlibrary{ 
  	cd,
  	math,
  	decorations.markings,
		decorations.pathreplacing,
  	positioning,
  	arrows.meta,
  	shapes,
		shadows,
		shadings,
  	calc,
  	fit,
  	quotes,
  	intersections,
    circuits,
    circuits.ee.IEC
  }
  
  \tikzset{
biml/.tip={Glyph[glyph math command=triangleleft, glyph length=.95ex]},
bimr/.tip={Glyph[glyph math command=triangleright, glyph length=.95ex]},
}

\tikzset{
	tick/.style={postaction={
  	decorate,
    decoration={markings, mark=at position 0.5 with
    	{\draw[-] (0,.4ex) -- (0,-.4ex);}}}
  }
} 
\tikzset{
	slash/.style={postaction={
  	decorate,
    decoration={markings, mark=at position 0.5 with
    	{\node[font=\footnotesize] {\rotatebox{90}{$\approx$}};}}}
  }
}


\tikzset{
	oriented WD/.style={
		every to/.style={out=0,in=180,draw},
    label/.style={
    	font=\everymath\expandafter{\the\everymath\scriptstyle},
      inner sep=0pt,
      node distance=2pt and -2pt},
    semithick,
    node distance=1 and 1,
    decoration={markings, mark=at position \stringdecpos with \stringdec},
    ar/.style={postaction={decorate}},
    execute at begin picture={\tikzset{
    	x=\bbx, y=\bby,
      every fit/.style={inner xsep=\bbx, inner ysep=\bby}}}
    },
    string decoration/.store in=\stringdec,
    string decoration={\arrow{stealth};},
    string decoration pos/.store in=\stringdecpos,
    string decoration pos=.7,
    bbx/.store in=\bbx,
    bbx = 1.5cm,
    bby/.store in=\bby,
    bby = 1.5ex,
    bb port sep/.store in=\bbportsep,
    bb port sep=1.5,
    bb port length/.store in=\bbportlen,
    bb port length=4pt,
    bb penetrate/.store in=\bbpenetrate,
    bb penetrate=0,
    bb min width/.store in=\bbminwidth,
    bb min width=1cm,
    bb rounded corners/.store in=\bbcorners,
    bb rounded corners=2pt,
    bb spider/.style={
    	bb port sep=1, bb port length=10pt, bbx=.4cm, bb min width=.4cm, bby=.8ex},
    bb small/.style={
    	bb port sep=1, bb port length=2.5pt, bbx=.4cm, bb min width=.4cm, bby=.7ex},
		bb medium/.style={
			bb port sep=1, bb port length=2.5pt, bbx=.4cm, bb min width=.4cm, bby=.9ex},
    bb/.code 2 args={
    	\pgfmathsetlengthmacro{\bbheight}{\bbportsep * (max(#1,#2)+1) * \bby}
      \pgfkeysalso{draw,minimum height=\bbheight,minimum
       width=\bbminwidth,outer sep=0pt,
         rounded corners=\bbcorners,thick,
         prefix after command={\pgfextra{\let\fixname\tikzlastnode}},
         append after command={\pgfextra{\draw
            \ifnum #1=0{} \else foreach \i in {1,...,#1} {
            	($(\fixname.north west)!{\i/(#1+1)}!(\fixname.south west)$) +(-\bbportlen,0) coordinate (\fixname_in\i) -- +(\bbpenetrate,0) coordinate (\fixname_in\i')}\fi 
            \ifnum #2=0{} \else foreach \i in {1,...,#2} {
            	($(\fixname.north east)!{\i/(#2+1)}!(\fixname.south east)$) +(-
\bbpenetrate,0) coordinate (\fixname_out\i') -- +(\bbportlen,0) coordinate (\fixname_out\i)}\fi;
           }}}
		},
			bb name/.style={
     	append after command={
				\pgfextra{\node[anchor=north] at (\fixname.north) {#1};}
			}
		}
  }

\tikzset{Yonepart/.pic={
	\node[bb={1}{2},bb name = {\tiny$X_{11}$}] (X11) {};
	\node[bb={2}{2},below right=of X11,bb name = {\tiny$X_{12}$}] (X12) {};
	\node[bb={2}{1}, above right=of X12,bb name = {\tiny$X_{13}$}] (X13) {};
	\node[bb={0}{0}, inner xsep=10pt, fit={($(X11.north west)+(.3,1.5)$) (X12)  ($(X13.east)+(-.3,0)$)},bb name = {\scriptsize $Y_1$}] (Y1) {};
	\coordinate (Y1_in1') at (X11_in1-|Y1.west);
	\coordinate (Y1_in1) at (X11_in1-|Y1.west);
	\coordinate (Y1_in2') at (X12_in2-|Y1.west);
	\coordinate (Y1_in2) at (X12_in2-|Y1.west);
	\coordinate (Y1_out1') at (X13_out1-|Y1.east);
	\coordinate (Y1_out1) at (X13_out1-|Y1.east);
	\coordinate (Y1_out2') at (X12_out2-|Y1.east);
	\coordinate (Y1_out2) at (X12_out2-|Y1.east);
	\draw (Y1_in1') to (X11_in1);	
	\draw (Y1_in2') to (X12_in2);
	\draw (X11_out1) to (X13_in1);
	\draw (X11_out2) to (X12_in1);
	\draw (X12_out1) to (X13_in2);
	\draw (X12_out2) to (Y1_out2');
	\draw (X13_out1) to (Y1_out1');
	\coordinate (bottombox) at ($(X12.south)$);
	\coordinate (rightbox) at ($(X13.east)$);
	\coordinate (Y1northwest) at ($(Y1.north west)$);
	}
}
  \tikzset{Ytwopart/.pic={
	\node[bb={2}{2}, bb name = {\tiny$X_{21}$}] (X21) {};
	\node[bb={1}{2},above right=-1 and 1 of X21,bb name = {\tiny$X_{22}$}] (X22) {};
	\node[bb={0}{0}, inner xsep=10pt, fit={($(X21.south west)+(-.25,0)$) ($(X22.north east)+(.25,3.5)$)},bb name = {\scriptsize$Y_2$}] (Y2){};
	\coordinate (Y2_in1') at (X21_in2-|Y2.west);
	\coordinate (Y2_in1) at (X21_in2-|Y2.west);
	\coordinate (Y2_out1') at (X22_out2-|Y2.east);
	\coordinate (Y2_out1) at (X22_out2-|Y2.east);
	\coordinate (Y2_out2') at (X21_out2-|Y2.east);
	\coordinate (Y2_out2) at (X21_out2-|Y2.east);	
	\draw (Y2_in1') to (X21_in2);
	\draw (X21_out1) to (X22_in1);
	\draw (X22_out2) to (Y2_out1');
	\draw let \p1=(X22.south east), \p2=($(Y2_out2)$), \n1={\y1-\bby}, \n2=\bbportlen in
	  (X21_out2) to (\x1+\n2,\n1) -- (\x1+\n2,\n1) to (Y2_out2');
	\draw let \p1=(X22.north east), \p2=(X21.north west), \n1={\y1+\bby}, \n2=\bbportlen in
          (X22_out1) to[in=0] (\x1+\n2,\n1) -- (\x2-\n2,\n1) to[out=180] (X21_in1);
          }
}

\tikzset{SmallNestingPic/.pic={
  \path (0,0) pic [purple] {Yonepart};
  \path ($(rightbox)+(4,-5)$) pic [blue!40!black] {Ytwopart};
  
  \node[bb={0}{0}, fit={($(Y1northwest)+(-.5,4)$) ($(Y2.south east)+(1,0)$)}, bb name={\small $N$}] (N) {};
  \coordinate[above=\bby] (helper) at (Y2.north west);
	\coordinate (N_in1') at (Y1_in2-|N.west);
	\coordinate (N_in1) at (Y1_in2-|N.west);
	\coordinate (N_out1') at (helper-|N.east);
	\coordinate (N_out1) at (helper-|N.east);
	\coordinate (N_out2') at (Y2_out2-|N.east);
	\coordinate (N_out2) at (Y2_out2-|N.east);	  
  \draw (N_in1') to (Y1_in2);
  \draw let \p1=(Y2.north west),\p2=(Y2.north east),\n1={\y2+\bby},\n2=\bbportlen in
  (Y1_out1) to (\x1+\n2,\n1)--(\x2+\n2,\n1) to (N_out1');
  \draw (Y1_out2) to (Y2_in1);
  \draw (Y2_out2) to (N_out2');
  \draw let \p1=(Y2.north east), \p2=(Y1.north west), \n1={\y2+\bby}, \n2=\bbportlen in
          (Y2_out1) to[in=0] (\x1+\n2,\n1) -- (\x2-\n2,\n1) to[out=180] (Y1_in1);
          }
}

\theoremstyle{definition}
\newtheorem{definitionx}{Definition}[chapter]

\theoremstyle{plain}

\newtheorem{theorem}[definitionx]{Theorem}
\newtheorem{proposition}[definitionx]{Proposition}

\newtheorem{lemma}[definitionx]{Lemma}

\newtheorem*{theorem*}{Theorem}
\newtheorem*{proposition*}{Proposition}
\newtheorem*{corollary*}{Corollary}
\newtheorem*{lemma*}{Lemma}
\newtheorem*{warning*}{Warning}

\newenvironment{example}
  {\pushQED{\qed}\examplex}
  {\popQED\endexamplex}
  
 \newenvironment{remark}
  {\pushQED{\qed}\remarkx}
  {\popQED\endremarkx}
  
  \newenvironment{definition}
  {\pushQED{\qed}\definitionx}
  {\popQED\enddefinitionx}

	
\DeclareSymbolFont{stmry}{U}{stmry}{m}{n}
\DeclareMathSymbol\fatsemi\mathop{stmry}{"23}

\DeclareFontFamily{U}{mathx}{\hyphenchar\font45}
\DeclareFontShape{U}{mathx}{m}{n}{
      <5> <6> <7> <8> <9> <10>
      <10.95> <12> <14.4> <17.28> <20.74> <24.88>
      mathx10
      }{}
\DeclareSymbolFont{mathx}{U}{mathx}{m}{n}
\DeclareFontSubstitution{U}{mathx}{m}{n}
\DeclareMathAccent{\widecheck}{0}{mathx}{"71}


\renewcommand{\ss}{\subseteq}


\DeclareMathOperator{\ob}{Ob}

\DeclareMathOperator{\Funn}{Fun}
\DeclareMathOperator{\constt}{const}

\newcommand{\cat}[1]{\mathcal{#1}}
\newcommand{\Cat}[1]{\mathbf{#1}}
\newcommand{\Fun}[1]{\mathit{#1}}

\newcommand{\id}{\mathrm{id}}

\newcommand{\To}[2][]{\xrightarrow[#1]{#2}}

\newcommand{\card}{\,^{\#}}

\newcommand{\op}{^\tn{op}}

\newcommand{\tn}[1]{\textnormal{#1}}

\newcommand{\ul}[1]{\underline{#1}}

\newcommand{\nn}{\mathbb{N}}

\newcommand{\rr}{\mathbb{R}}

\newcommand{\smset}{\Cat{Set}}
\newcommand{\smcat}{\Cat{Cat}}
\newcommand{\prof}{\Cat{Prof}}

\newcommand{\yon}{\mathcal{y}}
\newcommand{\poly}{\Cat{Poly}}

\newcommand{\tri}{{\mathbin{\triangleleft}}}

\newcommand{\biglens}[2]{
     \begin{bmatrix}{\vphantom{f_f^f}#2} \\ {\vphantom{f_f^f}#1} \end{bmatrix}
}
\newcommand{\littlelens}[2]{
     \begin{bsmallmatrix}{\vphantom{f}#2} \\ {\vphantom{f}#1} \end{bsmallmatrix}
}
\newcommand{\lens}[2]{
  \relax\if@display
     \biglens{#1}{#2}
  \else
     \littlelens{#1}{#2}
  \fi
}

\newcommand{\rrnn}{\rr_{\ge 0}}

\newcommand{\poset}{\Cat{Poset}}
\newcommand{\FP}{{\Cat{Pos}}}
\newcommand{\FC}{{\Cat{Cat}}}
\newcommand{\FS}{{\Cat{sSet}}}
\newcommand{\finpos}{{\Cat{FinPos}}}
\newcommand{\emb}{\hookrightarrow}
\newcommand{\Expr}{{\Cat{Expr}}}
\newcommand{\End}{{\Cat{End}}}
\newcommand{\expr}{{\Cat{Expr}_n}}
\newcommand{\A}{{\cat{A}}}
\newcommand{\C}{{\cat{C}}}
\newcommand{\D}{{\cat{D}}}

\newcommand{\mink}{\Cat{Mink}}
\newcommand{\omink}{\overline{\mink}}
\newcommand{\minko}{\mink_1}
\newcommand{\ominko}{\omink_1}
\newcommand{\minkt}{\mink_2}
\newcommand{\ominkt}{\omink_2}

\newcommand{\dep}{\Fun{dep}}
\renewcommand{\O}{{\cat{O}}}
\renewcommand{\P}{{\cat{P}}}

\newcommand{\BV}{\mathrm{BV}}
\newcommand{\Parr}{\bindnasrepma}
\newcommand{\Par}{\;\bindnasrepma\;}


\linespread{1.1}
\allowdisplaybreaks
\setsecnumdepth{section}
\settocdepth{section}
\setlength{\parindent}{15pt}
\setcounter{tocdepth}{1}

\begin{document}

\title{Duoidal Structures for Compositional Dependence}

\author{Brandon T. Shapiro\thanks{Corresponding author: shapiro@topos.institute} \and David I. Spivak}

\date{}

\maketitle
\vspace{-.5cm}

\begin{abstract}
We provide a categorical framework for mathematical objects for which there is both a sort of ``independent'' and ``dependent'' composition. Namely we model them as duoidal categories in which both monoidal structures share a unit and the first is symmetric. We construct the free such category and observe that it is a full subcategory of the category of finite posets. Indeed each algebraic expression in the two monoidal operators corresponds to the poset built by taking disjoint unions and joins of the singleton poset. We characterize these ``sum-join expressible'' posets as precisely those which contain no ``zig-zags.'' We then move on to describe categories equipped with $n$-ary operations for each $n$-element finite poset; we refer to them as ``dependence categories'' since they allow for combinations of objects based on any network of dependencies between them. 

These structures model various sorts of dependence including the space-like and time-like juxtaposition of weighted probability distributions in relativistic spacetime, which we model using polynomial endofunctors on the category of sets, as well as the runtimes for multiple computer programs run in parallel and series, which we model using the tropical semiring structure on nonnegative real numbers. With these examples in mind, we conclude by describing ways in which morphisms in a partial monoidal category can be ``decorated'' in a coherent manner by objects in a dependence category, such as labeling a network of parallel programs with their runtimes.
\end{abstract}

\tableofcontents*

\chapter{Introduction}

When modeling a collection of interacting systems, one assumes---either implicitly or explicitly---a model of time. The actions of system A in the present moment affect the very \emph{possibilities} of system B in future moments. The philosophy of time is generally considered to be somewhat mysterious, whereas \emph{dependence} is a much simpler concept.

Consider the following two pictures of a $(4\times 3)$-grid, where we interpret each point as an event and we impose the \emph{dependency condition} that an event can only occur once the event directly below it has occurred:
\begin{equation}\label{eqn.wentworth}
\begin{tikzpicture}[baseline=(X22)]
	\foreach \j in {1,...,3}{
		\foreach \i in {1,...,4}{
			\node at (\i,\j) (X\i\j) {$\bullet$};
		}
		\node[rounded corners, draw, fit=(X1\j) (X4\j)] (X\j) {};
	}
%
	\foreach \j in {1,...,3}{
		\foreach \i in {1,...,4}{
			\node at (6+\i,\j) (Y\i\j) {$\bullet$};
		}
	}
	\begin{scope}[every node/.style={rounded corners, draw, rotate=45}]
		\node[inner xsep=10pt, inner ysep=7pt] at ($(Y41)!.5!(Y41)$) (Y1) {};
		\node[inner xsep=30pt, inner ysep=7pt] at ($(Y31)!.5!(Y42)$) (Y2) {};
		\node[inner xsep=50pt, inner ysep=7pt] at ($(Y21)!.5!(Y43)$) (Y3) {};
		\node[inner xsep=50pt, inner ysep=7pt] at ($(Y11)!.5!(Y33)$) (Y4) {};
		\node[inner xsep=30pt, inner ysep=7pt] at ($(Y12)!.5!(Y23)$) (Y5) {};
		\node[inner xsep=10pt, inner ysep=7pt] at ($(Y13)!.5!(Y13)$) (Y6) {};
	\end{scope}
	\end{tikzpicture}
\end{equation}
In the left-hand picture, we imagine that time proceeds such that the whole bottom layer occurs simultaneously, then the middle layer, then the top layer. In the right-hand picture, we imagine that time proceeds such that the $(4,1)$-event occurs first, followed by the $(3,1)$ and $(4,2)$-events occurring simultaneously, etc. These are two incomparable dependency structures on the same set, both of which respect the imposed dependency condition.

Classically, both in physics and its various category-theoretic models (\cite{caucats}, \cite{causalstructure}, \cite{promonoidal}), the notion of dependence or independence is treated as a property of processes involving various events. For example, the theory of relativity codifies in physics the empirical logic that two objects sufficiently separated in space cannot interact without a suitable amount of time to reach each other. For example, the processes could be programs modeled as morphisms between their input and output data, where composition and the tensor product respectively represent running two programs sequentially and in parallel.

%


When two processes are temporally composed or spatially juxtaposed, this additional information often behaves in an algebraic manner. For instance, the sequential composition of two programs has a runtime given by addition, while the runtime of two programs juxtaposed in parallel is given by a maximum.

We provide here a category-theoretic account of this algebraic structure. Here, objects can be combined in two different ways---the first denoted $\otimes$ and corresponding to independence, and the second denoted $\tri$ and corresponding to dependence---and morphisms flow in the direction of increased dependence. We get operations of higher arity for each abstract arrangement of points and dependencies (arrows) between them as shown in \eqref{eqn.myposet}. The notion of dependence should not be circular, i.e.\ if $a$ depends on $b$ and $b$ depends on $a$, then $a=b$; thus we assume that each such arrangement forms a partially ordered set (poset).
\begin{figure}[h]
\begin{equation}\label{eqn.myposet}
\begin{tikzcd}
& d & \\
b \ar{ur} & & c \ar{ul} \\
& a \ar{ul} \ar{ur} \ar{uu}
\end{tikzcd}
\end{equation}
\caption{The poset corresponding to the dependence structure $a \;\tri\; (b \otimes c) \;\tri\; d$. That is $d$ depends on $b$ and $c$, which are independent of one another, and they in turn depend on $a$.}\label{fig.dependencies}
\end{figure}
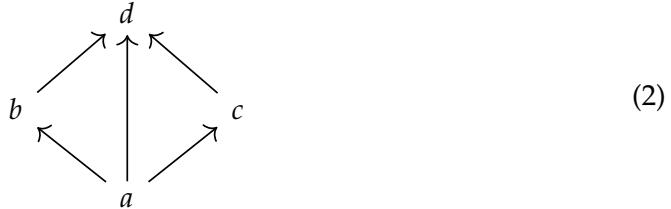


It turns out that all this is well-modeled by a refinement of \emph{duoidal categories} \cite{2monoidal,firstduoidal}, i.e.\ categories equipped with two interacting monoidal structures $\otimes,\tri$. In particular, we define what we call \emph{physical duoidal categories}, modeled after physics in the sense of 3-dimensional space and 1-dimensional time. These are simply duoidal categories for which the two units agree and for which $\otimes$ is symmetric. \cref{sec.minkowski} is devoted to showing that Minkowski spacetime is an example of a physical duoidal category, justifying the name.

The free physical duoidal category on a single generator turns out to be a full subcategory of $\poset$ spanned by what we call the \emph{expressible} posets. These are the posets that arise from algebraic expressions in $\otimes$ and $\tri$, as in \cref{eqn.myposet}, by interpreting each variable to be a singleton poset, $\otimes$ to be the sum (disjoint union) of posets, and $\tri$ to be the \emph{join} of posets (see \cref{posetduoidal}). 

Many posets, however, do not arise from these constructions, such as the poset $N$ depicted in \eqref{eqn.zigzagintro}.
\begin{equation}\label{eqn.zigzagintro}
\begin{tikzcd}
b & d \\
a \uar & c \uar \ar{ul} 
\end{tikzcd}
\end{equation}
In fact, our first main result is that having a full copy of $N$ is the only obstruction to being expressible.

\begin{theorem*}[{\cref{expressibility}}]
A finite poset is expressible if and only if it does not contain any fully embedded copy of the poset $N$.
\end{theorem*}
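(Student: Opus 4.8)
The statement is the order-theoretic form of the classical series--parallel/$N$-free dichotomy: $\otimes$ is parallel composition, $\tri$ is series composition (ordinal sum), and $Z$ is the forbidden ``$N$''. I would prove the two implications separately, each by induction.

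\emph{Expressibility implies no embedded $Z$.} Induct on an expression for $P$ in $\otimes$ and $\tri$. A singleton has one element, hence no full copy of $Z$. If $P = P_1 \otimes P_2$ with $P_1,P_2$ expressible and $\iota\colon Z \hookrightarrow P$ is a full embedding, then $\iota^{-1}(P_1)$ and $\iota^{-1}(P_2)$ partition (the underlying set of) $Z$ into two classes with no comparabilities between them, since nothing in $P_1$ is comparable to anything in $P_2$ and $\iota$ reflects the order. But the comparability graph of $Z$ is the four-vertex path $P_4$, which is connected, so one class is empty and $\iota$ factors through $P_1$ or $P_2$; the inductive hypothesis finishes this case. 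If instead $P = P_1 \tri P_2$, the same partition exhibits $Z$ as a down-set $\iota^{-1}(P_1)$ lying entirely below an up-set $\iota^{-1}(P_2)$; a direct inspection of the (few) down-sets of $Z$ shows no splitting into two nonempty parts with this property exists, so again $\iota$ factors through a summand and induction applies.

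\emph{No embedded $Z$ implies expressibility.} Induct on $|P|$; the cases $|P|\le 1$ are immediate (the empty poset is the empty expression, a singleton is the generator). For $|P|\ge 2$ it suffices, since every induced subposet of $P$ again has no embedded $Z$, to show that $P$ is a nontrivial disjoint union or a nontrivial join: the two pieces are then smaller and $Z$-free, hence expressible by induction. Let $G$ be the comparability graph of $P$. If $G$ is disconnected, grouping its connected components into two nonempty families writes $P = P_1 \otimes P_2$. If the complement $\overline G$ (the incomparability graph) is disconnected, with components $C_1,\dots,C_s$ and $s\ge 2$, then any two distinct $C_i,C_j$ are fully comparable across; propagating a single comparability along an incomparability-path inside a component shows that one of $C_i,C_j$ lies entirely below the other, which totally orders the components and exhibits $P = C_1 \tri \cdots \tri C_s$. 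It remains to rule out that $G$ and $\overline G$ are both connected: by the classical structure theory of cographs, a graph on $\ge 2$ vertices with itself and its complement both connected has an induced $P_4$; its four vertices span an induced subposet of $P$ whose comparability graph is $P_4$, and every order with comparability graph $P_4$ is isomorphic to $Z$ (its two transitive orientations are $Z$ and $Z\op\cong Z$), contradicting $Z$-freeness.

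\emph{Where the difficulty lies.} The ``expressible $\Rightarrow$ $Z$-free'' half and the disjoint-union case are bookkeeping; the content is the connected-join step, i.e.\ turning the local condition ``no embedded $Z$'' into the global statement ``$P$ is an iterated join''. I would isolate this as a lemma and prove it via the two graph reductions above. The only external input is the cograph fact, which itself admits a quick induction on the number of vertices (the base case $n=4$ being exactly the identification of posets with comparability graph $P_4$); and the step that deserves care is checking that the ``lies entirely below'' relation on the incomparability-components is a genuine total order compatible with the order of $P$, so that $P$ really is their iterated join.
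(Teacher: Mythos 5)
Your proof is correct, and while the easy direction (expressible $\Rightarrow$ no full copy of $Z$) coincides with the paper's inductive argument, your converse takes a genuinely different route. The paper argues by strong induction with an explicit binary split: it sets $P_\bot$ to be the elements strictly below \emph{every} maximal element, shows $P_\bot$ is nonempty by a hands-on argument (a minimal ``connecting set'' among non-maximal elements would produce an embedded $Z$ if it had more than one element), and then shows $P = P_\bot \Join P_\top$ directly from $Z$-freeness. You instead run the classical series--parallel/cograph dichotomy: if the comparability graph $G$ is disconnected you get a $\otimes$-split; if the incomparability graph $\overline G$ is disconnected you get the full join decomposition into its components at once (your propagation argument correctly shows the cross-comparabilities are uniform and transitively ordered, which is indeed the step needing care); and if both are connected you invoke Seinsche's theorem that such a graph contains an induced $P_4$, whose unique transitive orientations are $Z$ and $Z\op\cong Z$, contradicting $Z$-freeness. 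What your route buys is modularity and a direct bridge to the known $N$-free/series--parallel literature, plus a one-shot decomposition into all incomparability components rather than a binary split; what it costs is reliance on the external $P_4$/cograph lemma (which you rightly note admits its own short induction, so this is a citation rather than a gap), whereas the paper's argument is self-contained and more algorithmic in flavor, locating the forbidden $Z$ concretely whenever its canonical $P_\bot \Join P_\top$ split fails.
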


Our next main result shows that a category $\cat{C}$ is compatibly equipped with an $n$-ary operation for each expressible $n$-element poset precisely when $\cat{C}$ carries the structure of a physical duoidal category.

\begin{theorem*}[{\cref{expr_operad}}]
For a category $\C$, having suitably compatible operations of the form $\C^n \to \C$ for each expressible poset on $n$ elements is equivalent to carrying the structure of a physical duoidal category, i.e.\ one in which $\otimes$ is symmetric and $\otimes,\tri$ share a unit.
\end{theorem*}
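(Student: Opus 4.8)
The plan is to assemble the expressible posets into an operad enriched in $\Cat{Cat}$ and to identify its algebras with physical duoidal categories. Let $\mathcal{E}$ be the collection whose $n$-ary part $\mathcal{E}(n)$ is the category of expressible posets with underlying set $\{1,\dots,n\}$ together with the morphisms between them inherited from $\Expr$, the free physical duoidal category on one generator; the symmetric group $S_n$ acts by relabelling. Operadic composition $\mathcal{E}(n)\times\mathcal{E}(k_1)\times\cdots\times\mathcal{E}(k_n)\to\mathcal{E}(k_1+\cdots+k_n)$ substitutes $Q_i$ for the point $i$ of $P$, decreeing that every element of $Q_i$ lies below every element of $Q_j$ whenever $i<_P j$ (the $P$-indexed ordinal sum of the $Q_i$). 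The one point requiring attention in setting this up is that substituting expressible posets in this way yields an expressible poset: this follows either by substituting chosen $\otimes/\tri$-expressions for the $Q_i$ into one for $P$, or from \cref{expressibility}, since an embedded copy of $Z$ in the result can be traced back to one inside some $Q_i$ or inside $P$. Equivariance, associativity and unitality of the composition, and functoriality in the morphisms, are then routine. With $\mathcal{E}$ in hand, an $\mathcal{E}$-algebra structure on a category $\C$ is by definition exactly a functor $\bigotimes_P\colon\C^n\to\C$ for each expressible $n$-element poset $P$, a natural transformation $\bigotimes_P\Rightarrow\bigotimes_{P'}$ for each morphism $P\to P'$ in $\mathcal{E}(n)$, all compatible with composition and substitution --- which is precisely the notion of ``suitably compatible operations $\C^n\to\C$'' in the statement.

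Next I would give $\mathcal{E}$ a presentation by generators and relations. The generating operations are $\ast\in\mathcal{E}(0)$ (the empty poset), $\id\in\mathcal{E}(1)$, $\otimes\in\mathcal{E}(2)$ (the $2$-element antichain) and $\tri\in\mathcal{E}(2)$ (the $2$-element chain); the generating morphisms are the symmetry witnessing that $\otimes$ is invariant under the $S_2$-action, the unit isomorphisms expressing that $\ast$ is a two-sided unit for both $\otimes$ and $\tri$, and the interchange $\gamma\colon(\text{-}\tri\text{-})\otimes(\text{-}\tri\text{-})\to(\text{-}\otimes\text{-})\tri(\text{-}\otimes\text{-})$, realized by the map of expressible posets that adds the two ``cross'' relations. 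Every expressible poset is by definition an $\otimes/\tri$-expression in singletons, so these operations generate $\mathcal{E}$; the content lies in the relations. Here I would invoke the description of $\Expr$ already obtained in the course of \cref{posetduoidal,expressibility}: coherence for physical duoidal categories (the case in which $\otimes$ is symmetric and the two units agree) shows that a morphism of $\Expr$ is determined by the underlying map of posets, so that any two parallel composites of structure maps coincide, and hence the only relations that must be imposed on the generators are associativity, unitality and symmetry for $\otimes$, associativity and unitality for $\tri$ with the same unit $\ast$, and the compatibility of $\gamma$ with all of these --- exactly the axiom list of a physical duoidal category.

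Granting the presentation, the equivalence is a matter of unwinding. Restricting an $\mathcal{E}$-algebra on $\C$ to the generators yields functors $\otimes,\tri\colon\C\times\C\to\C$, an object $I\in\C$, a symmetry and an interchange, and the generating relations translate word-for-word into the physical duoidal axioms. Conversely, given a physical duoidal structure on $\C$, one defines $\bigotimes_P$ by picking an expression tree for $P$ and iterating $\otimes$ and $\tri$ (with $I$ for the empty factors), and the components at morphisms from $\gamma$, the symmetries and the unitors; duoidal coherence guarantees that this is independent, up to canonical isomorphism, of the chosen tree, and that substitution-compatibility holds. The two assignments are visibly mutually inverse on generators and agree in general by the same coherence argument, giving the asserted equivalence between physical duoidal structures on $\C$ and systems of expressible-poset operations on $\C$ --- functorially in $\C$, if one wants the statement at the level of the corresponding $2$-categories.

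I expect the genuine obstacle to be the coherence input just used: establishing that the generating operations satisfy no relations beyond the physical duoidal axioms, equivalently that $\Expr$ really does have the expected description, with each morphism pinned down by its underlying map of expressible posets. Duoidal categories have subtler coherence behaviour than symmetric monoidal ones, so this is not automatic; I would either adapt the known duoidal coherence results \cite{2monoidal,firstduoidal} to the physical case or argue directly by induction on the number of elements, using \cref{expressibility} to constrain the possible expression trees for a given poset. Everything else --- the operad axioms for $\mathcal{E}$, the unwinding of algebras, and the checks that the round trips are identities --- is bookkeeping.
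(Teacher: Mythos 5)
Your overall architecture is the same as the paper's: assemble the expressible posets into a $\Cat{Cat}$-enriched symmetric operad via lexicographic substitution and identify its algebras with physical duoidal categories; the paper calls this operad $\Expr$, a full sub-operad of the operad $\FP$ of all finite posets, and proves the statement in exactly this form. One caveat on the framing: the identification must be with \emph{pseudo}algebras, i.e.\ functors $\boxtimes_n^P\colon\C^n\to\C$ compatible with operadic unit, composition and symmetries only up to coherent natural isomorphism; a strict algebra would force strictly associative and unital $\otimes$ and $\tri$. The paper's converse direction accordingly chooses a representative expression for each expressible poset (a quasi-inverse to the equivalence of $\Expr_n$ with expressions) and derives the algebra coherences from the normal duoidal coherences. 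You gesture at this (``up to canonical isomorphism'') but should make it explicit.

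The substantive issue is the step you defer. The claim that $\Expr$ is presented by the physical duoidal generators and relations---equivalently, that a duoidal structure map between two expressions exists (and is unique) precisely when there is an identity-on-elements inclusion of the corresponding posets---is not something obtained in the course of \cref{posetduoidal} or \cref{expressibility}; it is a separate theorem of the paper (\cref{fullyfaithful}), and it is the mathematical heart of \cref{expr_operad}. Uniqueness of parallel structure maps is, in the paper's setup, imposed in the definition of the free expressions, so the real content is fullness: every identity-on-elements inclusion $P\hookrightarrow Q$ of expressible posets must be exhibited as a composite of structure maps. The paper proves this by strong induction on the number of elements, and the only hard case is $P=P_1\sqcup P_2$ with $Q=Q_1\Join Q_2$: one forms the full sub-posets $P_{i,j}$ (elements of $P_i$ lying in $Q_j$) and the corresponding $Q_{i,j}$, uses the $Z$-criterion of \cref{expressibility} to see these restrictions are again expressible, and factors the inclusion as $P_1\sqcup P_2\hookrightarrow (P_{1,1}\Join P_{1,2})\sqcup(P_{2,1}\Join P_{2,2})\hookrightarrow (Q_{1,1}\sqcup Q_{2,1})\Join(Q_{1,2}\sqcup Q_{2,2})\hookrightarrow Q_1\Join Q_2$, the middle map being the interchanger with the inclusions $P_{i,j}\hookrightarrow Q_{i,j}$ substituted into its four arguments. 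Your proposed strategy (induction on the number of elements using \cref{expressibility}) is the right one and matches the paper, but as written the proposal invokes the presentation/coherence claim rather than proving it, so this factorization argument is the gap you would need to fill.
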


In fact, physical duoidal categories often carry additional structure of a sort that category theorists appear not to have considered. Namely, one can consider categories that can interpret \emph{every poset} on $n$ elements---not just the expressible ones---as an $n$-ary operation on $\cat{C}$. We call these \emph{dependence categories}. For example, imagine programs $a,b,c,d$ with the dependency poset depicted in \cref{eqn.zigzagintro}, where $b$ depends on $a$ and $c$, and where $d$ depends only on $c$. Then given parallel computing resources, we can create a new program that runs $a$ and $c$ in parallel, that runs $d$ as soon as $c$ finishes, and that runs $b$ as soon as $a$ and $c$ both finish. 

In many settings, a physical duoidal category can be extended to a dependence category by using limits to derive the additional operations from $\otimes$ and $\tri$.

\begin{theorem*}[{\cref{dependenceextension}}]
If $\C$ is a physical duoidal category with finite connected limits preserved by $\otimes$ and $\tri$, then $\C$ forms a dependence category.
\end{theorem*}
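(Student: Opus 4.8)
The plan is to build, for every finite poset $P$, an operation $\underline{P}\colon \C^S\to\C$ (with $S$ the underlying set of $P$) as a finite connected limit of the operations attached to expressible posets, and then to check that these operations cohere into a dependence structure. Concretely, let $\mathcal{E}(P)$ be the poset of expressible posets on $S$ whose order relation contains that of $P$, ordered by inclusion of relations. Each inclusion $Q\subseteq Q'$ in $\mathcal{E}(P)$ is a composite of single‑relation additions, and iterating the comparison $\otimes\imp\tri$ coming from the interchange law — together with the coherence packaged into \cref{expr_operad}, which presents physical duoidal categories as algebras for the (category‑enriched) operad of expressible posets — produces a natural transformation $\underline{Q}\imp\underline{Q'}$. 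One then sets
\[
\underline{P}\;:=\;\lim_{Q\in\mathcal{E}(P)}\underline{Q}.
\]

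The first thing to verify is that $\mathcal{E}(P)$ is finite and connected, so that this limit exists by hypothesis and $\underline{P}$ is again a functor (limits of functors being pointwise). Finiteness is clear. For connectedness, every $Q\in\mathcal{E}(P)$ sits below one of its own linear extensions, which is a chain — hence expressible — and is also a linear extension of $P$; and the graph of linear extensions of $P$ is connected, two of them related by an adjacent transposition of $P$‑incomparable elements being joined in $\mathcal{E}(P)$ through their meet, an expressible ``broken chain'' $C_{<}\tri(\{x\}\otimes\{y\})\tri C_{>}$. If $P$ is already expressible it is the minimum of $\mathcal{E}(P)$, so $\underline{P}$ reproduces the given operation; in particular the singleton goes to $\id_\C$ and the empty poset to the shared unit, and relabelling the elements of $P$ is visibly compatible. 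It remains to check compatibility with poset substitution (the operadic composition for all finite posets).

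The substitution law $\underline{P[Q_1,\dots,Q_n]}\iso\underline{P}\circ(\underline{Q_1},\dots,\underline{Q_n})$ is the heart of the matter, and I would prove it first for expressible $P$ and then bootstrap. For expressible $P$, $\underline{P}$ is a composite of $\otimes$'s and $\tri$'s applied to its inputs; since $\otimes$ and $\tri$ preserve finite connected limits in each variable, substituting $\underline{Q_i}=\lim_{S_i\in\mathcal{E}(Q_i)}\underline{S_i}$ and pulling the limits outward (Fubini, plus the fact that a product of connected categories is connected) yields $\underline{P}\circ(\underline{Q_\bullet})\iso\lim_{(S_1,\dots,S_n)}\underline{P[S_1,\dots,S_n]}$, using the closed operad structure on expressible posets (\cref{expr_operad}) to identify $\underline{P}(\underline{S_\bullet})$ with $\underline{P[S_\bullet]}$. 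A cofinality lemma then identifies this with $\lim_{T\in\mathcal{E}(P[Q_\bullet])}\underline{T}$: the functor $(S_\bullet)\mapsto P[S_\bullet]$ lands in $\mathcal{E}(P[Q_\bullet])$ and is initial, because for any $T\supseteq P[Q_\bullet]$ the tuple $S_i:=T|_{Q_i}$ is, by \cref{expressibility}, expressible and is the maximum of $\{S_\bullet : P[S_\bullet]\subseteq T\}$, so that comma category is nonempty and connected. For arbitrary $P$ one writes $\underline{P}\circ(\underline{Q_\bullet})=(\lim_{R\in\mathcal{E}(P)}\underline{R})\circ(\underline{Q_\bullet})=\lim_{R}\underline{R}(\underline{Q_\bullet})=\lim_{R}\underline{R[Q_\bullet]}$ (precomposition preserves limits) and matches it with $\underline{P[Q_\bullet]}=\lim_{T\in\mathcal{E}(P[Q_\bullet])}\underline{T}$ via a further cofinality argument relating expressible refinements of $P[Q_\bullet]$ to pairs formed by an expressible refinement $R$ of $P$ and an expressible refinement of $R[Q_\bullet]$.

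I expect the combinatorial cofinality lemmas of the last step — the precise accounting of how adding order relations interacts with substituting posets for points, and the verification that the relevant comma categories are connected — to be the main obstacle; the remainder is a careful diagram chase organized around the single structural input that $\otimes$ and $\tri$ preserve finite connected limits. One should also confirm at the outset that each comparison $\underline{Q}\imp\underline{Q'}$ is independent of the chosen factorization into single‑relation additions, which is once more coherence supplied by \cref{expr_operad}.
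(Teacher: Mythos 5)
Your construction is the one the paper uses: you define $\boxtimes_n^P$ as the limit of $\boxtimes_n^Q$ over the (finite, connected) category of expressible posets $Q$ containing $P$, and you verify operadic compatibility by pulling connected limits through $\otimes$ and $\tri$ and invoking a cofinality/initiality statement. Your connectedness argument via linear extensions joined through ``broken chains'' is a perfectly good substitute for the paper's \cref{expressible_limit} (which proves more --- that $P$ is the actual intersection --- but only connectedness is used in the theorem), and your stage-one cofinality for expressible outer $P$ is correct: the terminal-object argument with $S_i := T|_{Q_i}$, expressible by \cref{expressibility}, is exactly the paper's device.

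The gap is the step you yourself flag as ``the main obstacle'': the cofinality comparing $\lim_{R\in\mathcal{E}(P)}\lim_{T\in\mathcal{E}(R[Q_\bullet])}\underline{T}$ with $\lim_{T\in\mathcal{E}(P[Q_\bullet])}\underline{T}$ for non-expressible $P$ is never proved, and it is where the real combinatorial content lives; it is precisely the paper's \cref{overequivalence}. The paper sidesteps your two-stage bootstrap by proving in one stroke that operadic composition induces an initial functor $P/\Expr_n \times P_1/\Expr_{m_1}\times\cdots\times P_n/\Expr_{m_n} \to P\circ(P_1,\dots,P_n)/\Expr_m$ for arbitrary $P,P_i$, by exhibiting a terminal object in each comma category over an expressible $R \supseteq P\circ(P_1,\dots,P_n)$: take $R_i$ to be the restriction of $R$ to the $i$th block (expressible, as in your argument) \emph{together with} the induced block poset $R'$ on $\ul n$, where $i\le j$ iff every element of the $i$th block lies below every element of the $j$th block in $R$; then $R'\circ(R_1,\dots,R_n)$ is terminal among composites sandwiched between $P\circ(P_1,\dots,P_n)$ and $R$. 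This block-poset construction is the ingredient missing from your sketch: in your bootstrap, nonemptiness and connectedness of the relevant comma categories are not automatic (you cannot take $R=P$, since $P$ need not be expressible), and one must also verify that $R'$, hence $R'\circ(R_1,\dots,R_n)$, is $Z$-free so that it actually lies in $\Expr$. With that lemma supplied, your Fubini chain closes exactly as in the paper; as written, the proof is incomplete at exactly the point you identified.
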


We formalize dependence categories using a certain \emph{categorical operad} $\FP$ of finite posets. Just as ordinary symmetric operads consist of a set of operations in each arity (as well as unit, composites, and symmetries), symmetric categorical operads use a category of operations in each arity to encode both operations and potentially non-invertible coherence morphisms between them. 

In this formalism, expressible posets form a categorical sub-operad $\expr\ss\FP$ of finite posets, which shows that any dependence category restricts to a physical duoidal category. 

We emphasize two main examples throughout the paper, motivated by modeling parallel computing and abstract events in spacetime. The first is the ``tropical'' real numbers, namely the poset of non-negative real numbers with $\otimes$ and $\tri$ given by $\max$ and $+$, modeling runtimes in parallelizable programs. The second is polynomial endofunctors on the category of sets (see \cite[Section 2.1]{aggregation} and \cref{poly_duoidal}), i.e.\ functors $\smset\to\smset$ of the form
\[ \sum_{I \in p(1)} \yon^{p[I]} \]
where $p(1)$ and each $p[I]$ are sets. Elements $I \in p(1)$ can be interpreted as possible ``outcomes'' of an event in spacetime, while the elements $i \in p[I]$ are the potential ``stimuli'' produced by the outcome $I$. An independent juxtaposition $p \otimes q$ of two polynomials has both outcomes and stimuli given by pairs of those in $p,q$, while the outcomes of a dependent composition $p \;\tri\; q$ include an outcome $I$ in $p$ and a choice of outcome in $q$ for each stimulus in $p[I]$. The stimuli can also be regarded as an un-normalized probability distribution on the set of outcomes.

We conclude by returning to the idea of processes carrying additional information. We model processes as morphisms in a partial monoidal category, and the additional information as a \emph{decoration}, assigning to each process an object in a physical duoidal category $\C$ where composition and tensor products of processes relate to $\otimes$ and $\tri$ in $\C$. In special cases of partial monoidal categories, a dependence structure on $\C$ serves as an algorithm for defining this decoration in a way that respects complex networks of processes (\cref{dependencedecoration}). In particular, we use this formalism to describe how the dependence category structure on non-negative real numbers encodes an efficient protocol for running networks of parallelizable programs (\cref{efficientparallel}).

\section*{Plan of the paper}

We begin in \cref{chap.duoidal} by introducing duoidal categories, physical duoidal categories, and our main examples of tropical reals and polynomial functors. We also begin to discuss finite posets and their relationship with free expressions in a normal duoidal category, along with a physics perspective on a subcategory of finite posets that can be realized as arrangements of points in relativistic spacetime.

In \cref{chap.expressible} we introduce expressible posets, characterize them using the obstruction pattern in \eqref{eqn.zigzagintro}, and show that free expressions in a physical duoidal category and the structure maps between them are equivalent to expressible posets. We then introduce the categorical operads of finite posets and expressible posets and show that the latter's pseudoalgebras are physical duoidal categories.

In \cref{chap.dependence} we discuss examples of dependence categories, defined as pseudoalgebras for the categorical operad of finite posets, and give conditions for extending a physical duoidal structure on a category to a dependence structure.

Finally in \cref{chap.decoration} we give category theoretic descriptions of how processes, regarded as morphisms in a partial monoidal category, can be decorated by objects in a physical duoidal category.

\section*{Acknowledgements}

Many of the ideas in this paper were inspired by conversations with John Wentworth, who first drew for us the picture in \cref{eqn.wentworth}. We would also like to thank Andre Kornell for suggesting several helpful references, Harrison Grodin for conversations which led to the inclusion of several additional examples, and Martti Karvonen for pointing out an important correction. Thanks also to Ilia Nekrasov for pointing us to a reference for ``$N$-free posets''.

This material is based upon work supported by the Air Force Office of Scientific Research under award number FA9550-20-1-0348.

\chapter{Duoidal Structures}\label{chap.duoidal}

Duoidal categories are categories with two different monoidal structures that are compatible with one another only in a certain lax manner. They arise in many different neighborhoods of mathematics, and here we show that when the monoidal units agree these categories echo principles from relativistic physics by encoding how events can be juxtaposed with different choices of causal dependencies. These dependencies are formalized as finite posets, whose combinatorics encode an elegant description of the structure of such a duoidal category.

\section{Duoidal Categories}

Duoidal categories were first defined in \cite[Definition 6.1]{2monoidal} and have at times been referred to as ``2-monoidal'' categories.

\begin{definition}[Duoidal category]
A \emph{duoidal category} consists of a category $\C$ equipped with two monoidal structures $(\yon_\otimes,\otimes)$ and $(\yon_\tri,\tri)$ such that the functors $\tri\colon \C \times \C \to \C$ and $\yon_\tri\colon 1 \to \C$ 
are lax monoidal with respect to $(\yon_\otimes,\otimes)$ on $\C$ and $(\yon_\otimes \times \yon_\otimes,\otimes \times \otimes)$ on $\C \times \C$, compatibly with the coherence isomorphisms for $(\yon_\tri,\tri)$.%
\footnote{We use the notation $\otimes,\tri,\yon$ from our main example of polynomial functors on $\smset$, as discussed in \cref{poly_duoidal}.}
\end{definition}

Alternatively, duoidal categories can be defined by the two monoidal structures along with the generating structure maps
\begin{equation}\label{eqn.duoidal}
(a \;\tri\; b) \otimes (c \;\tri\; d) \to (a \otimes c) \;\tri\; (b \otimes d) \qquad\quad \yon_\tri \otimes \yon_\tri \to \yon_\tri \qquad\quad \yon_\otimes \to \yon_\otimes \;\tri\; \yon_\otimes \qquad\quad \yon_\otimes \to \yon_\tri
\end{equation}
natural in $a,b,c,d$ which satisfy equations ensuring that they commute in a suitable sense with the associators and unitors of the monoidal structures. 
The morphism on the left in \eqref{eqn.duoidal} is called the \emph{lax interchanger}.

\begin{example}[Coproducts, products]\label{cocartmonoidal}
Let $(\C,\yon_\tri, \tri)$ be any monoidal category with finite coproducts (not necessarily respected by $\tri$ in any way). Then $(\varnothing, \sqcup)$ and $(\yon_\tri, \tri)$ form a duoidal structure on $\C$, with the maps
\[
(a \;\tri\; b) \sqcup (c \;\tri\; d) \to (a \sqcup c) \;\tri\; (b \sqcup d) \qquad\quad \yon_\tri \sqcup \yon_\tri \to \yon_\tri \qquad\quad \varnothing \to \varnothing \;\tri\; \varnothing \qquad\quad \varnothing \to \yon_\tri
\] 
induced by the universal property of coproducts and initial objects.

Dually, in any monoidal category $(\C,\yon_\otimes, \otimes)$ with finite products, the structures $(\yon_\otimes, \otimes)$ and $(1, \times)$ form a duoidal structure on $\C$.
\end{example}

\begin{example}[Braided monoidal categories]
If $\C$ is a braided monoidal category, then two copies of its monoidal structure form a duoidal structure, where the three rightmost maps in \eqref{eqn.duoidal} are identities and the lax interchange map on the left is given by applying the braiding to $b$ and $c$.
\end{example}

\begin{example}[Tropical reals]\label{tropical}
The poset of non-negative real numbers $\rrnn$ with the usual order can be regarded as a category, with two monoidal structures given by $(0,\max)$ and $(0,+)$. These two operations make $\rrnn$ a duoidal category, since the units agree and 
\[ (a + b) \max {(c + d)} \le (a \max c) + (b \max d) \]
for all $a,b,c,d \in \rrnn$. 

This example, inspired by conversations with Harrison Grodin in relation to \cite{costaware}, is motivated by the analysis of runtime in parallel programming: the runtime of two programs run in parallel is the maximum of the two runtimes, while the runtime of two programs run in series is the sum of their runtimes. The lax interchanger corresponds to the observation that given four programs, running two sequential pairs in parallel is generally faster than waiting for both of the first two parallel programs to finish before starting either of the second two.
\end{example}

\begin{example}[Polynomial functors]\label{poly_duoidal}
Let $\poly$ denote the category of polynomial endofunctors on $\smset$ and natural transformations between them. A polynomial functor has the form
\[ p = \sum_{I \in p(1)} \; \prod_{i \in p[I]} \; \yon \; =: \sum_{I \in p(1)} \; \yon^{p[I]}, \]
where $p(1)$ and each $p[I]$ are sets, $\sigma$ denotes sum (disjoint union), $\prod$ denotes cartesian product, $\yon$ is the identity functor, and $\yon^{p[I]}$ is the functor $\smset(p[I],-)\colon\smset \to \smset$ represented by the set $p[I]$. 

There are many duoidal structures on $\poly$ by \cref{cocartmonoidal} as $\poly$ has (co)products inherited from the functor category $\Funn(\smset,\smset)$, but we will focus on a duoidal structure involving neither the product nor coproduct. The \emph{Dirichlet tensor product} (see \cite[Proposition 2.1.11]{aggregation}) sends a pair of polynomials $p$ and $q$ to
\[ p \otimes q = \sum_{\substack{I \in p(1) \\ J \in q(1)}} \; \prod_{\substack{i \in p[I] \\ j \in q[J]}} \; \yon \; = \sum_{\substack{I \in p(1) \\ J \in q(1)}} \; \yon^{p[I] \times q[J]} \]
while the \emph{composition product} sends $p$ and $q$ to their composite as endofunctors on $\smset$, resulting in the polynomial
\[ p \;\tri\; q = \sum_{I \in p(1)} \; \prod_{i \in p[I]} \; \sum_{J \in q(1)} \; \prod_{j \in q[J]} \; \yon \; = \sum_{\substack{I \in p(1) \\ f \colon p[I] \to q(1)}} \; \prod_{\substack{i \in p[I] \\ j \in q[fi]}} \; \yon \; = \sum_{\substack{I \in p(1) \\ f \colon p[I] \to q(1)}} \; \yon^{\sum\limits_{i \in p[I]} q[fi]} \]
which agrees with the classical composition of polynomials.

Both products $\otimes$ and $\tri$ form monoidal structures, the former symmetric, with the identity polynomial $\yon$ as the unit. As shown in \cite[Proposition 2.1.14]{aggregation}, these two monoidal structures make $\poly$ a duoidal category, with the lax interchanger given by the natural transformation
\[
\begin{array}{ccccccccc}
(p \;\tri\; q) \otimes (r \;\tri\; s) & = &
\! \! \displaystyle\sum_{\substack{I \in p(1) \\ J \colon p[I] \to q(1)}} \! \! &
\! \! \displaystyle\sum_{\substack{K \in r(1) \\ L \colon r[K] \to s(1)}} \! \! &
\! \! \displaystyle\prod_{\substack{i \in p[I] \\ j \in q[Ji]}} \! &
\! \! \displaystyle\prod_{\substack{k \in r[K] \\ \ell \in s[Lk]}} \! &
\yon \\ \\
& = & 
\! \! \displaystyle\sum_{\substack{I \in p(1) \\ K \in r(1)}} \! \! &
\! \! \displaystyle\sum_{\substack{J \colon p[I] \to q(1) \\ L \colon r[K] \to s(1)}} \! \! &
\! \! \displaystyle\prod_{\substack{i \in p[I] \\ k \in r[K]}} \! &
\! \! \displaystyle\prod_{\substack{j \in q[Ji] \\ \ell \in s[Lk]}} \! &
\yon \\ \\
& \to & 
\! \! \displaystyle\sum_{\substack{I \in p(1) \\ K \in r(1)}} \! \! &
\! \! \displaystyle\sum_{\substack{R \colon p[I] \times r[K] \\ \to q(1) \times s(1)}} \! \! &
\! \! \displaystyle\prod_{\substack{i \in p[I] \\ k \in r[K]}} \! &
\! \! \displaystyle\prod_{\substack{j \in q[\pi_1R(i,k)] \\ \ell \in s[\pi_2R(i,k)]}} \! &
\yon & 
= & (p \otimes r) \;\tri\; (q \otimes s),
\end{array}
\]
where the last map takes $(J \colon p[I] \to q(1), L \colon r[K] \to s(1))$ to the function 
\[ J \times L \colon p[I] \times r[K] \\ \to q(1) \times s(1). \]
\end{example}

\section{Physical duoidal categories}

The duoidal categories we are primarily interested in are those for which the units of $\otimes$ and $\tri$ agree, where we will denote both by $\yon$. In this case all but the interchanger in \eqref{eqn.duoidal} are isomorphisms (though we will sometimes refer to them as if they were identities), and we obtain many new structure maps that are not present in general duoidal categories. The most important is what we call the \emph{comparitor}:
\begin{equation}\label{eqn.comparitor}
a \otimes b = (a \;\tri\; \yon) \otimes (\yon \;\tri\; b) \to (a \otimes \yon) \;\tri\; (\yon \otimes b) = a \;\tri\; b
\end{equation}
but there are many other new structure maps as well, e.g.\
\begin{equation}\label{eqn.interleave}
(a \;\tri\; b) \otimes (c \;\tri\; d) = (a \;\tri\; b \;\tri\; \yon) \otimes (\yon \;\tri\; c \;\tri\; d) \to (a \otimes \yon) \;\tri\; (b \otimes c) \;\tri\; (\yon \otimes d) = a \;\tri\; (b \otimes c) \;\tri\; d
\end{equation}
\begin{align*}\label{eqn.unwind}
(a \;\tri\; b) \otimes (c \;\tri\; d) &= 
(\yon \;\tri\; \yon \;\tri\; a \;\tri\; b) \otimes (c \;\tri\; d \;\tri\; \yon \;\tri\; \yon)\\&\to 
(\yon \otimes c) \;\tri\; (\yon \otimes d) \;\tri\; (a \otimes \yon) \;\tri\; (b \otimes \yon) \to c \;\tri\; d \;\tri\; a \;\tri\; b
\end{align*}

\begin{definition}[Normal duoidal category]
A duoidal category is called \emph{normal} when the lax monoidal functors $\tri \colon \C \times \C \to \C$ and $\yon_\tri \colon 1 \to \C$ are \emph{normalized}, which is to say, preserve units up to coherent isomorphism with respect to $(\yon_\otimes,\otimes)$.
\end{definition}

This definition (see also \cite[Definition 4]{commutativity}) corresponds precisely to the condition that the right three morphisms in \eqref{eqn.duoidal} are isomorphisms, though it is in fact sufficient to impose only that $\yon_\otimes \cong \yon_\tri$.

We will further restrict to the case in which $\otimes$ is symmetric, which in our physical interpretation (see \cref{sec.minkowski}) corresponds to the dimensionality of space being at least 3. As time will be treated as 1-dimensional, $\tri$ will not be assumed to be symmetric. 

\begin{definition}[Physical duoidal category]\label{def.physical}
A \emph{physical} duoidal category is a normal duoidal category $\C$ in which the monoidal structure $(\yon,\otimes)$ is symmetric and the symmetry isomorphisms commute with the interchangers in the sense that the diagram in \eqref{eqn.interchangesymmetry} commutes for all $a,b,c,d$ in $\C$.
\end{definition}

\begin{equation}\label{eqn.interchangesymmetry}
\begin{tikzcd}
(a \;\tri\; b) \otimes (c \;\tri\; d) \rar \dar[swap]{\cong} & (a \otimes c) \;\tri\; (b \otimes d) \dar{\cong} \\
(c \;\tri\; d) \otimes (a \;\tri\; b) \rar & (c \otimes a) \;\tri\; (d \otimes b) 
\end{tikzcd}
\end{equation}

\begin{example}[Tropical reals]
As 0 is the unit for both $\max$ and $+$, $\rrnn$ is physical duoidal. The \emph{comparitor} morphism from \eqref{eqn.comparitor} is the standard inequality $a \max {b} \le a + b$.
\end{example}

\begin{example}[Polynomial functors]
The duoidal category of polynomial functors is physical as $\otimes$ and $\tri$ share the same unit, $\yon$, and $\otimes$ is symmetric. For polynomials $p,q$, the natural transformation 
\[ p \otimes q = \sum_{\substack{I \in p(1) \\ J \in q(1)}} \; \prod_{\substack{i \in p[I] \\ j \in q[J]}} \; \yon \; \to \sum_{\substack{I \in p(1) \\ f \colon p[I] \to q(1)}} \; \prod_{\substack{i \in p[I] \\ j \in q[fi]}} \; \yon \; = p \;\tri\; q \]
from \eqref{eqn.comparitor} sends $(I,J)$ to $(I,\constt_J \colon p[I] \to q(1))$.
\end{example}

\begin{example}[BV Categories]\label{bvcats}
$\BV$-categories were introduced in \cite{bvprob} to model an extension of multiplicative linear logic called $\BV$, which introduces a noncommuting connective corresponding to sequential combination. Multiplicative linear logic is modeled by symmetric linear distributive categories, namely categories with two symmetric monoidal structures $(\top,\otimes)$ and $(\bot,\Parr)$ along with natural maps
\begin{equation}\label{eqn.distributivity}
a \otimes (b \Par c) \to (a \otimes b) \Par c \quad \textrm{and} \quad (a \Par b) \otimes c \to a \Par (b \otimes c) 
\end{equation}
satisfying various equations. A $\BV$-category is a symmetric linear distributive category equipped with an additional monoidal structure $(I,\varoslash)$ such that $(\top,\otimes)$ and $(I,\varoslash)$ form a normal (and hence physical) duoidal structure while $(I,\varoslash)$ and $(\bot,\Parr)$ are related by a lax interchanger satisfying a compatibility equation (see \cite[Theorem 3.3]{bvprob}). 

While we will not discuss them further, $\BV$-categories arise in multiple physically-motivated settings. In \cite[Section 5]{bvprob}, a $\BV$ structure is described for Girard's probabilistic coherence spaces, and \cite{causalbv} shows that a higher order causal theory (as first defined in \cite{causalstructure}) forms a $\BV$-category.
\end{example}

\begin{example}[Posets]\label{posetduoidal}
The category $\poset$ of posets forms a physical duoidal category, with $\yon\coloneqq\varnothing$ the empty poset, with $\otimes\coloneqq\sqcup$ the sum of posets, and with $\tri\coloneqq\Join$ given by the \emph{join} operation: for posets $P$ and $Q$, $P \Join Q$ is the poset with underlying set $P \sqcup Q$, where $x \le y$ when either $x \le_P y$, $x \le_Q y$, or $x \in P$ with $y \in Q$. Graphically, we can draw the relations in a poset as arrows in the corresponding category, with the posets $P \sqcup Q$ and $P \Join Q$ represented as the left and right diagrams respectively in \eqref{eqn.posetjoin}.
\begin{equation}\label{eqn.posetjoin}
\left(\begin{tikzcd}
P & Q
\end{tikzcd}\right)\qquad\qquad\left(\begin{tikzcd}
Q \\ P \uar
\end{tikzcd}\right)
\end{equation}
In the graphical representation of $P \Join Q$, the single arrow from $P$ to $Q$ denotes an arrow from each element of $P$ to each element of $Q$, as in the definition of the poset $P \Join Q$.

It is straightforward to check that $\varnothing$ is a unit for both $\sqcup$ and $\Join$, and the interchanger 
\[ (P \Join Q) \sqcup (R \Join S) \to (P \sqcup R) \Join (Q \sqcup S) \]
is an identity-on-elements inclusion, evident from the graphical representation in \eqref{eqn.posetinterchanger}.
\begin{equation}\label{eqn.posetinterchanger}
\left(\begin{tikzcd}
Q & S \\
P \uar & R \uar
\end{tikzcd}\right)\quad\emb\quad\left(\begin{tikzcd}
Q & S \\
P \uar \ar{ur} & R \uar \ar{ul}
\end{tikzcd}\right)
\end{equation}
The comparitor $P \sqcup Q \to P \Join Q$ is the identity-on-objects inclusion evident from \eqref{eqn.posetjoin}, and furthermore as the same is true for the interchanger all duoidal structure maps between posets will be identity-on-objects.
\end{example}

The diagrams in \cref{posetduoidal} provide a way to visualize all of the structure maps in a physical duoidal category as inclusions of posets.  In \eqref{eqn.posetjoin} and \eqref{eqn.posetinterchanger}, the symbols $P,Q,R,S$ can be interpreted as single elements rather than entire posets (or each as the singleton poset), so that each diagram represents a unique poset. For instance, the structure maps in \eqref{eqn.interleave} are represented by the left and right identity-on-elements poset inclusions in \eqref{eqn.interleavediagram}.
\begin{equation}\label{eqn.interleavediagram}
\left(\begin{tikzcd}
b & d \\
a \uar & c \uar
\end{tikzcd}\right)\quad\emb\quad\left(\begin{tikzcd}
b \rar & d \\
a \uar \rar & c \uar
\end{tikzcd}\right)\qquad\qquad\left(\begin{tikzcd}
b & d \\
a \uar & c \uar
\end{tikzcd}\right)\quad\emb\quad\left(\begin{tikzcd}
b & d \ar{dl} \\
a \uar & c \uar
\end{tikzcd}\right)
\end{equation}

In \cref{chap.expressible}, we characterize the posets that arise from $\sqcup$ and $\Join$ and show that the identity-on-elements morphisms between such posets correspond precisely to the structure maps in a physical duoidal category. This way structure maps can not only be modeled as poset inclusions but also recognized from them, such as the linear distributivity maps in \eqref{eqn.distributivity} (with $\tri$ in place of $\Parr$) which are represented by the poset inclusions in \eqref{eqn.distinclusions}.
\begin{equation}\label{eqn.distinclusions}
\left(\begin{tikzcd}
 & c \\
a & b \uar
\end{tikzcd}\right)\quad\emb\quad\left(\begin{tikzcd}
 & c \\
a \ar{ur} & b \uar
\end{tikzcd}\right)\qquad\qquad\left(\begin{tikzcd}
b & c \\
a \uar 
\end{tikzcd}\right)\quad\emb\quad\left(\begin{tikzcd}
b & c \\
a \uar \ar{ur}
\end{tikzcd}\right)
\end{equation}

\section{Minkowski spacetime}\label{sec.minkowski}

For an example inspired by classical physics, we describe a category whose objects are arrangements of points in 4-dimensional space, which we regard as Minkowski spacetime. In Minkowski spacetime, there is a dependency relation in which a point $p$ is dependent on another point $q$ if $p$ is in the ``light cone'' of $q$, meaning it is reachable from $q$ by traveling no greater than the speed of light.

\begin{definition}[Minkowski dependence]
The \emph{Minkowski causal dependence} (or simply \emph{Minkowski dependence}) relation on $\rr^4$ has $q \le p$ if $r\coloneqq p-q$ is \emph{timelike} or \emph{lightlike},\footnote{In the theory of relativity, a vector such as $r$ is timelike if it represents a path through spacetime whose speed is below the speed of light, lightlike if the speed is equal to the speed of light, and spacelike if the speed is greater than the speed of light. Assuming one has access to light speed, the first two are the paths that are physically traversible and hence relevant when discussiong causality.} meaning that 
\[c\;\sqrt{r_1^2 + r_2^2 + r_3^2}\;\leq r_4.\] 
Here $c$ denotes the speed of light and $(r_1,r_2,r_3,r_4)$ are the coordinates of $r$. 
\end{definition}

\begin{definition}
Define the category $\mink$ as follows:
\begin{itemize}
	\item its objects are pairs $(X,p)$ where $X$ is a finite set and $p \colon X \to \rr^4$ is any injective function; 
	\item its morphisms $(X,p) \to (Y,q)$ are given by functions $f \colon X \to Y$ such that if $p(a)$ is Minkowski dependent on $p(b)$ for $a,b \in X$, then $q(f(a))$ is Minkowski dependent on $q(f(b))$;
	\item identities and composites are given by identities and composites of functions, all of which preserve the Minkowski-dependence. 
\qedhere
\end{itemize}
\end{definition}

While this definition is where the physical intuition comes from, $\mink$ is in fact equivalent to a full subcategory of $\finpos$, the category of finite posets and monotone maps, because the morphisms in $\mink$ only use the Minkowski dependence partial order on $\rr^4$, not all the data of the point-embeddings. The functor $\dep \colon \mink \to \finpos$ sends $(X,p)$ to the poset structure on $X$ given by the Minkowski-dependence relation induced by $p$, and sends morphisms $f \colon (X,p) \to (Y,q)$ to the underlying function $X \to Y$, whose monotonicity is equivalent to the Minkowski dependence-preservation condition. This functor is clearly fully faithful, but not necessarily essentially surjective as that would require that every finite poset have a monotone embedding into Minkowski space.

Embeddings of posets into Minkowski space have been studied in \cite{minkdim}, where it is observed that posets which embed into Minkowski space with 3 spacial dimensions and 1 time dimension are precisely those which correspond to the inclusion order on some arrangement of filled-in 2-spheres in 3-dimensional Euclidean space. This is because for any points $p,q$ in Minkowski space with $p \le q$, the light cones of $p$ and $q$ intersect the 3-d plane at any fixed future time coordinate as a pair of filled-in spheres, the one for $q$ inside the one for $p$.

In \cite[Theorem 2.1]{sphereorders}, however, it is shown that for sufficiently large $n$, the $n \times n \times n$ 3-dimensional grid poset cannot be modeled as an inclusion order on a sphere arrangement, not just in 3-d space but in any dimension. This means that $\dep$ is not essentially surjective. But while $\mink$ does not include the entire category of finite posets, it does have enough to retain a physical duoidal structure.

\begin{proposition}\label{minkduoidal}
$\mink$ forms a physical duoidal subcategory of finite posets.
\end{proposition}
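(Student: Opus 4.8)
The plan is to verify that the physical duoidal structure on $\finpos$ from \cref{posetduoidal} restricts to the essential image of $\dep\colon\mink\to\finpos$. By the discussion preceding the statement, $\mink$ is equivalent (via the fully faithful functor $\dep$) to the full subcategory of $\finpos$ spanned by those posets admitting a monotone embedding into Minkowski space; equivalently, by the characterization recalled from \cite{minkdim}, by those posets isomorphic to an inclusion order on a finite arrangement of filled-in $2$-spheres in $\rr^3$. So it suffices to check that this class of posets is closed under the empty poset $\varnothing$, under disjoint union $\sqcup$, and under join $\Join$; once that is done, the duoidal structure maps (comparitor, interchanger, etc.) are all identity-on-elements inclusions of posets by \cref{posetduoidal}, and these are automatically morphisms in $\mink$ since on a fixed underlying set they only add relations — and a monotone bijection-on-underlying-sets always exists between the relevant Minkowski embeddings since the identity function is monotone from fewer relations to more. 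Hence all the coherence equations, holding in $\finpos$, descend to $\mink$.

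First I would handle $\varnothing$: the empty arrangement realizes the empty poset, so $\yon\in\mink$. Next, disjoint union: given sphere arrangements $\mathcal A$ and $\mathcal B$ realizing posets $P$ and $Q$, I would translate $\mathcal B$ far away from $\mathcal A$ in $\rr^3$ (and shrink if necessary) so that no sphere of $\mathcal A$ meets or contains any sphere of $\mathcal B$; the union arrangement then realizes $P\sqcup Q$. Equivalently and perhaps more cleanly, one can work directly with Minkowski embeddings $p\colon P\to\rr^4$, $q\colon Q\to\rr^4$ and translate $q$ in a spacelike direction by a large enough amount that $p(x)$ and $q(y)$ are spacelike-separated for all $x,y$ — this exactly realizes $P\sqcup Q$ since the only new pairs are incomparable. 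For the join $P\Join Q$, I would take a Minkowski embedding $p$ of $P$ into a bounded region near time $0$ and a Minkowski embedding $q$ of $Q$ into a bounded region near a large future time $t$; for $t$ large enough relative to the spatial extents, every $p(x)$ lies in the past light cone of every $q(y)$, so $p(x)\le q(y)$ for all $x,y$, while the internal orders of $P$ and $Q$ are unchanged. This is precisely the relation defining $P\Join Q$.

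The main obstacle — and the only genuinely delicate point — is the join construction: I must ensure that pushing $Q$ into the far future does not accidentally introduce relations \emph{within} $P$ or \emph{within} $Q$ beyond those already present, and that it does make \emph{every} cross pair comparable in the direction $P\to Q$. Both are controlled by a single quantitative estimate: if $P$ and $Q$ are each embedded so that all pairwise spatial distances are at most $D$ and the time coordinates within each piece differ by at most $\epsilon$, then choosing the time gap $t$ between the two clusters to exceed $cD$ forces every cross pair to be timelike/lightlike separated (with $P$ in the past), while taking $\epsilon$ small enough (rescaling each cluster) keeps the intra-cluster relations exactly as prescribed. I would spell out this inequality, note that it also shows disjoint union is the $t\to$ (spacelike, or simply "not large enough to connect") degenerate case, and then invoke \cref{posetduoidal} to transport the unit, associativity, symmetry of $\otimes$, and all duoidal coherence from $\finpos$ to $\mink$, concluding that $\mink$ is a physical duoidal subcategory.
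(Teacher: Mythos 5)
Your proposal is correct and follows essentially the same route as the paper: reduce to showing that the Minkowski-embeddable posets contain the unit and are closed under $\sqcup$ (by spacelike separation) and $\Join$ (by pushing one configuration far enough into the future that every cross pair becomes causally related), with all duoidal structure maps and coherences then inherited from $\finpos$ via the fully faithful $\dep$. Your quantitative light-cone estimate for the join and the explicit remark that identity-on-elements inclusions are automatically $\mink$-morphisms are finer-grained than the paper's sketch, but not a different argument.
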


\begin{proof}
To show this, it suffices to show that the posets which embed into Minkowski space include the singleton poset (which is evidently true) and are closed under disjoint union (sum) and join. For disjoint union, observe that for any two posets embedded into Minkowski space, moving them sufficiently far apart in the spacial direction will make them disjointly separated with respect to causal dependence. From the perspective of sphere arrangements, this is even simpler: the disjoint union of sphere arrangements represents the disjoint unions of their corresponding posets. For joins, observe that if any two posets are moved sufficiently far apart in the time direction the later one will eventually be contained in the intersected light cones of the earlier one, providing the relations between the two present in their join. To see this using sphere arrangements, note that the containment order of a sphere arrangement is unaffected by uniformly enlarging all of the spheres relative to their centers. When the spheres of the earlier arrangement have been sufficiently expanded, they will have a nonempty intersection into which the later arrangement can be embedded.
\end{proof}

While the proof of \cref{minkduoidal} defines the duoidal structure on $\mink$ relative to that on posets, note that defining $\otimes$ and $\tri$ directly on $\mink$ does not require any more specificity. The operation $\otimes$ juxtaposes two arrangements of points so that they are separated spacially, while the operation $\tri$ juxtaposes them so that they are separated in time with all possible dependencies between them. Because the morphisms in $\mink$ ``see'' only the dependencies of the points in the arrangements (in the sense that any two arrangements with the same dependence structure are uniquely isomorphic), any specific formulas for how arrangements are to be juxtaposed in space or time would be uniquely isomorphic so long as they meet the stated dependence conditions, and the unitor and associator isomorphisms are uniquely determined. This means that it is truly only necessary to observe that any two arrangements \emph{can} be juxtaposed entirely spacially or temporally.

Interestingly, this is not the case for Euclidean spacetime, where a point $p$ can be dependent on $q$ so long as $p$ has a larger time coordinate than $q$. This is because for any two arrangements in Euclidean spacetime which each contain points at different time coordinates, there is no way to juxtapose them in space that preserves their own dependencies without introducing any new ones between the two. Only in relativistic spacetime, where dependence requires a path through spacetime not exceeding the speed of light, can two such arrangements be guaranteed to be causally independent.

A more detailed category modeling Minkowski space might ask for morphisms between arrangements of points to also include paths between those points. This perspective will help illustrate how the symmetry of $\otimes$, and lack of symmetry of $\tri$, in the definition of physical duoidal category is linked to the setting of 3-dimensional space and 1-dimensional time.

\begin{definition}\label{fpath}
Given a morphism $f \colon (X,p) \to (Y,q)$ in $\mink$, an \emph{$f$-path} is a continuous function $\gamma \colon X \times I \to \rr^4$, where $X$ is regarded as a discrete space and $I$ is the unit interval, such that
\begin{itemize}
	\item for each $x \in X$ the restriction $\gamma_x \colon I \xrightarrow{x \times \id} X \times I \xrightarrow{\gamma} \rr^4$ is an embedded path from $p(x)$ to $q(f(x))$; and
	\item for each $i_1 \le j_1$ and $i_2 \le j_2$ in $I$ and $x_1,x_2 \in X$, if $\gamma(x_1,i_1) \le \gamma(x_2,i_2)$ then $\gamma(x_1,j_1) \le \gamma(x_2,j_2)$, where $\leq$ between points in $\rr^4$ denotes Minkowski dependence.
\qedhere
\end{itemize}
\end{definition}

The dependence condition can be viewed as imposing that the dependencies in $(X,p)$ are preserved at every stage on the path to $(Y,q)$, even if $x_1$ and $x_2$ traverse their paths at different rates. Furthermore, while $\gamma$ need not be an embedding, by the dependence condition no two paths $\gamma_{x_1}$ and $\gamma_{x_2}$ will intersect if $f(x_1) \neq f(x_2)$. 

\begin{definition}\label{pathmink}
The category $\overline{\mink}$ has the same objects as $\mink$, but a morphism from $(X,p)$ to $(Y,q)$ consists of a dependence-preserving function $f \colon X \to Y$ as well as an isotopy class $f$-paths. The identity on $(X,p)$ is given by the identity function on $X$ and the class of the $\id$-path $X \times I \xrightarrow{\pi_1} X \xrightarrow{p} \rr^4$. For $f \colon (X,p) \to (Y,q)$ and $g \colon (Y,q) \to (N,r)$ in $\mink$, composition of an $f$-path $\gamma$ and a $g$-path $\rho$ is the $g \circ f$-path which when restricted to $x \in X$ is the concatenation of $\gamma_x$ with $\rho_{f(x)}$. This composition preserves isotopy and is unital and associative up to isotopy, endowing $\omink$ with the structure of a category.
\end{definition}

Any morphism $f$ in $\mink$ has an $f$-path, and as any two tangles of paths in 4-dimensional space are related by isotopy this $f$-path is unique, so the data of these paths does not change the same category $\mink$.

\begin{proposition}\label{3dequiv}
$\omink$ is equivalent to $\mink$.
\end{proposition}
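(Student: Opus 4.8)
The plan is to show that the evident forgetful functor $U\colon\omink\to\mink$ — the one that is the identity on objects and sends a morphism $(f,[\gamma])$ to its underlying dependence‑preserving function $f$ — is an equivalence of categories. Functoriality of $U$ is immediate from the definition of composition in $\omink$ in \cref{pathmink}, and since $U$ is literally the identity on objects it is surjective on objects, hence essentially surjective. So the entire statement reduces to showing $U$ is fully faithful: for every morphism $f\colon(X,p)\to(Y,q)$ of $\mink$, the fibre of $U$ over $f$ — the set of isotopy classes of $f$-paths — is a singleton. There are thus two things to establish: fullness, i.e.\ every such $f$ admits at least one $f$-path; and faithfulness, i.e.\ any two $f$-paths are isotopic as $f$-paths.

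For fullness I would construct an $f$-path in two stages, exploiting the freedom available in $\rr^4$, in the same spirit as the proof of \cref{minkduoidal}. In the first stage, holding all spatial coordinates fixed, I would monotonically increase the time coordinate of each strand $\gamma(x,-)$ until every pair $x_1,x_2$ with $f(x_1)\le f(x_2)$ in $Y$ is separated by a time gap far exceeding $c$ times the spatial diameter of the whole configuration, while keeping points whose images are spacelike separated spatially far apart so that no spurious dependences appear. Because points are only ever pushed forward in time, the Minkowski dependence relation among the $\gamma(x,i)$ is non‑decreasing in $i$, and — crucially — it is stable under independently advancing the two strands' parameters, which is exactly the parametrization‑robust condition of \cref{fpath}. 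In the second stage the time gaps are now large enough that the spatial coordinates may be slid freely to their targets $q(f(x))$ without turning any timelike pair spacelike, so that again only dependences already present in $q\circ f$ can appear and the relation only grows. Concatenating the two stages, merging the strands $\gamma(x_1,-),\gamma(x_2,-)$ when $f(x_1)=f(x_2)$ (which the definition permits, since such strands share an endpoint), and perturbing slightly so that each strand is an embedding yields the required $f$-path.

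For faithfulness I would invoke that arcs sit in codimension $3$ inside $\rr^4$: given $f$-paths $\gamma_0,\gamma_1$ over the same $f$, a general‑position argument produces an ambient isotopy of $\rr^4$ carrying the strand system of $\gamma_0$ onto that of $\gamma_1$ rel endpoints (``$1$-manifolds do not knot in codimension $\ge 3$''), and one checks this isotopy can be chosen to pass only through $f$-paths — the slack provided by the large time gaps arranged above again makes the dependence condition survive along the isotopy. Putting the pieces together, $U$ is identity‑on‑objects, full, and faithful, hence an equivalence, which is the claim.

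The step I expect to be the main obstacle is the bookkeeping in fullness: arranging the first stage so that it realizes \emph{exactly} the causal order of $q\circ f$ and nothing more, while simultaneously respecting the reparametrization‑robust form of the dependence condition in \cref{fpath} (as opposed to mere monotonicity in a single parameter). The faithfulness step, by contrast, is essentially the standard unknotting fact in codimension $\ge 3$, needing only the mild extra verification that the unknotting isotopy can be kept inside the class of $f$-paths.
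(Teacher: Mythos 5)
Your proposal follows essentially the same route as the paper: the paper's entire justification is that the path data adds nothing, because every morphism $f$ of $\mink$ admits an $f$-path and any two such systems of paths in $4$-dimensional space are isotopic, which are precisely your fullness and faithfulness steps for the identity-on-objects forgetful functor. The paper offers no more detail than this, so your two-stage construction and codimension-$3$ unknotting argument simply flesh out what the paper asserts (though note that in your first stage you cannot both hold all spatial coordinates fixed and keep target-independent points spatially far apart, so the spatial separation must be arranged alongside, not after, the time separation).
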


However, \cref{3dequiv} is reliant on facts about 4-dimensional spacetime which do not hold in lower dimensions.

\begin{definition}
Let $\minko$ (resp. $\minkt$) be the analogous categories to $\mink$ in which spacetime has only 1 (resp. 2) spatial dimensions. Similarly let $\ominko$ (resp. $\ominkt$) be the analogue of $\omink$ with 1 (resp. 2) spatial dimensions.
\end{definition}

We can now describe how \cref{3dequiv} does not hold in lower dimensions, where relatedly spacial juxtaposition $\otimes$ is not symmetric in $\ominko$ or $\ominkt$.

\begin{example}\label{1obstruction}
In 1-dimensional space, consider $(X,p)$ with $X = \{x_1,x_2\}$, $p(x_1) = (1,0)$ and $p(x_2) = (-1,0)$, namely two points separated in space with no causal dependence, and $Y = \{y_1,y_2\}$ with $q(y_1) = (-1,1)$ and $q(y_2) = (1,1)$. The function $f \colon X \to Y$ sending $x_1$ to $y_1$ and $x_2$ to $y_2$ is a morphism in $\mink_1$, as there are no dependencies between the points in $X$. But any pair of disjoint paths from $p(x_1)$ to $q(y_1)$ and from $p(x_2)$ to $q(y_2)$ which do not intersect will both at some point cross the time axis in $\rr^2$. At these points in the paths, there is causal dependence in one direction or the other which is no longer present at the end of the paths, violating the dependence condition in \cref{fpath}. There is therefore no morphism in $\ominko$ with $f$ as its underlying function. 

This obstruction is closely related to the fact that two points on the interval cannot move along the interval into each other's positions without at some point intersecting one another.
\end{example}

Given an object $(X,p)$ in $\ominko$, there is a ``spatial'' linear order on the set of connected components of the associated dependence poset given by the spatial coordinate, as any connected component must be an adjacent block in the linear order on $X$ induced by the spatial coordinate. Based on the obstruction in \cref{1obstruction} to finding an $f$-path when $f$ reverses the spatial order, $\ominko$ is equivalent to the subcategory of $\mink_1$ containing only the functions $f \colon (X,p) \to (Y,q)$ which preserve not only the dependence order but also the spatial order on the dependence-connected components. This subcategory inherits the structure of a normal duoidal category, but $\otimes$ is no longer symmetric as the spacial juxtapositions of two singleton points in either order are isomorphic to the arrangements in \cref{1obstruction}. 

\begin{proposition}
$\ominko$ is a normal duoidal category in which $\otimes$ is not symmetric.
\end{proposition}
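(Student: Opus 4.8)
The plan is to flesh out the assertion made in the paragraph preceding the statement: first identify $\ominko$, via the equivalence just discussed, with the (non-full) subcategory $\S\ss\minko$ whose objects are those of $\minko$ and whose morphisms are the functions preserving \emph{both} the dependence order and the spatial order on dependence-connected components. By the same argument as \cref{minkduoidal} --- the singleton arrangement embeds in $1{+}1$-dimensional Minkowski space; two arrangements separated far enough in space realize $\sqcup$; and a later arrangement of bounded spatial extent eventually lies inside the (wedge-shaped) intersection of the future light cones of an earlier one, realizing $\Join$ --- the category $\minko$ is a physical, hence normal, duoidal subcategory of $\finpos$, with $\yon=\varnothing$, $\otimes=\sqcup$, $\tri=\Join$, and all coherence maps identity-on-elements. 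It then remains to see that this structure restricts to $\S$, and that $\otimes$ on $\S$ is not symmetric.

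For the restriction, I would check that $\otimes$, $\tri$, and all coherence maps of $\minko$ keep morphisms inside $\S$. Since $\otimes$ juxtaposes arrangements so that the first lies entirely to the left of the second, the connected components of $a\otimes b$ are those of $a$ followed, in the spatial order, by those of $b$; hence for $f\colon a\to a'$ and $g\colon b\to b'$ in $\S$ the map $f\otimes g$ acts as $f$ on the $a$-components and as $g$ on the $b$-components and so preserves the spatial component order. For $\tri$, note that $a\tri b$ is connected whenever both factors are nonempty, so it has a single component and the order-preservation condition on $f\tri g$ is vacuous (the degenerate cases where a factor is empty reduce to the $\otimes$ case or to a map into a connected poset). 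The $\otimes$-unitors and $\otimes$-associator are identity-on-elements and identity-on-components, so preserve the order; and every remaining structure map --- the interchanger, the comparitor, the normality isomorphisms $\yon\otimes\yon\iso\yon$ and $\yon\iso\yon\tri\yon$, and the unitors and associator for $\tri$ --- has codomain whose nonempty part is a single join, hence connected, so again the condition is vacuous. Thus $\S$ is a normal duoidal subcategory of $\minko$, and transporting along the equivalence makes $\ominko$ a normal duoidal category.

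For non-symmetry, let $a$ be a one-point arrangement and let $b$ be a two-element chain $y_1<y_2$, realized in $\minko$ by, say, the points $(0,0)$ and $(0,1)$, so that $b$ is an object of $\ominko$. Then $a\otimes b$ and $b\otimes a$ have the same underlying poset $P$ --- a point disjoint from a two-element chain --- but with the point-component and the chain-component in opposite spatial orders. An isomorphism $a\otimes b\to b\otimes a$ in $\S$ would in particular be a poset automorphism of $P$; such an automorphism must carry each component to a component of the same isomorphism type, and since the point-component and the chain-component are not isomorphic it fixes both components setwise, hence induces the identity on the two-element set of components, which is not an order isomorphism between the two opposite orders. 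Therefore $a\otimes b\not\iso b\otimes a$ in $\ominko$, so no natural isomorphism $a\otimes b\to b\otimes a$ exists, and $\otimes$ is not symmetric (indeed not even braided).

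I expect the main obstacle to be the restriction step: verifying that $\otimes$ and $\tri$ of $\S$-morphisms stay in $\S$ and that every duoidal coherence map preserves the spatial component order. The observation that makes this routine is that $\tri$ collapses its output to a single connected component, so any structure map whose codomain involves a $\tri$ imposes no constraint at all on component order; the only maps that could conceivably fail --- those assembled purely from $\otimes$ and unitors --- are identity-on-components and hence harmless.
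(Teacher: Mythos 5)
Your proof is correct, and its overall shape matches what the paper does (the paper gives no formal proof; the proposition is justified by the preceding paragraph): identify $\ominko$ with the subcategory of $\minko$ of maps preserving both the dependence order and the spatial order on dependence-connected components, observe that the normal duoidal structure of $\minko$ (obtained as in \cref{minkduoidal}, which works verbatim in $1{+}1$ dimensions) restricts to this subcategory, and then exhibit a failure of symmetry. Your careful restriction check---$\otimes$ acts blockwise on components, while any structure map whose codomain is a nonempty join lands in a connected poset, making the component-order condition vacuous---is exactly the routine verification the paper leaves implicit. Where you genuinely diverge is the non-symmetry witness. The paper points to \cref{1obstruction}: for two singleton arrangements the canonical swap $a\otimes b\to b\otimes a$ admits no $f$-path, so the symmetry inherited from $\minko$ is unavailable. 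But note that for singletons $a\otimes b$ and $b\otimes a$ are still isomorphic in $\ominko$, via the position-preserving relabeling, so that argument rules out the \emph{obvious} symmetry rather than every possible one (one would need a naturality argument to finish). Your choice of a point together with a two-element chain is stronger and cleaner: since any poset isomorphism must match the singleton component with the singleton component and the chain with the chain, the induced map on components reverses the spatial order, so $a\otimes b\not\cong b\otimes a$ at all---which excludes not only the symmetric but even a braided structure on $\otimes$, and does so without any appeal to naturality.
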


Finally, in 2-dimensional space, the obstruction to symmetry of spatial juxtaposition in \cref{1obstruction} is not present, as two points separated in space can be swapped along the sides of a circle in 2-dimensional space without introducing any additional dependence. However, these paths are no longer unique up to isotopy, as for instance a path of one point circling around another in space cannot be deformed to the constant path without passing above or below (in time) the second point. This leads to the following observation.

\begin{proposition}
$\ominkt$ is a normal duoidal category in which $\otimes$ is braided.
\end{proposition}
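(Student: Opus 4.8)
The plan is to transport the duoidal structure on $\omink\simeq\mink$ (\cref{3dequiv}, \cref{minkduoidal}) to the $2+1$-dimensional setting essentially verbatim, and then to isolate the one place where the argument genuinely changes --- the transposition of $\otimes$. First, as in \cref{minkduoidal}, one checks that $\minkt$ is equivalent to the full subcategory of $\finpos$ on the posets embeddable into $2+1$-dimensional Minkowski space, equivalently (via the light-cone/disk correspondence) the containment orders of finite arrangements of filled disks in $\rr^2$; this class contains the singleton and is closed under disjoint union (separate in space) and join (enlarge the disks of one arrangement until they have a common intersection and nest the other inside it). Hence $\otimes$ (spatial juxtaposition) and $\tri$ (temporal juxtaposition with all dependencies) are defined on objects of $\ominkt$ just as before, the shared unit is the empty arrangement $\yon=\varnothing$, normality is immediate since $\yon_\otimes=\yon_\tri=\varnothing$ makes the three non-interchanger maps of \eqref{eqn.duoidal} identities, and the interchanger, comparitor, associators and unitors are the identity-on-elements poset inclusions of \cref{posetduoidal}, realized in $\ominkt$ by the evident paths that slide blocks together or apart within a fixed spatial plane. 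Extending $\otimes,\tri$ to morphisms and checking functoriality and the coherence equations proceeds as in \cref{3dequiv}, using that in $\geq 2$ spatial dimensions every dependence-preserving function underlies some $f$-path.

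Next I would construct the braiding. For arrangements $(X,p)$ and $(Y,q)$ placed side by side in the plane, let $\beta_{(X,p),(Y,q)}\colon (X,p)\otimes (Y,q)\to (Y,q)\otimes (X,p)$ be the canonical transposition function together with the isotopy class of the $f$-path that slides the $X$-block and the $Y$-block past one another along the two arcs of a large circle, holding every point at a fixed time coordinate throughout. Since the whole motion takes place at constant time, no point of $X$ ever enters the light cone of a point of $Y$ or conversely, so the monotonicity clause of \cref{fpath} is satisfied --- this is exactly the move that is impossible in $1$ spatial dimension (cf.\ \cref{1obstruction}). Naturality in each variable, invertibility (slide the blocks back the other way), and the two hexagon identities then reduce to statements about composites of disjoint block-motions in $\rr^2$ up to dependence-respecting isotopy, which I would verify via the standard identification of such classes with morphisms in the fundamental groupoid of the configuration spaces of points in $\rr^2$.

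Finally I would show the braiding is not a symmetry. The double braid $\beta_{(Y,q),(X,p)}\circ\beta_{(X,p),(Y,q)}$ has the identity as its underlying function, but its $f$-path is the loop in which the $X$-block makes one full revolution around the $Y$-block in the plane. As observed just before the statement, this loop is not isotopic through $f$-paths to the constant path: any attempted contraction must at some stage pull the $X$-block through the region occupied by the $Y$-block, which forces the two blocks to pass one another in the time coordinate and hence to create, at that stage, a Minkowski dependence between a point of $X$ and a point of $Y$ that is present at neither endpoint --- a violation of the monotonicity clause of \cref{fpath}. Therefore $\beta\circ\beta\neq\id$, and $\ominkt$ is a normal duoidal category in which $\otimes$ is braided but not symmetric.

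The main obstacle, I expect, is that both the hexagon verification and the contraction argument must be carried out up to \emph{dependence-respecting} isotopy --- a constrained notion in which the causal monotonicity of \cref{fpath} must hold at every intermediate stage, not merely at the endpoints. One therefore cannot simply invoke braid-group facts; one must check, either directly or through a careful comparison with the unconstrained configuration-space groupoid, that the homotopies witnessing the hexagon laws can be chosen to respect the causal constraint, and that the double braid remains nontrivial once this constraint is imposed.
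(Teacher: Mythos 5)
Your proposal matches the paper's own (quite brief) justification: the paper establishes this proposition by exactly the observation you make — in two spatial dimensions the swap exists via sliding arrangements past one another along the arcs of a circle at fixed time, while the circling path cannot be contracted to the constant path without passing the blocks over or under each other in time, which would create a Minkowski dependence absent at the endpoints and violate the $f$-path condition, so $\otimes$ is braided but not symmetric. Your additional detail on inheriting the normal duoidal structure as in \cref{minkduoidal} and on verifying coherence up to dependence-respecting isotopy is a faithful elaboration of the same route rather than a different one.
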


The dimensions of space then correspond to the levels of symmetry in monoidal categories, with 1 dimension allowing for monoidal structure, 2 dimensions allowing for braiding, and 3 dimensions admitting symmetry. If we instead modeled arrangements in Minkowski space using higher dimensional categories, this classification would continue above dimension 3, but in this framework adding any additional dimensions keeps $\otimes$ symmetric.

Thus we see that our definition (\ref{def.physical}) of physical duoidal category evokes a notion of spacetime in which time can be ordered, but where space is totally unordered in the sense that it is at least 3-dimensional: the symmetry of $\otimes$ means that things can move around each other without getting tangled up.

\chapter{Sum-join expressible posets}\label{chap.expressible}

In the diagrams \eqref{eqn.posetjoin}, \eqref{eqn.posetinterchanger}, and \eqref{eqn.interleavediagram}, an algebraic expression composed of the binary operations $\otimes$ and $\tri$ is converted into a poset by treating each variable as the singleton poset on that letter, $\otimes\coloneqq\sqcup$ as the sum (disjoint union) of posets, and $\tri\coloneqq\Join$ as the join of posets. Structure maps between two of these expressions in a physical duoidal category are then observed to correspond to an identity-on-elements inclusion of the corresponding posets. 

This correspondence provides a graphical formalism to reason about the structure of a physical duoidal category, and in this chapter we analyze the posets that arise from this correspondence and prove that the identity-on-elements inclusions between them agree precisely with the physical duoidal structure maps between the analogous algebraic expressions.

\begin{definition}[Sum-join expressible poset]\label{def.expressible}
A finite poset is \emph{sum-join expressible} if it is either empty or constructible out of singleton posets using only joins and sums (i.e.\ disjoint unions).
\end{definition}

Sum-join expressible posets have been studied under many names: \emph{$N$-free posets} \cite{cameron1987some}, \emph{reticles} \cite{schmerl1980decidability}, and \emph{TSP digraphs} \cite{lawler1978sequencing}.

The definition of sum-join expressible posets is \emph{inductive} in nature, in the sense that it is of the following form.
\begin{itemize}
	\item The empty poset $\varnothing$ is sum-join expressible;
	\item any singleton poset $\{a\}$ is sum-join expressible;
	\item if posets $P,Q$ are sum-join expressible, so is $P \sqcup Q$; and
	\item if posets $P,Q$ are sum-join expressible, so is $P \Join Q$.
\end{itemize}
Thus certain posets are realized via an expression in the language of variables (representing the singleton poset on an element with the same name as the variable), $\yon$ (representing the empty poset $\varnothing$), and the binary operations $\otimes$ and $\tri$ (representing $\sqcup$ and $\Join$ respectively). So, for instance, the poset depicted in \eqref{eqn.expression} can be expressed as $a \;\tri\; (b \otimes (c \;\tri\; d))$.
\begin{equation}\label{eqn.expression}
\begin{tikzcd}[row sep=small]
& & d \\
b & & c \uar \\
& a \ar{ul} \ar{ur}
\end{tikzcd}
\end{equation}

\section{Concrete Characterization}

Not every finite poset is sum-join expressible; for example, consider the zig-zag poset depicted in \eqref{eqn.zigzag}.
\begin{equation}\label{eqn.zigzag}
N\coloneqq\fbox{
\begin{tikzcd}[ampersand replacement=\&]
b \& d \\
a \uar \& c \uar \ar{ul} 
\end{tikzcd}
}
\end{equation}
We refer to this poset as $N$; it is nonempty, not a sum of nonempty posets as it is connected, and also not a join of nonempty posets as there is no partition of its vertices for which there is an arrow from each vertex in the first to each vertex in the second. Therefore $N$ is not sum-join expressible. It is natural to ask, then: which finite posets are sum-join expressible and which are not?

The inductive form of \cref{def.expressible} is helpful for turning sum-join expressions into posets, but does not provide much guidance for how to distinguish sum-join expressible posets from arbitrary finite posets. A more helpful description in practice provides a concrete and (ideally) efficient strategy for checking whether a given poset is sum-join expressible without any prior information about how it was built. One way to do this is to identify a complete set of patterns that prevent a poset from being sum-join expressible, so that any poset without any such obstructions is always sum-join expressible.

To this end, we now show that there is a straightforward algorithm for detecting whether a given finite poset is sum-join expressible, which is constructive in the sense of providing a construction of the corresponding sum-join expression. As it happens, the \emph{non}-sum-join-expressible pattern $N$ from \eqref{eqn.zigzag} is the only obstruction to sum-join expressibility.

\begin{theorem}\label{expressibility}
A finite poset is sum-join expressible if and only if it has no full embedding of $N$.
\end{theorem}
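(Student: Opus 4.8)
The plan is to prove the interesting direction---that a finite poset $P$ with no full embedding of $Z$ is expressible---by strong induction on the cardinality of $P$, producing the expression constructively. The easy direction is already done: the discussion following \eqref{eqn.zigzag} shows $Z$ itself is neither a nonempty disjoint union nor a nonempty join, and since ``contains a full copy of $Z$'' is clearly preserved by the sub-poset appearing inside any $\sqcup$ or $\Join$ decomposition, no expressible poset can contain $Z$. (One should check that if $P = Q \sqcup R$ or $P = Q \Join R$ and $Z$ fully embeds in $Q$, then $Z$ fully embeds in $P$; this is immediate because full sub-posets of $Q$ are full sub-posets of $P$ in both cases.)

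For the induction: given nonempty $P$ with no full $Z$, if $P$ is disconnected then it is the disjoint union of its connected components, each of which is smaller, inherits the no-$Z$ condition (full sub-poset of a full sub-poset), and hence is expressible by induction; then $P$ is expressible as their $\otimes$. So assume $P$ is connected. The key step is to show that a connected finite poset with no full embedded $Z$ must decompose as a nontrivial join $P = Q \Join R$. Here is the idea I would pursue. Consider the relation on elements: call $x,y$ \emph{join-equivalent} if they have exactly the same strict up-sets and strict down-sets among $P\setminus\{x,y\}$, i.e.\ $x$ and $y$ relate identically to every other element. More usefully, I would try to find the decomposition directly: let $R$ be the set of elements $r$ such that $r$ is above \emph{every} element not equal to... no---rather, I would look for a partition $(Q,R)$ with $Q,R$ nonempty and every element of $Q$ below every element of $R$. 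Define $R = \{x \in P : x \text{ is comparable to and } \geq \text{ every element of some fixed maximal antichain's complement}\}$; this is getting complicated, so instead the cleanest route is: among all elements, consider a minimal element $a$ (exists since $P$ finite nonempty); if $P$ is connected and has no $Z$, I claim $a$ is below everything or $P$ splits. The honest approach is to take $Q$ = the union of connected components of $P$ after deleting some carefully chosen ``top block,'' so I would instead argue: let $R$ be the set of elements $x$ such that the down-set $\mathord{\downarrow}x$ omits no element below any minimal element---look, the rigorous version is to induct using the characterization that a connected poset fails to be a join iff one can find a ``zigzag path'' $a < b > c < d$ with $d \not> a$, which (after a combinatorial argument about shortest such paths) produces a full copy of $Z$.

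So the technical heart, and the main obstacle, is this lemma: \emph{a connected finite poset that is not a nontrivial join contains a full embedded copy of $Z$.} I would prove it by considering the ``comparability-complement'' graph $G$ on the vertices of $P$ whose edges are the incomparable pairs; $P$ is a nontrivial join exactly when $G$ is disconnected or, more precisely, when the vertex set partitions so that one side is entirely below the other---equivalently when $\bar G$ (the comparability graph viewed with orientation forgotten, restricted appropriately) ... the precise statement is that $P = Q\Join R$ nontrivially iff the incomparability graph $G$ is disconnected. Granting that reformulation, I want: if $P$ is connected as a poset but $G$ is \emph{connected} (so no join decomposition), extract a full $Z$. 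Take a shortest path in $G$ between two elements that are also comparable in $P$ (such a pair exists because $P$ is order-connected but $G$-connected forces incomparable pairs to coexist with comparable ones), and analyze the first few vertices $a_0 - a_1 - a_2$ of a minimal configuration; minimality of the path length forces $a_0, a_2$ comparable, and combined with one comparable pair this yields four elements whose induced order is exactly $Z$ (possibly after swapping the roles of $\leq$ and $\geq$, which is a symmetry of $Z$ since $Z$ is self-dual). Checking that the induced sub-poset is \emph{full} (no extra relations) is exactly where minimality of the chosen path is used. Once this lemma is in hand, the induction closes: connected no-$Z$ poset $\Rightarrow$ nontrivial join $Q \Join R$ $\Rightarrow$ $Q,R$ smaller and no-$Z$ $\Rightarrow$ both expressible $\Rightarrow$ $P = Q \tri R$ expressible. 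I expect the bookkeeping in the lemma---correctly handling the orientations and verifying fullness---to be the part requiring the most care.
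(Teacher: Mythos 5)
Your overall strategy matches the paper's: strong induction on the size of $P$, with the whole weight resting on the claim that a connected no-$Z$ poset (of size at least $2$, a hypothesis you omit---the singleton is connected, not a nontrivial join, and contains no $Z$) decomposes as a nontrivial join. The paper proves this directly by taking $P_\bot$ to be the elements strictly below every maximal element and showing $P = P_\bot \Join P_\top$, using the no-$Z$ hypothesis twice (once to show $P_\bot \neq \varnothing$ via a minimal connecting set below the maximal elements, once to show every element of $P_\bot$ lies below every element of $P_\top$). Your route through the incomparability graph $G$ is legitimate in principle: it is true that $P$ is a nontrivial join iff $G$ is disconnected, though you only ``grant'' this, and the nontrivial direction needs an argument that comparabilities between distinct $G$-components are uniform (if $x < y < x'$ with $x,x'$ in one component and $y$ in another, an incomparability path from $x$ to $x'$ inside the component forces a consecutive incomparable pair straddling $y$, a contradiction), after which the components are linearly ordered and the bottom one splits off as a join factor.

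The genuine gap is in your extraction of $Z$. Take a comparable pair at minimal $G$-distance $k$; shortest paths in $G$ are induced, so non-consecutive vertices on the path are comparable, and minimality over all comparable pairs forces $k \in \{2,3\}$ (if $k \ge 4$ then $a_0,a_2$ would be a comparable pair at smaller distance). When $k = 3$ the four vertices do form a full $Z$: the comparability graph on them is a path, and any acyclic orientation of a path inducing no extra comparabilities must alternate, giving $Z$ up to duality. But when $k = 2$ you obtain only three elements, $a_0 < a_2$ together with $a_1$ incomparable to both---a configuration that occurs inside expressible posets such as $a_1 \sqcup (a_0 \Join a_2)$, so no contradiction can come from these three elements alone. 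Your sketch says ``combined with one comparable pair this yields four elements whose induced order is exactly $Z$,'' but in this minimal case the comparable pair is $(a_0,a_2)$ itself and no fourth element has been produced; finding one requires invoking order-connectedness of $P$ and a further combinatorial argument, which is precisely the work done by the paper's analysis of $P_{connect}$ and the maximal elements. Until the $k=2$ case is handled, the key lemma---and hence the hard direction---is not proved. (Separately, your easy direction states the preservation claim backwards: what is needed is that any full embedding of $Z$ into $Q \sqcup R$ or $Q \Join R$ lands in a single factor, because $Z$ is connected and admits no partition with all cross-relations; that is the paper's argument and is easy to restore.)
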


Here a full embedding of $N$ in a poset $P$ is a monotone map $N \to P$ which is injective on elements and reflects order in addition to preserving it. In other words, it means $P$ contains four distinct elements whose partial order inherited from $P$ is isomorphic to $N$.

\cref{expressibility} explains the name $N$-free \cite{cameron1987some} for sum-join expressible posets.

\begin{proof}
We first show by induction that a sum-join expressible poset has no full embeddings of $N$. Empty or singleton posets cannot have a full embedding of $N$ as they have fewer than 4 elements, so it suffices to show that this property is preserved by sums and joins. If $P$ and $Q$ have no full embeddings of $N$, then as $N$ is connected any full embedding $N \to P \sqcup Q$ factors through either $P$ or $Q$, and hence cannot exist. Any full embedding $N \to P \Join Q$ which does not factor through $P$ or $Q$ must send some elements to $P$ and some to $Q$. But for the embedding to be full there must be an arrow in $N$ from each element sent to $P$ to each element sent to $Q$, and there is no partition of $N$ with this property.

Now assume $P$ is a finite poset with no full embedding of $N$; we want to show that $P$ is sum-join expressible. Using strong induction on the cardinality of $P$, to show $P$ is sum-join expressible it suffices to check that $P$ is either empty, singleton, a sum of nonempty posets, or a join of nonempty posets. If $P$ is empty, singleton, or not connected, we are done, so assume $P$ is connected with more than one element. Furthermore, if $P$ has only 2 or 3 elements, it is straightforward to check by enumeration that $P$ is, up to isomorphism, sum-join expressible by one of the sum-join expressions in \eqref{eqn.smallposets}.
\begin{equation}\label{eqn.smallposets}
a \otimes b \qquad a \;\tri\; b \qquad a \otimes b \otimes c \qquad a \otimes (b \;\tri\; c) \qquad a \;\tri\; (b \otimes c) \qquad (a \otimes b) \;\tri\; c \qquad a \;\tri\; b \;\tri\; c
\end{equation}

We can therefore assume that $P$ has at least 4 elements. Let $P_{max}$ denote the maximal elements of $P$, $P_\bot$ denote the full sub-poset of $P$ containing the elements which are strictly less than every element in $P_{max}$, and $P_\top$ denote the full sub-poset of $P$ on the complement of $P_\bot$. We show that $P_\bot$ and $P_\top$ are nonempty and $P = P_\bot \Join P_\top$, completing the proof. 

$P_\top$ is nonempty as it includes $P_{max}$ which is always nonempty for finite posets. If $|P_{max}| \le 2$ then as $P$ is connected the maximal elements must have a common predecessor, so $P_\bot$ is nonempty. If $|P_{max}| > 2$, note that as $P$ is connected there exists a minimal-sized (and therefore pairwise-incomparable) subset $P_{connect}$ of elements in $P \backslash P_{max}$ such that every pair of elements in $P_{max}$ is related by a zigzag of comparisons of the form $a < b$ with $a \in P_{connect}$ and $b \in P_{max}$. If $P_{connect}$ contains more than one element, then there must exist $b,d \in P_{max}$ and $a,c \in P_{connect}$ which form an embedded copy of $N$ in $P$: take $a,c$ to be any pair which precede overlapping subsets of $P_{max}$ (such a pair must exist as $P$ is connected), and let $b$ be a maximal element with $a < b > c$. If the maximal elements preceded by $c$ were all also preceded by $a$, $P_{connect}$ would not be minimal as $c$ could be removed, so there exists $d \in P_{max}$ incomparable with $a$ such that $c < d$. As $P$ is assumed to have no full embeddings of $N$, $P_{connect}$ must contain just a single element preceding all of $P_{max}$, so $P_\bot$ is nonempty.

To show that $P = P_\bot \Join P_\top$, it suffices to prove that for each $c \in P_\bot$ and $a \in P_\top$, we have $c < a$ in $P$. As $P$ is connected, $a$ must be less or equal to than some maximal element $b$. If $a=b$ then as $c \in P_\bot$ we have $c < a$ and we are done, so assume that $a < b$. As $a$ is not in $P_\bot$, there must be some maximal element $d$ such that $a$ and $d$ are incomparable. We then have $a < b$, $c < b$, $c < d$, and $d$ separated from $a$ and $b$ (as all maximal elements are separated). If we do not have $c < a$, then the elements $a,b,c,d$ inherit from $P$ precisely the partial order $N$ from \eqref{eqn.zigzag}. Therefore as $P$ contains no full embeddings of $N$, we must have $c < a$.  
\end{proof}

\section{Equivalence with duoidal structure maps}

To show that sum-join expressible posets and identity-on-objects inclusions between them correspond to free sum-join expressions and structure maps between them in any physical duoidal category $\cat{C}$, we show that for two fixed sum-join expressions, the identity-on-objects inclusions between the corresponding posets are in bijection with the structure maps between the corresponding objects in $\cat{C}$.  This implies both that each poset is represented by at most one sum-join expression, and the algebraic structure of physical duoidal categories can be encoded entirely in terms of sum-join expressible posets. The manner of this encoding using categorical operads is discussed in \cref{sec.operads}. 

\begin{definition}
By \emph{physical duoidal expression} we will denote a well-formed term in the language consisting of a nullary symbol $\yon$ and two binary symbols $\otimes,\tri$, in which each variable appears only once. Two physical duoidal expressions are \emph{equivalent} if they are related by some combination of associativity of $\otimes$ and $\tri$, unitality of $\yon$ with respect to $\otimes,\tri$, and symmetry of $\otimes$.
\end{definition}

The equivalence classes of physical duoidal expressions in $n$ fixed variables can be identified with the set $\Cat{O}_n$ of $n$-ary operations in the symmetric operad $\Cat{O}$ defined by the pushout
\[
\begin{tikzcd}
	\Cat{P}\ar[r, hook]\ar[d, hook]&\Cat{CM}\ar[d]\\
	\Cat{M}\ar[r]&\Cat{O}\ar[ul, phantom, very near start, "\ulcorner"]
\end{tikzcd}
\] 
Here $\Cat{P}$ is the operad for pointed sets, with just the identity and a single nullary operation, $\Cat{M}$ is the symmetric operad for monoids where $\Cat{M}_n$ has a single $n$-ary operation for each permutation of $n$ variables, and $\Cat{CM}$ is the symmetric operad for commutative monoids where $\Cat{CM}_n$ has a single $n$-ary operation invariant under permutation of the variables. The associativity of $\otimes$ and $\tri$ in physical duoidal expressions is encoded by $\Cat{CM}$ and $\Cat{M}$, and the shared unit is encoded by identifying the nullary operations in the two copies via $\Cat{P}$. 

Given a physical duoidal expression $p$ with $n$ variables and additional physical duoidal expressions $p_1,...,p_n$ with mutually distinct variables, there is a physical duoidal expression denoted $p \circ (p_1,...,p_n)$ obtained by substituting $p_i$ into the $i$th variable of $p$ for $i=1,...,n$. This is precisely the composition operation in the operad $\Cat{O}$.

\begin{definition}[Duoidal structure maps]
Morphisms between physical duoidal expressions with the same variables---which we call \emph{duoidal structure maps}---are inductively generated by the lax interchanger
\begin{equation}\label{eqn.interchanger1}
(a \;\tri\; b) \otimes (c \;\tri\; d) \to (a \otimes c) \;\tri\; (b \otimes d) 
\end{equation}
under the following operations:
\begin{itemize}
	\item Equivalence: any two equivalent physical duoidal expressions have morphisms between them in both directions. This in particular includes the identity morphism from any physical duoidal expression to itself
	\item Composition: given morphisms of physical duoidal expressions $p \to q$ and $q \to r$, there is a morphism $p \to r$;
	\item Products: given morphisms of physical duoidal expressions $p \to q$ and $p' \to q'$, there are morphisms $p \otimes p' \to q \otimes q'$ and $p \;\tri\; p' \to q \;\tri\; q'$;
	\item Substitution: given a morphism $p \to q$ between physical duoidal expressions with $n$ variables, and morphisms $p_1 \to q_1$, ..., $p_n \to q_n$, there is a morphism $p \circ (p_1,...,p_n) \to q \circ (q_1,...,q_n)$;
\end{itemize} 
along with equations ensuring that any two morphisms between the same fixed physical duoidal expressions are equal.
\end{definition}

We now proceed to show that these morphisms are precisely the same as the identity-on-elements inclusions between the corresponding sum-join expressible posets. To do so, we need a notion of substitution for posets as well.

\begin{definition}[Lexicographic substitution]\label{substitution}
Given a finite poset $P$ and for each $a \in P$ a poset $P_a$, the substitution poset denoted $P \circ (P_a)_{a \in P}$ has elements $\coprod_{a \in P} P_a$ such that $x \le y$ for $x \in P_a$ and $y \in P_b$ when either $a < b$ in $P$ or $a=b$ and $x \le y$ in $P_a$. This poset structure is called the \emph{lexicographic} order.
\end{definition}

Suppose $p$ is a physical duoidal expression with variables $a_1,\ldots,a_n$ and that it corresponds to the $n$-element poset $P$. It is straightforward to check that if physical duoidal expressions $p_1,...,p_n$ correspond to the posets $P_{a_1},...,P_{a_n}$, then $p \circ (p_1,...,p_n)$ corresponds to $P \circ (P_{a_1},...,P_{a_n})$.

\begin{theorem}\label{fullyfaithful}
For two fixed physical duoidal expressions $p,q$ in the same $n$ variables, there is a unique duoidal structure map from $p$ to $q$ if and only if there is an identity-on-elements inclusion from the poset $P$ expressed by $p$ to the poset $Q$ expressed by $q$. 
\end{theorem}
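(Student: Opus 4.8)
The plan is to prove both directions by inducting on $n$, exploiting the fact that, by \cref{expressibility}, expressible posets are exactly those with no full $Z$, together with the structural decomposition of such posets into disjoint unions and joins that was extracted in the proof of \cref{expressibility}. The statement has two halves: (1) existence of a duoidal structure map $p\to q$ whenever there is an identity-on-elements inclusion $P\hookrightarrow Q$, and (2) uniqueness, i.e.\ any two duoidal structure maps $p\to q$ are equal, which is promised by the defining equations but must be leveraged rather than reproven. The truly substantive content is (1), so I focus there; (2) I would handle by remarking that the poset $Q$ together with the inclusion determines the map, and that the coherence equations among duoidal structure maps are exactly what is needed to collapse any parallel pair — this is where I would cite that the operad-theoretic presentation of duoidal structure maps was set up (via the pushout $\Cat{O}$ and the inductive generation) precisely to make the category of expressions-and-structure-maps thin on each hom-set.

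For the existence direction, first I would establish the base cases: $n\le 3$, where \eqref{eqn.smallposets} enumerates all expressible posets, and one checks by hand that each identity-on-elements inclusion among them is realized by a composite of interchangers, comparitors \eqref{eqn.comparitor}, and equivalences. For the inductive step, given $P\hookrightarrow Q$ with $|P|=|Q|=n\ge 4$, I would use the decomposition from the proof of \cref{expressibility}: either $Q$ is disconnected, $Q=Q_1\sqcup Q_2$, or $Q=Q_\bot\Join Q_\top$ with both parts nonempty. The key observation is that the inclusion $P\hookrightarrow Q$ must be \emph{compatible} with whichever decomposition $Q$ admits: if $Q=Q_1\sqcup Q_2$ then $P$ splits as $P_1\sqcup P_2$ (possibly with $P_i$ further connected pieces grouped across the two sides — here I would argue that we may choose the grouping so that $P_i\hookrightarrow Q_i$), and if $Q=Q_\bot\Join Q_\top$ then setting $P_\bot=P\cap Q_\bot$ and $P_\top=P\cap Q_\top$ one gets $P\hookrightarrow P_\bot\Join P_\top\hookrightarrow Q_\bot\Join Q_\top$, where the first inclusion adds the cross-relations and the second is a join of smaller inclusions $P_\bot\hookrightarrow Q_\bot$, $P_\top\hookrightarrow Q_\top$. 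Applying the inductive hypothesis to the smaller pieces and closing under Products gives a structure map realizing $P_\bot\hookrightarrow Q_\bot$ joined with $P_\top\hookrightarrow Q_\top$; it then remains to realize the inclusion $P\hookrightarrow P_\bot\Join P_\top$, i.e.\ the map that adds \emph{all} cross-relations from $P_\bot$ to $P_\top$, as an iterated duoidal structure map. This last point reduces, via the decompositions of $P_\bot$ and $P_\top$ themselves and the Substitution operation, to the single generating comparitor $a\otimes b\to a\tri b$ together with the interchanger and the ``interleaving'' maps \eqref{eqn.interleave}; I would package this as a separate lemma: \emph{for any expressible posets $R,S$, the identity-on-elements inclusion $R\sqcup S\hookrightarrow R\Join S$ is a duoidal structure map}, proved by double induction on $|R|,|S|$ using \eqref{eqn.interleave} and \eqref{eqn.distinclusions}-style maps.

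The converse direction — every duoidal structure map induces an identity-on-elements inclusion of the corresponding posets — is comparatively routine: \cref{posetduoidal} shows all structure maps in $\poset$ are identity-on-objects, and one checks by induction on the generation of duoidal structure maps that the interchanger, equivalences, composites, products, and substitutions all send to identity-on-elements inclusions, using that lexicographic substitution \cref{substitution} of posets matches operadic substitution of expressions (already noted in the text). Combining the two directions with uniqueness yields the ``if and only if,'' and as a corollary each expressible poset is the image of at most one expression up to equivalence, since two expressions $p,q$ with $P\cong Q$ via the identity would have structure maps $p\to q$ and $q\to p$ whose composite, by uniqueness, is the identity.

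I expect the main obstacle to be the existence direction's reduction step — showing that an arbitrary identity-on-elements inclusion $P\hookrightarrow Q$ is always compatible with the canonical decomposition of $Q$, and in particular handling the disconnected case where the connected components of $P$ may need to be redistributed among the components of $Q$. The subtlety is that a single component of $P$ need not map into a single component of $Q$ only if $P$ has \emph{fewer} relations, which is exactly the situation, so components of $P$ can straddle components of $Q$; I would resolve this by first factoring $P\hookrightarrow Q$ through the poset $P'$ obtained from $P$ by adding only those relations of $Q$ lying within a single $Q$-component, reducing to the case where the component structures are aligned, and only then invoking the recursive decomposition. Getting this factorization bookkeeping right, and making sure the resulting expression-level maps compose coherently, is the delicate part; everything else is induction and appeals to the generating maps \eqref{eqn.duoidal}, \eqref{eqn.comparitor}, \eqref{eqn.interleave}.
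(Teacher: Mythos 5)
Your ``only if'' direction and your treatment of uniqueness match the paper (uniqueness is indeed built into the defining equations of duoidal structure maps, so there is nothing to prove there). The gap is in the key step of the ``if'' direction, in the case where $Q$ decomposes as a join. After factoring $P\hookrightarrow Q$ as $P\hookrightarrow P_\bot\Join P_\top\hookrightarrow Q_\bot\Join Q_\top$, you dispose of the second inclusion by induction, but the residual inclusion $P\hookrightarrow P_\bot\Join P_\top$ involves all $n$ elements, so your induction on $n$ gives no traction on it, and the lemma you propose to package---that $R\sqcup S\hookrightarrow R\Join S$ is a structure map---does not cover it: that lemma is just the comparitor \eqref{eqn.comparitor} with substitution, and it only applies when $P$ has \emph{no} cross-relations between the two parts. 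In general $P$ already contains some, but not all, of the relations from $P_\bot$ to $P_\top$ (e.g.\ $P$ given by $b\otimes(a\;\tri\;c)$ included into $(a\otimes b)\;\tri\;c$), so $P\neq P_\bot\sqcup P_\top$ and the residual problem is essentially the original theorem again, not a smaller instance. Handling it requires a further case analysis on the decomposition of $P$ \emph{itself}: if $P$ is a join one peels off a factor lying entirely in $P_\bot$ or $P_\top$, and if $P=P_1\sqcup P_2$ one needs the paper's four-corner move, intersecting the partition $P_1,P_2$ with the partition $Q_1,Q_2$ and factoring through the lax interchanger with the corner inclusions $P_{i,j}\hookrightarrow Q_{i,j}$ substituted in, as in \eqref{eqn.factorization}. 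That factorization---using the decompositions of $p$ and $q$ simultaneously rather than only the decomposition of $q$---is the missing idea; your sketch gestures at ``the interchanger and interleaving maps'' but never gives the reduction.

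A secondary point: the step you flag as the main obstacle is actually a non-issue, and your diagnosis of it is backwards. Since the identity function is order-preserving from $P$ to $Q$, the relation of $P$ is contained in that of $Q$, so every connected component of $P$ lies inside a single component of $Q$; components of $P$ cannot straddle components of $Q$ (a component of $Q$ may simply contain several components of $P$, which causes no difficulty). Hence when $Q=Q_1\sqcup Q_2$ you may just take $P_i$ to be the full sub-poset of $P$ on the elements of $Q_i$ and apply the Products operation---no re-grouping or factorization through an intermediate $P'$ is needed. The genuinely delicate case is the mixed one ($P$ a disjoint union, $Q$ a join) discussed above, which is exactly where the paper spends its effort.
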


\begin{proof}
For the ``only if'' direction, we first observe that any equivalence corresponds to the identity morphism of posets and the lax interchanger \eqref{eqn.interchanger1} corresponds to the inclusion of posets in \eqref{eqn.interchanger2}.
\begin{equation}\label{eqn.interchanger2}
\left(\begin{tikzcd}
b & d \\
a \uar & c \uar
\end{tikzcd}\right)\quad\emb\quad\left(\begin{tikzcd}
b & d \\
a \uar \ar{ur} & c \uar \ar{ul}
\end{tikzcd}\right)
\end{equation}
It then suffices to show that identity-on-elements inclusions of posets are preserved by $\sqcup$, $\Join$, composition, and substitution. The first three are straightforward because inclusions of posets are closed under composition and because $\sqcup,\Join$ are functors that extend $\sqcup$ on the underlying sets. Hence it remains only to check that substitution is functorial. 

Consider an identity-on-elements inclusion $P \to Q$ between sum-join expressible posets with elements $a_1,...,a_n$, and identity-on-elements inclusions $P_{a_1} \to Q_{a_1}$, ..., $P_{a_n} \to Q_{a_n}$. It suffices to show that the identity function is order-preserving from $P \circ (P_{a_1},...,P_{a_n})$ to $Q \circ (Q_{a_1},...,Q_{a_n})$. Let $x \le y$ in $P \circ (P_{a_1},...,P_{a_n})$ for $x \in P_{a_i}$ and $y \in P_{a_j}$. If $a_i < a_j$ in $P$ then $a_i < a_j$ in $Q$, while if $a_i=a_j$ and $x \le y$ in $P_{a_i}$ then $x \le y$ in $Q_{a_i}$, so either way $x \le y$ in $Q \circ (Q_{a_1},...,Q_{a_n})$.

For the ``if'' direction, we first observe as a base case that the identity morphisms on the empty and singleton posets correspond to the identity structure maps on $\yon$ and the single-variable expression.

We now proceed by strong induction, noting that if $P$ and $Q$ each have the same $n$ elements for $n > 1$ then each is either a sum or a join of smaller nonempty posets. We check in each of the resulting cases that if the identity function is a poset inclusion then it corresponds to a duoidal structure map from $p$ to $q$. 

If $Q$ is a sum of $Q_1$ and $Q_2$, and the identity function is a poset inclusion from $P$ to $Q$, then the elements of $Q_1$ and $Q_2$ must also be mutually disjoint in $P$ (which therefore cannot be a join). The identity inclusion is then the sum of two identity-on-elements inclusions on strictly smaller posets, which we inductively assume to correspond to duoidal structure maps. The identity inclusion from $P$ to $Q$ then corresponds to applying $\otimes$ to these two structure maps. 

If $P$ is a join of $P_1$ and $P_2$, and the identity is a poset inclusion to $Q$, then the elements of $P_1$ must also relate to all the elements of $P_2$ in $Q$ (which therefore cannot be a disjoint union). The identity inclusion is then the join of two identity-on-elements inclusions on strictly smaller posets, so we have similarly inductively exhibited this identity-on-elements-inclusion as corresponding to applying $\tri$ to two duoidal structure maps.

Finally, assume $P$ is a sum of $P_1$ and $P_2$ and that $Q$ is a join of $Q_1$ and $Q_2$, and that the identity is a poset inclusion from $P$ to $Q$. Consider the posets $P_{i,j}$ where $i$ and $j$ range over $1,2$ and $P_{i,j}$ is the full sub-poset of $P_i$ on the vertices which overlap with $Q_j$. Similarly define $Q_{i,j}$ as the full sub-poset of $Q_j$ on the vertices which overlap with $P_i$. By \cref{expressibility} $P_{i,j}$ and $Q_{i,j}$ are sum-join expressible: indeed, each is a full sub-poset of a poset with no full embeddings of $N$, so neither can have a full embedding of $N$. The identity inclusion from $P$ to $Q$ factors as in \eqref{eqn.factorization}.
\begin{equation}\label{eqn.factorization}
P = P_1 \sqcup P_2 \emb (P_{1,1} \Join P_{1,2}) \sqcup (P_{2,1} \Join P_{2,2}) \emb (Q_{1,1} \sqcup Q_{2,1}) \Join (Q_{1,2} \sqcup Q_{2,2}) \emb Q_1 \Join Q_2 = Q
\end{equation}
Indeed, the identity function is a poset inclusion from $P$ to $Q$, the restricted identity functions are poset inclusions from $P_{i,j}$ to $Q_{i,j}$, $P_i$ to $P_{i,1} \Join P_{i,2}$, and $Q_{1,j} \sqcup Q_{2,j}$ to $Q_j$. As each of these posets has fewer elements than $P$ and $Q$, we can inductively assume that each corresponds to a duoidal structure map.

This shows immediately that the first and third inclusions in \eqref{eqn.factorization} correspond to duoidal structure maps, so it suffices to show the same for the second inclusion. But this inclusion can be recovered by substituting
\[ P_{1,1} \emb Q_{1,1}, \qquad P_{1,2} \emb Q_{1,2}, \qquad P_{2,1} \emb Q_{2,1}, \qquad P_{2,2} \emb Q_{2,2} \]
respectively into the elements $a,b,c,d$ in the lax interchanger \eqref{eqn.interchanger1}, which completes the proof as we have thus exhibited $P \emb Q$ as a composite of inclusions which correspond to duoidal structure maps.
\end{proof}

We have now constructed a faithful functor from the category of physical duoidal expressions and morphisms between them to the category of posets. This means that the category of physical duoidal expressions is equivalent to the image of that functor, namely the category of sum-join expressible posets and identity-on-elements inclusions between them. Another way of phrasing this is that sum-join expressible posets and identity-on-elements inclusions are a free physical duoidal category, and we next explore an operadic formalism for representing this.

\section{Formalization using categorical operads}\label{sec.operads}

Just as certain algebraic structures on sets can be described using operads---sequences of sets $O_n$ for $n \in \nn$ of $n$-ary operations equipped with identity, composite, and sometimes symmetry operations---so too can algebraic structures on categories be described using categorical operads, where the sets $O_n$ are replaced by categories $\O_n$ and the identity and composition functions replaced by functors.\footnote{Categorical operads are also sometimes known as $\smcat$-enriched operads.} The category structure on the algebraic operations permits the encoding of coherence maps between different operations, such as the lax interchanger between two different 4-ary operations. We say that $\cat{O}$ is the categorical operad for some 2-category $D$ if $D$ is equivalent to the 2-category of pseudo-algebras for $\cat{O}$.

While most popular algebraic structures on categories are indeed modeled by categorical operads, these operads are often defined in the same way as the algebraic structures themselves: for instance, the categorical operad describing duoidal categories is generated under operadic unit and composition by the binary and nullary operations $\otimes,\tri,\yon_\otimes,\yon_\tri$ and the morphisms between them from \eqref{eqn.duoidal}, with equations guaranteeing any two morphisms between the same operations are equal.

This ``presentation'' style of definition is of course common across mathematics and technically sound, but it does not readily provide a description of the categories $\O_n$ which concretely represent the compound operations and coherences a categorical algebraic structure contains. It is therefore helpful for the theory of a particular algebraic structure when its corresponding operad can be represented concretely in terms of familiar mathematical objects. 

\begin{remark}
For instance, in homotopy theory there is a notion of a homotopy-coherent monoid (or $A_\infty$-algebra) which satisfies the unit and associativity equations up to homotopies, which satisfy further coherence axioms up to higher homotopies, and so on. There are well-understood patterns to these coherences represented in terms of associahedra polytopes, but in practice it is difficult to work with a higher operad defined only by these generators and their relations. Instead, concrete models such as the equivalent ``little intervals'' operad provide a more practical alternative in terms of distinct but well-understood mathematical objects.
\end{remark}

For physical duoidal categories, this concrete model is achieved using sum-join expressible posets. We now describe a concrete categorical operad whose pseudo-algebras are precisely physical duoidal categories, using posets.

\begin{definition}[Categorical symmetric operad of finite posets]
Define the categorical symmetric operad of \emph{finite posets}, denoted $\FP$, as follows:
\begin{itemize}
	\item its category $\FP_n$ of $n$-ary operations is the category of poset structures on the set $\ul n$ and identity-on-elements inclusions between them (in fact $\FP_n$ is itself a poset);
	\item its unit functor $\eta\colon 1 \to \FP_1$ is an isomorphism, since there is only one poset structure on $\ul 1$;
	\item its composition functor 
\[
\mu\colon \FP_n \times \FP_{m_1} \times \cdots \times \FP_{m_n} \to \FP_{m_1 + \cdots + m_n}
\]
sends $(P,P_1,...,P_n)$ to the \emph{lexicographic} poset structure on 
\[
\ul{m_1 + \cdots  + m_n} \cong \{(1,1),...,(1,m_1),...,(n,1),...,(n,m_n)\}
\]
i.e.\ the one for which $(i,j) \le (i',j')$ when either $i < i'$ in $P$ or when $i=i'$ and $j \le j'$ in $P_i$ (this agrees with the substitution operation in \cref{substitution});
	\item for $\tau \in \Sigma_n$ a permutation on $\ul n$, the symmetry isomorphism $\sigma_\tau\colon \FP_n \To{\cong} \FP_n$ is given by applying $\tau$ to the set $\ul n$ underlying the posets in $\FP_n$.
\qedhere
\end{itemize}
\end{definition}

%
In $\FP$, sum-join expressible posets are closed under identities and operadic composition. This is because the identity is the one element poset, which is sum-join expressible, and each sum-join expressible poset is a composition of joins and sums. The sum of posets $P$ and $Q$ can be realized as the operadic composite of the two element discrete poset with $(P,Q)$, while the join of $P$ and $Q$ is the operadic composite of the poset $1 < 2$ with $(P,Q)$. Therefore, the composite of any sum-join expressible poset $P$ on $\ul n$ with posets $(P_1,...,P_n)$ is given by an $n$-ary expression of sums (disjoint unions) and/or joins applied to $P_1,...,P_n$, which of course preserves sum-join expressibility.

\begin{definition}
We denote by $\Expr$ the full sub-operad of $\FP$ consisting of the sum-join expressible posets.
\end{definition}

Before stating the main theorem, we first recall the definition of a pseudo-algebra for a categorical operad.

\begin{definition}[Pseudoalgebra]
For a categorical symmetric operad $\O$, an $\O$-pseudoalgebra is a category $\C$ equipped with structure maps
\[
\boxtimes_n\colon \O_n \times \C^n \to \C
\]
and natural isomorphisms as in \eqref{eqn.algebra}
\begin{equation}\label{eqn.algebra}
\begin{tikzcd}[column sep=100]
\O_n \times \O_{m_1} \times \cdots \times \O_{m_n} \times \C^{m_1} \times \cdots \times \C^{m_n} \rar{\O_n \times \boxtimes_{m_1} \times \cdots \times \boxtimes_{m_n}} \dar[swap]{\mu \times \C^{m_1+\cdots+m_n}} & \O_n \times \C^n \dar{\boxtimes_n} \\
\O_{m_1+\cdots+m_n} \times \C^{m_1+\cdots+m_n} \rar[swap]{\boxtimes_{m_1+\cdots+m_n}} \ar[phantom,shift right=1]{ur}{\cong} & \C
\end{tikzcd}
\end{equation}
\begin{equation*}\begin{tikzcd}[column sep=large]
\C \rar{\eta \times \C} \ar[equals]{dr} & \O_1 \times \C \dar{\boxtimes_1} \dar[phantom, shift right=6, near start]{\cong} \\
& \C
\end{tikzcd}\qquad\qquad\qquad\begin{tikzcd}[column sep=60]
\O_n \times \C^n \rar{\O_n \times \C^\tau} \dar[swap]{\sigma_\tau \times \C^n} & \O_n \times \C^n \dar{\boxtimes_n} \\
\O_n \times \C^n \rar[swap]{\boxtimes_n} \ar[phantom]{ur}{\cong} & \C
\end{tikzcd}
\end{equation*}
satisfying unit, associativity, and equivariance equations. 
\end{definition}

For a pseudoalgebra $\cat{C}$ and an $n$-element poset $P\in\FP_n$, we will let $\boxtimes_n^P\colon \C^n \to \C$ denote the partial application $\boxtimes_n^P\coloneqq\boxtimes_n(P,-)$. For example, given a poset such as
\[
\begin{tikzcd}
b & d \\
a \uar & c \uar \ar{ul} 
\end{tikzcd}
\]
we get a functor $\cat{C}^4\to\cat{C}$. A pseudoalgebra $\C$ also has natural morphisms between these operations for all maps between the relevant posets, and coherence isomorphisms for iterated, null, and permuted applications of these operations. 

\begin{theorem}\label{expr_operad}
Pseudoalgebras for $\Expr$ are precisely the physical duoidal categories.
\end{theorem}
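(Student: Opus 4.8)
The strategy is to recognize $\Expr$ as (a skeleton of) the categorical symmetric operad $\Cat{PD}$ that presents physical duoidal categories, and then appeal to the standard fact that equivalent categorical operads have equivalent $2$-categories of pseudoalgebras. Here $\Cat{PD}$ is the operad generated, under operadic unit and composition, by two binary operations $\otimes$ and $\tri$, a nullary operation $\yon$, and the single morphism $\eqref{eqn.interchanger1}$, subject to the relations asserting that $(\yon,\otimes)$ is a symmetric monoidal and $(\yon,\tri)$ a monoidal operation sharing the unit $\yon$, together with the duoidal coherence equations of \eqref{eqn.duoidal}; unwinding definitions in the style discussed in \cref{sec.operads}, a $\Cat{PD}$-pseudoalgebra is exactly a physical duoidal category (\cref{def.physical}). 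So it suffices to construct an equivalence of categorical symmetric operads $\Phi\colon\Cat{PD}\to\Expr$.

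I would define $\Phi$ on generators by sending $\otimes$ to the two-element discrete poset on $\ul2$ (which represents $\sqcup$), $\tri$ to the poset $1<2$ on $\ul2$ (which represents $\Join$), $\yon$ to the empty poset, and the interchanger $\eqref{eqn.interchanger1}$ to the identity-on-elements inclusion displayed in $\eqref{eqn.interchanger2}$. To check that this respects the defining relations of $\Cat{PD}$, note first that the object-level relations --- associativity and unitality of $\sqcup$ and $\Join$, and symmetry of $\sqcup$ --- hold because operadic composition in $\FP$, hence in $\Expr$, is lexicographic substitution (\cref{substitution}), for which $\varnothing$ is a two-sided unit and for which the discrete poset is permutation-invariant. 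Every morphism-level coherence relation then holds automatically, since each $\Expr_n$ is a poset and so has no two distinct parallel morphisms.

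Next I would check that each $\Phi_n\colon\Cat{PD}_n\to\Expr_n$ is an equivalence of categories. Essential surjectivity: by \cref{def.expressible}, every object of $\Expr_n$ is an expressible poset on $\ul n$, built from singletons by iterated $\sqcup$ and $\Join$, hence is a relabelling of an operadic composite of $\Phi(\otimes),\Phi(\tri),\Phi(\yon)$ --- equivalently, it is the poset expressed by some physical duoidal expression. Fullness and faithfulness are exactly \cref{fullyfaithful}: an expression $p$ (with its variables identified with $\ul n$ in order) maps to the poset $P$ it expresses, this assignment carries substitution of expressions to lexicographic substitution --- as recorded immediately after \cref{substitution}, so $\Phi$ preserves operadic composition --- and \cref{fullyfaithful} shows that $\Cat{PD}_n(p,q)$ and $\Expr_n(P,Q)$ are nonempty for precisely the same pairs, each being at most a singleton. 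Hence $\Phi$ is an equivalence of categorical symmetric operads, and transporting pseudoalgebras along it identifies physical duoidal categories with $\Expr$-pseudoalgebras.

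The only step requiring genuine care --- though no genuine difficulty --- is pinning down the operad $\Cat{PD}$ and verifying that its pseudoalgebras are the physical duoidal categories: one must spell out the full list of coherence equations (the two monoidal structures, the symmetry of $\otimes$, and the three duoidal axioms of \eqref{eqn.duoidal}) and check that a $\Cat{PD}$-pseudoalgebra structure on $\C$ unpacks to precisely this. I expect this to go through smoothly: transporting a $\Expr$-pseudoalgebra back along $\Phi$, the associator, unitors, braiding, and lax interchanger all appear either as pseudoalgebra coherence isomorphisms or as images under $\boxtimes$ of morphisms of posets in $\Expr_n$, and each axiom they must satisfy is an equation between $2$-cells built from exactly this data, which holds because the pseudoalgebra axioms already constrain the composite structure isomorphisms and because the $\Expr_n$ have no nontrivial parallel morphisms. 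If one prefers to avoid naming $\Cat{PD}$, the same argument can be run directly: define $\boxtimes_n$ by interpreting physical duoidal expressions in $\C$ (well-defined up to the unique structure maps of \cref{fullyfaithful}), take the pseudoalgebra coherence data from the monoidal coherences of $\C$, and check the pseudoalgebra axioms via coherence in $\C$; conversely extract $\otimes,\tri,\yon$ from $\boxtimes_2$ on the discrete poset and on $1<2$ and $\boxtimes_0$ on $\varnothing$, extract the interchanger from $\eqref{eqn.interchanger2}$, and derive the physical duoidal axioms from the pseudoalgebra axioms together with the poset-ness of the $\Expr_n$.
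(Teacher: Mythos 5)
Your argument is correct, and its mathematical core is the same as the paper's: everything is powered by \cref{fullyfaithful}, the physical duoidal data is read off from $\boxtimes_0$ on $\varnothing$ and $\boxtimes_2$ on the two posets on $\ul 2$, and all coherence conditions are discharged by the fact that each $\Expr_n$ is a poset. The difference is organizational. You factor the proof through an explicitly presented categorical operad $\Cat{PD}$ for physical duoidal categories and then invoke two general principles: that a presented operad's pseudoalgebras are the structures it presents, and that a levelwise equivalence of categorical operads induces a biequivalence of pseudoalgebra $2$-categories. The paper never names $\Cat{PD}$; it constructs both assignments by hand --- extracting $\yon,\otimes,\tri$ from an $\Expr$-pseudoalgebra (using that $\Cat{CM}$- and $\Cat{M}$-pseudoalgebras are symmetric monoidal and monoidal categories, the same presentation principle you use, just applied piecemeal), and conversely choosing a representative expression $p$ for each expressible poset $P$ and setting $\boxtimes_n^P$ to be the interpretation of $p$ --- and then checks the two constructions are mutually inverse up to isomorphism. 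Note that the paper's choice of representative expressions is exactly the choice of a pseudo-inverse to your $\Phi_n$, so your ``transport along the equivalence'' step and the paper's second direction are the same move in different clothing, and your closing remark about running the argument directly is literally the paper's proof. What your packaging buys is a clean separation of the combinatorics (the operad equivalence $\Cat{PD}\simeq\Expr$, which is \cref{fullyfaithful} plus the compatibility of substitution with lexicographic substitution recorded after \cref{substitution}) from the $2$-categorical bookkeeping; what it costs is that the step you flag as needing care --- unpacking $\Cat{PD}$-pseudoalgebras as physical duoidal categories in the sense of \cref{def.physical}, including the coherence subtlety that pseudoalgebras of a strictly presented operad yield the weak structures --- is precisely the routine-but-nontrivial verification that the paper's direct unwinding also performs (and partially elides), so neither route avoids it.
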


\begin{proof}
To show that an $\Expr$-pseudoalgebra $\C$ forms a physical duoidal category, we define the unit $\yon\in\cat{C}$ to be $\boxtimes_0^\varnothing \colon 1 \to \C$, and we define $\otimes,\tri$ by $\boxtimes_2^{(1 \;\; 2)}, \boxtimes_2^{(1 < 2)} \colon \C^2 \to \C$ respectively. Restricting $\Expr$ to the discrete full sub-operads generated by $(1 \;\; 2)$ and $(1 < 2)$, namely the operads $\Cat{CM}$ and $\Cat{M}$ for commutative monoids and monoids, shows that $(\yon,\otimes)$ and $(\yon,\tri)$ form a symmetric monoidal and monoidal structure on $\C$ as these are precisely the pseudoalgebras for $\Cat{CM}$ and $\Cat{M}$. 

The category $\Expr_n$ is equivalent by \cref{fullyfaithful} to the category of physical duoidal expressions on $n$ fixed variables, which ensures that all of the physical duoidal structure maps are present in $\C$ between the $n$-ary functors $\boxtimes_n^P$. But these agree with the appropriate composites of $\yon,\otimes,\tri$ up to coherent natural isomorphism by the pseudoalgebra structure, and so $\C$ forms a physical duoidal category.

Conversely, given a physical duoidal category structure on $\C$, for a fixed choice of variables $a_1,...,a_n$ choose a representative $p$ of each equivalence class of physical duoidal expressions on those $n$ variables. Then for $P$ the corresponding sum-join expressible poset, define $\boxtimes_n^P \colon \C^n \to \C$ by the formula $p$. The choice of $p$ for each sum-join expressible poset $P$ amounts to a choice of quasi-inverse functor to the equivalence of categories from physical duoidal expressions on $n$ variables to $\Expr_n$, so these assignments are functorial in $P$. The coherence isomorphisms in \eqref{eqn.algebra} are then uniquely derived from the coherences of the normal duoidal structure.

It is straightforward to check that these constructions are inverse to one another up to isomorphism, completing the proof.
\end{proof}

Equivalently, this shows that the category $\coprod\limits_n \Expr_n$ with symmetries added in, namely the category of finite sum-join expressible posets and bijective-on-elements maps of posets, is the free (weakly symmetric, strictly unital and associative) physical duoidal category generated by one object.

\chapter{Dependence Categories}\label{chap.dependence}

Having shown in \cref{expr_operad} that the operad for physical duoidal categories is a full sub-operad of $\FP$, it is natural to wonder what the pseudoalgebras for $\FP$ look like. These are categories with, in addition to a physical duoidal structure, operations $\boxtimes_n^P \colon \C^n \to \C$ for all posets $P$ on $\ul n$. 

\section{Definition and first examples}

\begin{definition}
A \emph{dependence category} is an $\FP$-pseudoalgebra.
\end{definition}

Intuitively, this allows for objects to be juxtaposed according to more complicated causal structures than simply spatially and temporally. Many of our examples of physical duoidal structures extend naturally to dependence structures.

\begin{example}\label{posetdependence}
Definitionally, the category $\coprod\limits_n \FP_n$ with symmetries added in, namely the category of finite posets and bijective-on-elements maps of posets, forms an $\FP$-pseudoalgebra. But as the composition product $P \circ (P_1,...,P_n)$ of posets is in fact functorial in $P_1,...,P_n$ with respect to all maps of posets, this dependence structure extends to the entire category of posets.
\end{example}

\begin{example}
$\mink$ is not a dependence subcategory of posets, as for a finite poset $P$ that does not embed into Minkowski space $\boxtimes_n^P(\cdot,...,\cdot) = P$ is not in $\mink$ even though the singleton poset is. However, posets embeddable in Minkowski space and identity-on-elements inclusions between them form a full sub-operad of $\FP$ containing $\Expr$. 

To see this, consider such posets $P,P_1,...,P_n$. Any embedding into Minkowski space is scalar-invariant, and can be perturbed so that every point in the embedding dependent on $p$ is inside the light cone of $p$ rather than on its boundary. To embed $P \circ (P_1,...,P_n)$ then, take such a perturbed embedding of $P$ and replace each point $p_i$ with an embedding of $P_i$ centered at $p_i$ and scaled down to be so small as to be included in the intersection of all forward and backward light cones that $p_i$ belongs to. Each $P_i$ is then embedded in such a way that preserved its own dependencies and also inherits those of $i \in P$. 

The same construction can be described similarly using sphere arrangements, where the down-scaling of embeddings in $\rr^4$ corresponds to enlarging the spheres in an arrangement representing $P_i$ so much that their joint interior closely resembles the interior of a single sphere, where this now-thin arrangement replaces the $i$th sphere in the representation of $P$.
\end{example}

\begin{example}\label{tropicaldependence}
As we will see in \cref{sec.dep_phys}, most of our examples of dependence categories can be derived from a physical duoidal structure using limits. However, $\rrnn$ carries a dependence structure which we can define directly. The physical duoidal fragment agrees with \cref{tropical}, where $\otimes$ is given by $\max$ and $\tri$ is given by $+$, and the more general operations are again motivated by the runtimes of parallel programs. 

Given $n$ programs, a poset $P$ on $\ul n$ can be interpreted as describing dependencies between the programs, where $i < j$ in $P$ if the $j$th program requires the output of the $i$th program before it can begin running. Given runtimes $a_1,...,a_n \in \rrnn$ of these programs, we define $\boxtimes_n^P(a_1,...,a_n)$ as the minimal amount of time it would take to run these programs given unlimited parallel computing resources, e.g.\ with access to $n$ machines, such that whenever $i < j$ in $P$, the $i$th program is completed before the $j$th program begins to run. Define
\[ \boxtimes_n^P(a_1,...,a_n) = \max_{i_1<\cdots<i_k} \; \sum_{\ell = 1}^k \; a_{i_\ell}. \]

This is the minimal possible runtime of such an arrangement of programs, as for any increasing sequence $i_1<\cdots<i_k$ in $P$, the $i_1$th program must complete running before the $i_2$th program begins and so on, and this runtime is achievable by first running in parallel the programs associated to all minimal elements of $P$ and then beginning each subsequent program as soon as all of its prerequisites in $P$ have completed. This is related to the \emph{critical path method} of event scheduling, where the sequence $i_1 < \cdots < i_k$ with the longest total runtime is called the critical path (see \cite{criticalpath}).

To see that this is a dependence category, first observe that for any identity-on-elements inclusion $P \to Q$, $\boxtimes_n^P(a_1,...,a_n) \le \boxtimes_n^Q(a_1,...,a_n)$ as any increasing sequence in $P$ is also increasing in $Q$ and $\max$ is monotone with respect to inclusions on indexing sets. This assignment respects units as $\boxtimes_1^\cdot(a) = a$, and respects composites by the distributivity of $\sum$ over $\max$. In particular, given posets $P,P_1,...,P_n$ on $\ul n,\ul m_1,...,\ul m_n$ and $a_{1,1},...,a_{n,m_n} \in \rrnn$, any increasing sequence through $P \circ (P_1,...,P_n)$ with the longest total runtime is given by finding the corresponding sequence for $P$ and plugging into each of its elements $i$ the analogous sequences for $P_i$. 
\end{example}

\begin{example}
Any symmetric monoidal category has a dependence structure where for fixed $n$, each operation $\boxtimes_n^P$ is given by the $n$-ary tensor product. All of the necessary structure maps are then isomorphisms. Indeed, they are given by identities or compositions of symmetry, associativity, and unit isomorphisms.
\end{example}

\section{Dependence Categories from Physical Duoidal Categories}\label{sec.dep_phys}

In most of our examples of dependence categories, such as \cref{posetdependence}, we can begin with a physical duoidal category and then observe that the operation associated to an arbitrary poset $P \in \FP_n$ can be extracted from the $n$-ary $\tri$ operation as the ``subobject of tuples in which only the dependencies in $P$ are allowed.'' We can encode this process more formally as a procedure for recovering the dependencies of any $P$ from other posets which are sum-join expressible.

\begin{example}
Recall the poset $N$ from \eqref{eqn.zigzag}.
\[ 
N = \left(\begin{tikzcd}
b & d \\
a \uar & c \uar \ar{ul} 
\end{tikzcd}\right)
\]
While $N$ is not sum-join expressible, it is the pullback (namely, intersection) of the cospan of sum-join expressible posets in \eqref{eqn.intersect}.
\begin{equation}\label{eqn.intersect}
\left(\begin{tikzcd}
b & d \\
a \uar \ar{ur} & c \uar \ar{ul} 
\end{tikzcd}\right)\quad\hookrightarrow\quad\left(\begin{tikzcd}
b & d \\
a \uar \ar{ur} & c \lar \uar \ar{ul} 
\end{tikzcd}\right)\quad\hookleftarrow\quad\left(\begin{tikzcd}
b & d \\
a \uar & c \lar \uar \ar{ul} 
\end{tikzcd}\right)
\end{equation}
These posets are expressed respectively by the physical duoidal expressions
\[ \; (a \otimes c) \;\tri\; (b \otimes d) \;\;\;\quad\qquad c \;\tri\; a \;\tri\; (b \otimes d) \qquad\quad\;\;\; c \;\tri\; ((a \;\tri\; b) \otimes d). \]
Furthermore, while this cospan suffices to recover $N$ by intersection, it also shows that $N$ is the intersection of all sum-join expressible poset structures on $a,b,c,d$ containing $N$: every ordered pair of elements in $N$ which are not related in $N$ are also not related in some sum-join expressible poset containing $N$. 
\end{example}

This example illustrates a more general strategy: even a non-sum-join-expressible poset $P$ in $\FP_n$ arises as the limit of the sum-join expressible posets containing it, by exhibiting $P$ as their intersection.

\begin{lemma}\label{lem.expr_connected}
For any poset $P$ on $\ul n$, the category $P/\expr$ is connected.
\end{lemma}

\begin{proof}
We first claim that if $P/\expr$ is the union of full subcategories of the form $Q(D)/\expr$ for a connected fully faithful functor $Q \colon \mathcal{D} \to P/\FP_n$ such that each $Q(D)/\expr$ is connected, then $P/\expr$ is connected. To see this, observe that for $Q(D) \hookrightarrow Q(D')$ in $P/\FP_n$ there is a full subcategory inclusion $$Q(D)/\expr \hookleftarrow Q(D')/\expr.$$ Therefore if $Q(D)/\expr$ is connected and there is a cospan $$Q(D'') \hookleftarrow Q(D) \hookrightarrow Q(D'),$$ any object in $Q(D'')/\expr$ has a zigzag in $Q(D)/\expr$ (and hence in $P/\expr$) to any object in $Q(D')/\expr$. The same then applies for any $Q(D'')$ and $Q(D')$ related by a zigzag in $P/\FP_n$, and by assumption any two objects in $P/\expr$ belong respectively to some subcategories $Q(D'')/\expr$ and $Q(D')/\expr$ related by such a zigzag.

We also note that the category $N/\Cat{Expr}_4$ is connected, having the form in \eqref{eqn.nexpr}. 
\begin{equation}\label{eqn.nexpr}
\begin{tikzcd}[row sep=small,column sep=-35]
&[45] &[37] (a \otimes (c \;\tri\; d)) \;\tri\; b \ar{dr} \ar{ddl} & & a \;\tri\; c \;\tri\; d \;\tri\; b \\
& & & (a \otimes c) \;\tri\; d \;\tri\; b \ar{dl} \ar{ur} & & a \;\tri\; c \;\tri\; (b \otimes d) \ar{dr} \ar{ul} \\
c \;\tri\; d \;\tri\; a \;\tri\; b & c \;\tri\; (a \otimes d) \;\tri\; b \lar \rar & c \;\tri\; a \;\tri\; d \;\tri\; b & & (a \otimes c) \;\tri\; (b \otimes d) \ar{dr} \ar{dl} \ar{ur} \ar{ul} & & a \;\tri\; c \;\tri\; b \;\tri\; d \\
& & & c \;\tri\; a \;\tri\; (b \otimes d) \ar{dr} \ar{ul} & & (a \otimes c) \;\tri\; b \;\tri\; d \ar{dl} \ar{ur} \\
& & c \;\tri\; ((a \;\tri\; b) \otimes d) \ar{ur} \ar{uul} & & c \;\tri\; a \;\tri\; b \;\tri\; d
\end{tikzcd}
\end{equation}
This can be checked by considering every poset $P$ on the vertices $a,b,c,d$ containing the relations in $N$, observing that every such $P\supsetneq N$ is sum-join expressible and that these $P$'s are precisely the posets in \eqref{eqn.nexpr}.

We can now proceed by strong induction on $n-h(P)$, where $h(P)$ is the height of the poset $P$.\footnote{Recall that the height is the natural number $h$ such that the largest possible ordinal with an injection into $P$ has the form $0 < \cdots < h$.} In the base case of $n-h(P)=0$, we have that $P$ is of height $n$ and has $n$-many vertices, so it is a linear order. Hence, $P/\expr$ is the terminal category, which is connected. Now assume the result holds for posets of height at least $h(P)=H+1$ and assume $h(P)=H$.

If $P$ is expressible, we are done. Otherwise, there is some fully embedded copy of $N$ in $P$; we will denote the vertices of its image in $P$ by $a,b,c,d$. For any poset in $P/\expr$, the full sub-poset on the vertices $a,b,c,d$ must have one of the forms in \eqref{eqn.nexpr}. Therefore $P/\expr$ is the union of the full subcategories $Q(D)/\expr$ where $Q \colon N/\Cat{Expr}_4 \to P/\FP_n$ and $Q(D)$ is the poset on $\ul n$ generated by the union of the relations in $P$ and those on $a,b,c,d$ coming from the poset $D$. As $N/\Cat{Expr}_4$ is connected, it suffices to show that each $Q(D)/\expr$ is connected.

Iterating the argument above with $Q(D)$ in place of $P$ results in a tree of posets on $\ul n$ with root $P$ such that for each node $P'$, if $P'$ is expressible or of height greater than $H$ it is a leaf, and otherwise its children are the posets $Q'(D)$ for some functor $Q' \colon N/\Cat{Expr}_4 \to P'/\FP_n$ of the form described for $P$ above. Since $Q'(D)$ has strictly more relations than $P'$, the height of a poset on $\ul n$ is non-decreasing with respect to the number of relations in the poset, and a poset on $\ul n$ with the maximal number of relations also has maximal height ($n$), this tree must be finite as every path down from the root will eventually reach either an expressible poset or a poset of height greater than $H$. 

Our result holds for each leaf in this tree either by the inductive hypothesis or the initiality of an expressible poset $P'$ in $P'/\expr$, and by the argument above if the result holds for every child of a node it holds for the node as well. Therefore the result must hold for the root of the tree, $P$, completing the inductive step of the proof.
\end{proof}

\begin{remark}
Curiously, the limit in $\FP_4$ of the diagram in \eqref{eqn.nexpr} is also $N$, despite it appearing to be a very different diagram from the cospan in \eqref{eqn.intersect}. One way to see this is by noting that limits in $\FP_n$ are intersections of posets, and at least one of the posets $(a \otimes c) \;\tri\; (b \otimes d)$ and $c \;\tri\; ((a \;\tri\; b) \otimes d)$ includes into every poset in \eqref{eqn.nexpr} except for $(a \otimes (c \;\tri\; d)) \;\tri\; b$, which is itself the intersection of other posets in the diagram. This ensures that every pair of vertices which is incomparable in some poset in \eqref{eqn.nexpr} is also incomparable in the limit of \eqref{eqn.intersect}, hence their agreement.
\end{remark}

\begin{proposition}\label{expressible_limit}
$\FP_n$ is generated under connected limits by its full subcategory of sum-join expressible posets.
\end{proposition}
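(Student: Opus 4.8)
The plan is to show directly that every object of $\FP_n$ arises as a connected limit of a diagram landing in expressible posets. Fix a poset structure $P$ on $\ul n$ and let $\mathcal{E}_P$ denote the full subposet of $\FP_n$ consisting of those expressible posets $Q$ whose order relation contains that of $P$; view $\mathcal{E}_P$ as a diagram via the inclusion $\mathcal{E}_P \hookrightarrow \FP_n$. I would prove that (i) the limit of this diagram is $P$, and (ii) $\mathcal{E}_P$ is a connected category. Since limits in the poset $\FP_n$ are just intersections of order relations, and since $\mathcal{E}_P$ is nonempty (any linear extension of $P$ is expressible and lies in $\mathcal{E}_P$), this gives the result; note that $\FP_n$ being a poset means that closure under connected limits at the level of objects automatically gives closure on morphisms.

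For (i), the limit is $\bigcap_{Q \in \mathcal{E}_P} Q$, which visibly contains $P$, so it suffices to see that every ordered pair $(k,\ell)$ with $k \ne \ell$ and $k \not\le_P \ell$ is omitted by some $Q \in \mathcal{E}_P$. Pairs with $\ell <_P k$ are omitted by every $Q$ by antisymmetry, and for a pair incomparable in $P$ one picks a linear extension $L$ of $P$ with $\ell <_L k$; this $L$ is expressible (it is $a_1 \tri \cdots \tri a_n$ for a suitable ordering) and lies in $\mathcal{E}_P$. Equivalently: the subfamily of linear extensions of $P$ already has intersection $P$ (Szpilrajn), hence so does all of $\mathcal{E}_P$.

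For (ii), given $Q \in \mathcal{E}_P$, I would first choose any linear extension $L$ of $Q$; then $Q \subseteq L$ and $L \supseteq P$, so there is a morphism $Q \to L$ in $\mathcal{E}_P$, and it remains to connect any two linear extensions of $P$ inside $\mathcal{E}_P$. For this I would invoke the standard fact that the linear-extension graph of $P$ is connected: any two linear extensions of $P$ are joined by a chain $L = L_0, L_1, \dots, L_m = L'$ in which $L_t$ and $L_{t+1}$ differ only by transposing a pair of elements adjacent in $L_t$ and incomparable in $P$. For each such step, $L_t \cap L_{t+1}$ is obtained from $L_t$ by deleting exactly the one relation between those two adjacent elements, so it is a linear order with a single adjacent incomparable pair, hence expressible (of the form $\cdots \tri (x \otimes y) \tri \cdots$), and it still contains $P$ because the deleted relation was between two $P$-incomparable elements. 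Thus $L_t \leftarrow L_t \cap L_{t+1} \rightarrow L_{t+1}$ is a zig-zag in $\mathcal{E}_P$, and concatenating these with $Q \to L$ connects $Q$ to $L'$.

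The main obstacle is step (ii). The subcategory $\mathcal{E}_P$ typically has neither an initial nor a terminal object: its maximal elements are the (generally many) linear extensions of $P$, and once $P$ is non-expressible these admit no common upper bound in $\FP_n$ at all, as one already sees for $Z$. So connectivity must be argued by hand, and the cleanest route I see is via connectivity of the linear-extension graph together with the observation that the "merge" posets $L_t \cap L_{t+1}$ remain simultaneously expressible and above $P$. Step (i), and the reduction in (ii) to linear extensions, are routine, as is checking that the diagram produced is genuinely connected; I do not expect to need the $Z$-characterization of \cref{expressibility} here, only the trivial expressibility of linear orders and of linear orders with one adjacent incomparable pair.
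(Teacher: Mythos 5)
Your proof is correct, and it follows the same global strategy as the paper: realize $P$ as the limit (intersection) of the connected diagram of all expressible poset structures on $\ul n$ containing $P$, i.e.\ your $\mathcal{E}_P$ is exactly the paper's $P/\Expr_n$. The differences are in how the two key points are discharged. For the claim that the intersection is exactly $P$, the paper builds, for each $P$-incomparable pair $i,j$, a single expressible poset of the form $\{1<\cdots<i{-}1\}\;\tri\;(Q_i\sqcup Q'_j)\;\tri\;\{j{+}1<\cdots<n\}$ containing $P$ in which $i$ and $j$ are genuinely incomparable; you instead invoke Szpilrajn and use a linear extension reversing the pair, which kills one ordered relation at a time and is simpler (linear orders being trivially expressible). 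Both are valid; the paper's construction is heavier than needed for the intersection claim. On the other hand, you explicitly prove connectivity of the indexing category via the standard adjacent-transposition connectivity of the linear-extension graph, checking that each intermediate meet $L_t\cap L_{t+1}$ is an expressible poset (a chain with one incomparable adjacent pair) still containing $P$; the paper's proof does not spell this out, even though connectivity of $P/\Expr_n$ is precisely what is cited later in the proof of \cref{dependenceextension}, so your argument supplies a step the paper leaves implicit. Your handling of morphisms (automatic since $\FP_n$ is a poset) is a legitimate shortcut compared with the paper's precomposition functor $Q/\Expr_n\to P/\Expr_n$, and your observation that $\mathcal{E}_P$ generally has no terminal object correctly explains why a connectivity argument by hand is unavoidable.
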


In other words, every poset structure in $\FP_n$ is a connected limit of sum-join expressible ones, and every inclusion is induced by morphisms of connected diagrams of sum-join expressible poset structures.

\begin{proof}
We first show that each $P\in\FP_n$ is the limit of the inclusion functor from $P/\expr$ to $\FP_n$, where $P/\expr$ is the full subcategory of the undercategory $P/\FP_n$ spanned by the sum-join expressible poset structures, with the evident inclusion into $\FP_n$. This will complete the result on objects, by Lemma~\ref{lem.expr_connected}.

As a meet-complete poset, $\FP_n$ has limits computed as intersections of order relations. As in the undercategory of $P$ every poset on $\ul n$ contains all of the inequalities from $P$, it suffices to show that for any elements $i,j$ incomparable in $P$, there exists a poset in $P/\expr$ within which $i,j$ are incomparable. To do this, we use the fact that any poset $P$ with elements $\ul n$ includes into a linear order on $\ul n$.\footnote{To see this, define the linear order inductively by choosing any minimal element of $P$ as the first element in the linear order, then repeating the process for the remaining elements in $P$ until all are in order.}

Fix a linear order $R$ in $P/\expr$ which we will denote as if it were the canonical order on $\ul n$, let $i,j$ be incomparable in $P$, and assume $i <_R j$. For all $k$ with $i <_R k <_R j$, we have at most one of $i <_P k$ or $k <_P j$. Let $Q_i$ be the linear order inherited from $R$ on the elements $i$ and all $k$ such that $i <_R k <_R j$ and $i <_P k$, and let $Q'_j$ be the linear order inherited from $R$ on the elements $j$ and all $k$ such that $i <_R k <_R j$ but not $i <_P k$. Then 
\[
\left\{1 < \cdots < i{-}1\right\} \;\tri\; \left(Q_i \sqcup Q'_j\right) \;\tri\; \left\{j{+}1 < \cdots < n\right\}
\]
belongs to $P/\expr$ with $i,j$ incomparable. Indeed, this is sum-join expressible as a join of sums of linear orders and $i$ and $j$ are incomparable as they are separated by a sum. All of the inequalities in $P$ are included as it is linear below $i$ and above $j$, while by the discussion above all of $Q_i$ is incomparable with all of $Q'_j$ in $P$. This completes the proof that $P$ is the limit of this diagram.

To conclude, note that for inclusions $P \to Q$ in $\FP_n$, precomposition induces a functor $Q/\expr \to P/\expr$ which commutes over $\FP_n$, so the limit structure on $Q$ together with the cone of $P$ over $Q/\expr$ induce the unique morphism $P \to Q$ in $\FP_n$.

\end{proof}

We now describe how this construction of arbitrary posets from sum-join expressible ones allows us to extend many physical duoidal categories to spacetime categories. In short, when a physical duoidal category has connected limits which are preserved by $\otimes$ and $\tri$, it extends to a dependence category by defining $\boxtimes_n^P$ as a limit of physical duoidal operations.

Recall that for a categorical operad $\O$, a pseudoalgebra for $\O$ is the same as a strong functor $\O \to \End(\C)$ for some category $\cat{C}$. Here $\End(\C)$ is the categorical operad with $\End(\C)_n = \Funn(\C^n,\C)$, unit and composition given by identity and composite functors, and a strong functor of categorical operads $\O \to \P$ consists of functors $\O_n \to \P_n$ which commute with unit and composition up to coherent natural isomorphism. By \cref{expr_operad}, extending a physical duoidal category to a dependence category amounts to finding an extension of the form in \eqref{eqn.duoidallift}.
\begin{equation}\label{eqn.duoidallift}
\begin{tikzcd}
\Expr \rar \dar[hook] & \End(\C) \\ \FP \ar[dashed]{ur}
\end{tikzcd}
\end{equation}

\begin{theorem}\label{dependenceextension}
If $\C$ is a physical duoidal category with finite connected limits which are preserved by $\otimes$ and $\tri$, then $\C$ admits the structure of a dependence category with 
\[\boxtimes_n^P := \left(\lim_{Q \in P/\Expr_n}\boxtimes_n^Q\right) \colon \C^n \to \C.\]
\end{theorem}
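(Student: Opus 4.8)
The plan is to verify that the formula $\boxtimes_n^P := \lim_{Q \in P/\Expr_n} \boxtimes_n^Q$ assembles into a strong functor $\FP \to \End(\C)$ extending the given strong functor $\Expr \to \End(\C)$ (which exists by \cref{expr_operad}), using \cref{expressible_limit} to control the combinatorics. First I would make the definition functorial in $P$: an identity-on-elements inclusion $P \to P'$ in $\FP_n$ induces, by precomposition, a functor $P'/\Expr_n \to P/\Expr_n$ commuting over $\Expr_n$ (exactly as in the last paragraph of the proof of \cref{expressible_limit}), hence a canonical map between the two limits; this gives a functor $\FP_n \to \Funn(\C^n,\C) = \End(\C)_n$ for each $n$, agreeing with the original on $\Expr_n$ because expressible $P$ have a terminal object $\id_P$ in $P/\Expr_n$. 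The limits in question are \emph{connected} (indeed $P/\Expr_n$ is nonempty—$P$ embeds in a linear order—and filtered-like enough; more precisely one checks it is connected, which is what \cref{expressible_limit} already implicitly uses) and \emph{finite}, since there are only finitely many poset structures on $\ul n$; so the hypothesis that $\C$ has finite connected limits makes each $\boxtimes_n^P$ well-defined as a functor $\C^n \to \C$.

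Next I would produce the three coherence isomorphisms required of a strong functor of categorical operads—compatibility with operadic composition $\mu$, with the unit $\eta$, and with the symmetries $\sigma_\tau$. The unit and symmetry cases are immediate: $\FP_1$ and $\Expr_1$ coincide (one poset on $\ul 1$), and applying $\tau \in \Sigma_n$ to $\ul n$ carries $P/\Expr_n$ isomorphically to $(\tau \cdot P)/\Expr_n$, so the limits match up strictly. The substantive case is operadic composition: given $P \in \FP_n$ and $Q_i \in \FP_{m_i}$, I must exhibit a natural isomorphism
\[
\boxtimes_{m_1+\cdots+m_n}^{P\circ(Q_1,\dots,Q_n)} \;\cong\; \boxtimes_n^P\bigl(\boxtimes_{m_1}^{Q_1}(-),\dots,\boxtimes_{m_n}^{Q_n}(-)\bigr).
\]
The strategy is to reduce to the expressible case, where the isomorphism is part of the $\Expr$-pseudoalgebra structure on $\C$, by rewriting both sides as limits over expressible diagrams and using that $\otimes$ and $\tri$ preserve connected limits. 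Concretely, the lexicographic composite $R\circ(S_1,\dots,S_n)$ of expressible posets is again expressible, so there is a functor from $\prod_i(Q_i/\Expr_{m_i}) \times (P/\Expr_n)$ into $(P\circ(Q_i))/\Expr_{m_1+\cdots+m_n}$; the key combinatorial claim is that this functor (or rather the induced diagram) is \emph{cofinal}, i.e.\ every expressible poset containing $P\circ(Q_i)$ receives a map from one of the form $R\circ(S_i)$ with $R \supseteq P$ expressible and $S_i \supseteq Q_i$ expressible. Granting cofinality, the right-hand side, expanded as $\lim_{R\in P/\Expr}\boxtimes_n^R(\lim_{S_1}\boxtimes^{S_1},\dots,\lim_{S_n}\boxtimes^{S_n})$, can be pulled through each $\boxtimes_n^R$ (a composite of $\otimes$ and $\tri$, which preserve connected limits by hypothesis, and these indexing categories are connected) to become $\lim_{R,S_\bullet}\boxtimes^{R\circ(S_\bullet)}$, which by cofinality equals $\lim_{T\in (P\circ Q)/\Expr}\boxtimes^T$, the left-hand side.

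The main obstacle is precisely this cofinality statement for operadic composition. Proving it amounts to a sharper version of the argument in \cref{expressible_limit}: given an arbitrary expressible $T \supseteq P\circ(Q_1,\dots,Q_n)$ on $\ul{m_1+\cdots+m_n}$, I need to find expressible $R \supseteq P$ on $\ul n$ and expressible $S_i \supseteq Q_i$ on $\ul{m_i}$ with $R\circ(S_i) \subseteq T$ — that is, $T$ must refine back to something of lexicographic form, and one uses that $T$ can itself be refined to a linear order and that linear orders \emph{are} lexicographic composites of linear orders, together with the incomparability-witnessing construction from the proof of \cref{expressible_limit} applied block-by-block to keep the $P$-incomparabilities and $Q_i$-incomparabilities separately visible. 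Once cofinality is in hand, verifying the pseudoalgebra coherence equations (associativity, unit, equivariance) is a routine diagram chase: each reduces, via the limit formula and the naturality of the isomorphisms constructed above, to the corresponding equation in the $\Expr$-pseudoalgebra structure on $\C$, which holds by \cref{expr_operad}. I would also note in passing the subtlety that \eqref{eqn.duoidallift} only asks for the dashed functor to extend $\Expr \to \End(\C)$ up to isomorphism, so strictness issues in the choice of limits are harmless.
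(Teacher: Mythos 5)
Your overall strategy is the same as the paper's: define $\boxtimes_n^P$ as $\lim_{Q\in P/\Expr_n}\boxtimes_n^Q$, get functoriality by precomposition, handle the unit and symmetries trivially, and reduce compatibility with operadic composition to a limit-interchange (using that $\otimes$ and $\tri$ preserve connected limits, plus the $\Expr$-pseudoalgebra isomorphisms) together with an initiality claim for the composition functor
$P/\Expr_n\times P_1/\Expr_{m_1}\times\cdots\times P_n/\Expr_{m_n}\to \bigl(P\circ(P_1,\dots,P_n)\bigr)/\Expr_m$.
That claim is exactly the paper's \cref{overequivalence}, and you correctly identify it as the crux. The gap is that you leave it essentially unproven, and the sketch you offer would not deliver it.

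Two specific problems. First, initiality (what you call cofinality) requires, for every expressible $T\supseteq P\circ(P_1,\dots,P_n)$, that the comma category of composite-form posets $R\circ(S_1,\dots,S_n)$ with $P\circ(P_1,\dots,P_n)\subseteq R\circ(S_1,\dots,S_n)\subseteq T$ be nonempty \emph{and connected}; your gloss records only nonemptiness. Connectedness is not automatic: you cannot simply take meets inside this comma category, because intersections of expressible posets need not be expressible (that is the entire point of $Z$ and \cref{expressible_limit}). Second, your proposed route to nonemptiness points the wrong way: a linear refinement of $T$ \emph{contains} $T$, whereas you need a composite-form poset \emph{contained in} $T$ (a linear order can only sit inside $T$ if $T$ is already that linear order), and a linear order on $\ul m$ is of lexicographic form $R\circ(S_1,\dots,S_n)$ only when the blocks happen to be intervals. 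The paper's argument instead takes $S_i$ to be the restriction of $T$ to the $i$-th block and $R$ the poset on $\ul n$ with $i\le j$ exactly when every element of block $i$ lies below every element of block $j$ in $T$; one checks $R\circ(S_1,\dots,S_n)$ lies between $P\circ(P_1,\dots,P_n)$ and $T$ and is in fact a \emph{terminal} object of the comma category, which yields both nonemptiness and connectedness at once. With that lemma supplied, your limit-interchange chain goes through as you describe.
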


\begin{proof}
First note that the category $P/\Expr_n$ is connected by \cref{expressible_limit}, $\End(\C)_n$ has connected limits computed componentwise, and by \cref{expr_operad} $\boxtimes_n^Q$ is defined on $\C$ for each sum-join expressible poset $Q$ and functorially on the morphisms between them. To prove that this assignment $\FP \to \End(\C)$ provides a strong functor of categorical operads as in \eqref{eqn.duoidallift}, we first observe that it agrees with the physical duoidal structure on sum-join expressible posets including the unit, $Q/\Expr_n$ having a terminal object when $Q$ is sum-join expressible, so it remains only to show that it respects operadic composition.

We need to show that for $P \in \FP_n, P_1 \in \FP_{m_1},...,P_n \in \FP_{m_n}$ and $m=m_1 + \cdots + m_n$,
\begin{equation}\label{eqn.limitcompose}
\left(\lim_{Q \in P/\Expr_n} \boxtimes_n^Q\right) \circ \left(\lim_{Q_1 \in P_1/\Expr_{m_1}} \boxtimes_{m_1}^{Q_1},...,\lim_{Q_n \in P_n/\Expr_{m_n}} \boxtimes_{m_n}^{Q_n}\right) \cong \lim_{Q' \in P\circ(P_1,...,P_n)/\Expr_m} \boxtimes_m^{Q'}.
\end{equation}

By \cref{overequivalence} below, composition lifts to an initial functor
\[
P/\Expr_n \times P_1/\Expr_{m_1} \times \cdots \times P_n/\Expr_{m_n} \cong P\circ(P_1,...,P_n)/\Expr_m,
\]
and as $\otimes$ and $\tri$ commute with connected limits, so does $\boxtimes_n^Q$ for any sum-join expressible poset $Q$. We therefore have
\[\left(\lim_{Q \in P/\Expr_n} \boxtimes_n^Q\right) \circ \left(\lim_{Q_1 \in P_1/\Expr_{m_1}} \boxtimes_{m_1}^{Q_1},...,\lim_{Q_n \in P_n/\Expr_{m_n}} \boxtimes_{m_n}^{Q_n}\right)\]
\[\cong \left(\lim_{Q \in P/\Expr_n} \boxtimes_n^Q\right) \circ \lim_{(Q_1,...,Q_n) \in P_1/\Expr_{m_1} \times \cdots \times P_n/\Expr_{m_n}} \left(\boxtimes_{m_1}^{Q_1},...,\boxtimes_{m_n}^{Q_n}\right)\]
\[\cong \lim_{Q \in P/\Expr_n} \left(\boxtimes_n^Q \circ \lim_{(Q_1,...,Q_n) \in P_1/\Expr_{m_1} \times \cdots \times P_n/\Expr_{m_n}} \left(\boxtimes_{m_1}^{Q_1},...,\boxtimes_{m_n}^{Q_n}\right)\right)\]
\[\cong \lim_{Q \in P/\Expr_n} \;\;\lim_{(Q_1,...,Q_n) \in P_1/\Expr_{m_1} \times \cdots \times P_n/\Expr_{m_n}} \boxtimes_n^Q \circ \left(\boxtimes_{m_1}^{Q_1},...,\boxtimes_{m_n}^{Q_n}\right)\]
\[\cong \lim_{(Q,Q_1,...,Q_n) \in P/\Expr_n \times P_1/\Expr_{m_1} \times \cdots \times P_n/\Expr_{m_n}} \boxtimes_n^Q \circ \left(\boxtimes_{m_1}^{Q_1},...,\boxtimes_{m_n}^{Q_n}\right)\]
\[\cong \lim_{Q' \in P \circ (P_1,...,P_n)/\Expr_m} \boxtimes_n^{Q'},\]
where the final isomorphism comes from the initiality in \cref{overequivalence}, meaning that precomposition with this functor does not change the limit of any diagram.
\end{proof}

\begin{lemma}\label{overequivalence}
For $P,P_1,...,P_n$ as in the proof of \cref{dependenceextension}, there is an initial functor 
\[
P/\Expr_n \times P_1/\Expr_{m_1} \times \cdots \times P_n/\Expr_{m_n} \cong P\circ(P_1,...,P_n)/\Expr_m
\]
given by operadic composition.
\end{lemma}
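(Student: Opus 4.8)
The functor in question --- call it $\Phi$ --- is operadic composition: it sends a tuple $(Q,Q_1,\ldots,Q_n)$ to the lexicographic composite $Q\circ(Q_1,\ldots,Q_n)$. The plan is first to check $\Phi$ is a well-defined functor into $P\circ(P_1,\ldots,P_n)/\Expr_m$, and then to prove it initial by exhibiting a terminal object in each of its comma fibres. For the first point: the composition functor $\mu$ of $\FP$ restricts to $\Expr$ (expressible posets are closed under operadic composition, as noted when $\Expr$ was introduced), so $Q\circ(Q_1,\ldots,Q_n)$ is expressible; it contains $P\circ(P_1,\ldots,P_n)$ whenever $Q\supseteq P$ and each $Q_i\supseteq P_i$; and functoriality of $\mu$ makes the assignment monotone. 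Since every category here is a poset, initiality of $\Phi$ amounts to the following: for each expressible $Q'$ on $\ul m$ with $Q'\supseteq P\circ(P_1,\ldots,P_n)$, the componentwise-ordered poset of tuples $(Q,Q_1,\ldots,Q_n)$ --- with $Q\in\Expr_n$, $Q\supseteq P$, each $Q_i\in\Expr_{m_i}$, $Q_i\supseteq P_i$ --- satisfying $Q\circ(Q_1,\ldots,Q_n)\le Q'$ is nonempty and connected.

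The key computation is a componentwise criterion for the constraint $Q\circ(Q_1,\ldots,Q_n)\le Q'$. Writing $B_1,\ldots,B_n\subseteq\ul m$ for the blocks (the images of the sets $\{i\}\times\ul{m_i}$), $Q'_i$ for the full subposet of $Q'$ on $B_i$, and $Q'_0$ for the poset on $\ul n$ in which $i<j$ iff every element of $B_i$ precedes every element of $B_j$ in $Q'$, I would check directly from the definition of the lexicographic composite that $Q\circ(Q_1,\ldots,Q_n)\le Q'$ holds if and only if $Q\le Q'_0$ and $Q_i\le Q'_i$ for every $i$. Granting this, the comma fibre over $Q'$ is a product of intervals of the posets $\Expr_n,\Expr_{m_1},\ldots,\Expr_{m_n}$, and to produce a terminal element it suffices that the top candidate $(Q'_0,Q'_1,\ldots,Q'_n)$ be a legitimate object. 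Now each $Q'_i$ contains $P_i$ and, being a full subposet of the expressible poset $Q'$, contains no full copy of $Z$, hence is expressible by \cref{expressibility}; and $Q'_0$ contains $P$ because $Q'$ contains $P\circ(P_1,\ldots,P_n)$ and applying the same construction to $P\circ(P_1,\ldots,P_n)$ returns $P$ itself.

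The one remaining point, and the step I expect to be the main obstacle, is that $Q'_0$ is itself expressible. By \cref{expressibility} it suffices to show that a full embedding of $Z$ into $Q'_0$ would force one into $Q'$. The plan is a representative-chasing argument: from four blocks $B_a,B_b,B_c,B_d$ realizing the pattern $Z$ in $Q'_0$ (so all of $B_a$ and all of $B_c$ lie below all of $B_b$ in $Q'$, all of $B_c$ lies below all of $B_d$, and for each of the three remaining block-pairs both ``all below'' relations fail), one selects a single element of each block so that the order induced on the four chosen elements by $Q'$ is exactly $Z$. Transitivity together with the ``all-or-nothing'' meaning of the relations of $Q'_0$ supplies the three required comparabilities at once, while the comparabilities that must be \emph{ruled out} are controlled by combining the witnessed failures of the ``all below'' relations with the hypothesis that $Q'$ itself contains no full $Z$. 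Once this is done, $(Q'_0,Q'_1,\ldots,Q'_n)$ is terminal in the fibre, so the fibre is nonempty and connected, $\Phi$ is initial, and this initiality is what the symbol ``$\cong$'' in the statement abbreviates.
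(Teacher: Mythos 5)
Your proposal follows the paper's own route almost step for step: the same componentwise criterion for $Q\circ(Q_1,\ldots,Q_n)\le Q'$, and the same candidate terminal object $(Q'_0,Q'_1,\ldots,Q'_n)$ built from the block restrictions and the block-dominance order. The step you flag as the main obstacle---that $Q'_0$ is expressible---is, however, not merely hard: it is false. Take $m=5$, $n=4$, blocks $B_1=\{1,2\}$, $B_2=\{3\}$, $B_3=\{4\}$, $B_4=\{5\}$, and let $Q'$ be the expressible poset $(1\otimes 4)\;\tri\;((2\;\tri\;3)\otimes 5)$, so that $1,4$ lie below $2,3,5$, $2<3$, and the pairs $\{1,4\}$, $\{2,5\}$, $\{3,5\}$ are incomparable. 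In the dominance order one gets $B_1<B_2$ (both $1<3$ and $2<3$), $B_3<B_2$, $B_3<B_4$, while $B_1,B_3$ (since $1\not\sim 4$), $B_1,B_4$ (since $2\not<5$ and $5\not<1$) and $B_2,B_4$ (since $3\not\sim 5$) are incomparable; hence $Q'_0\cong Z$, which by \cref{expressibility} is not expressible. Your representative-chasing argument cannot be completed here, and the example shows why: the only freedom is the choice of $1$ or $2$ in $B_1$, and $1<5$ while $4<2$, so no choice realizes the three required incomparabilities---this is precisely how $Q'$ avoids a full $Z$ while its block quotient acquires one.

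To be clear, this is not a defect of your write-up alone: the paper's proof asserts without comment that $R'\circ(R_1,\ldots,R_n)$ lies in the relevant comma category, which equally requires the dominance poset $R'$ to be expressible, so you have faithfully reconstructed the argument and correctly isolated its weak point. But the terminal-object strategy cannot be repaired, because the comma categories can fail in the strongest way: with the same $Q'$, take $P:=Z$ on $\ul 4$ and $P_1$ the two-element antichain (singletons elsewhere). Then $P\circ(P_1,\ldots,P_4)\subseteq Q'$, so $Q'$ is an object of the codomain, but an object of the fibre over $Q'$ would require an expressible $Q$ with $Z\subseteq Q\subseteq Q'_0=Z$, i.e.\ $Q=Z$, which is impossible; the fibre is empty, so the composition functor is not initial for this choice of $P,P_i$. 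So the statement itself (not just your proof of it) breaks for non-expressible $P$, and either the lemma must be reformulated or the limit comparison in \cref{dependenceextension} argued by other means; as written, neither your sketch nor the paper's argument closes this gap.
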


\begin{proof}
We first observe that composition defines a functor, namely that for sum-join expressible $Q,Q_1,...,Q_n$ containing $P,P_1,...,P_n$ respectively, $Q \circ (Q_1,...,Q_n)$ contains $P \circ (P_1,...,P_n)$ essentially by definition of $\circ$. 
The composition functors $\FP_n \times \FP_{m_1} \times \cdots \times \FP_{m_n} \to \FP_m$ are easily checked to be fully faithful, which implies the same for $\Expr$ and this functor between undercategories.

To show that this functor is initial, we must then demonstrate that for every sum-join expressible poset $R$ on $\ul m$ containing $P \circ (P_1,...,P_n)$, the category $P \circ (P_1,...,P_n)/\Expr_m/R$ of sum-join expressible posets of the form $Q \circ (Q_1,...,Q_n)$ containing $P \circ (P_1,...,P_n)$ and contained in $R$ is connected. 

To do this, we show that it in fact has a terminal object. Let $R_1,...,R_n$ be the restrictions of $Q'$ to the elements of $P_1,...,P_n$ respectively, and let $R'$ be the poset on $\ul n$ in which $i \le j$ if either $i=j$ or for every $x \in R_i$ and $y \in R_j$ we have $x \le y$ in $R$. The composite $R' \circ (R_1,...,R_n)$ is in $P \circ (P_1,...,P_n)/\Expr_m/Q'$, as each $R_i$ must contain $P_i$ by definition (since $R$ contains $P \circ (P_1,...,P_n)$), if $i < j$ in $P$ then for every $x \in P_i$ and $y \in P_j$ we have $x \le y$ in $P \circ (P_1,...,P_n)$ and hence in $R$, and all of the inequalities in $R' \circ (R_1,...,R_n)$ are inherited from inequalities in $R$. 

If $Q \circ (Q_1,...,Q_n)$ is in $P \circ (P_1,...,P_n)/\Expr_m/R$, then $Q_i$ must include into $R_i$ by definition of $R_i$. Likewise $Q$ must include into $R'$, as if for $i \neq j$ there is any $x \in Q_i$ and $y \in Q_j$ such that $x \le y$ does not hold in $R' \circ (R_1,...,R_n)$, there must be some $x' \in Q_i$ and $y' \in Q_j$ such that $x' \le y'$ does not hold in $R$, so as $Q \circ (Q_1,...,Q_n)$ includes into $R$ it must be the case that $i \le j$ does not hold in $Q$. As there is then no relation in $Q \circ (Q_1,...,Q_n)$ that does not hold in $R' \circ (R_1,...,R_n)$, we have that the former includes into the latter, which is therefore terminal in $P \circ (P_1,...,P_n)/\Expr_m/R$.
\end{proof}

\begin{example}[Polynomial functors]\label{polydependence}
$\poly$ has all limits by \cite[Corollary 2.1.8]{aggregation}, and by analogous arguments to the proofs of \cite[Lemma 2.1.9, Proposition 2.1.13]{aggregation}, finite connected limits are preserved by $\otimes$ and $\tri$. Therefore by \cref{dependenceextension} $\poly$ is a dependence category. For $P$ a poset on $\ul n$ and $\ell_1 < \cdots < \ell_n$ a linear order on $\ul n$ containing $P$, the operation $\boxtimes_n^P$ takes the polynomials $p_1,...,p_n$ to the cartesian subfunctor of 
\[ p_{\ell_1} \;\tri\; \cdots \;\tri\; p_{\ell_n} = \sum_{I_1 \in p_{\ell_1}(1)} \; \prod_{i_1 \in p_{\ell_1}[I_1]} \; \cdots \; \sum_{I_n \in p_{\ell_n}(1)} \; \prod_{i_n \in p_{\ell_n}[I_n]} \; \yon \]
consisting of tuples $(I_1,...,I_n)$ in which for each $j < k$, $I_k$ is independent of $i_j$ unless $\ell_j < \ell_k$ in $P$. 
\end{example}

\section{Cocartesian Dependence Categories}\label{sec.cocart}

As discussed in \cref{cocartmonoidal}, any monoidal category with finite coproducts forms a duoidal category, and if the monoidal unit is initial, a physical duoidal category. Furthermore, if coproducts commute with finite connected limits (as they so often do), by \cref{dependenceextension} we have a dependence category. 

\begin{example}
In the category $\smset_\ast$ of pointed sets, the initial object is also terminal, so coproducts and products provide a physical duoidal structure. Coproducts (given by wedge sums) commute with connected limits, so $\smset_\ast$ forms a dependence category. For a poset $P$ on $\ul n$ and pointed sets $X_1,...,X_n$, the pointed set $\boxtimes_n^P(X_1,...,X_n)$ is the subset of their product containing tuples $(x_1,...,x_n)$ such that unless $i < j$ in $P$, at least one of $x_i$ and $x_j$ is the basepoint. 
\end{example}

This is also the case for the dependence category of posets in \cref{posetdependence}, where the monoidal product $\tri$ is given by the join of posets. Analogous notions of join in other categories admit the same structure, and in fact posets belong to a hierarchy of dependence subcategory inclusions.

\begin{example}\label{categoriesdependence}
For categories $\C$ and $\D$, their \emph{join}, $\C \Join \D$, has objects $\ob(\C) \sqcup \ob(\D)$ and morphisms all those in $\C$ and $\D$, along with for each $c$ in $\C$ and $d$ in $\D$ a unique morphism $c \to d$. This is equivalently the collage of the terminal profunctor from $\C$ to $\D$. The join of categories is easily checked to be functorial and associative, have the empty category as a unit, and preserve finite connected limits.

In the resulting dependence category structure on $\smcat$, the functor $\boxtimes_n^P$ sends $n$ small categories $\C_i$ to the category built from $\coprod_i \C_i$ by adding in unique morphisms from every object in $\C_i$ to every object in $\C_j$ when $i < j$ in $P$. When the categories $\C_i$ are posets, this is precisely the action of $\boxtimes_n^P$ on posets. Posets then form a full dependence subcategory of $\smcat$.
\end{example}

\begin{example}\label{ssetsdependence}
The category of simplicial sets, namely functors $\Delta\op \to \smset$ where the simplex category $\Delta$ is a skeleton of the category of finite nonempty ordinals and monotone maps between them, also has a join operation, $\star$. For simplicial sets $X$ and $Y$, their join $X \star Y$ is given by
\begin{equation}\label{eqn.simplicialjoin}
(X \star Y)_n = X_n \sqcup Y_n \sqcup \coprod_{i=0}^{n-1} X_i \times Y_{n-i-1}.
\end{equation}
Intuitively, $X \star Y$ contains disjoint copies of $X$ and $Y$ connected by adding, for each $n$-simplex $x \in X_n$ and each $m$-simplex $y \in Y_m$, an $n+m+1$ simplex which restricts to $x$ on the first $n+1$ vertices and restricts to $y$ on the remaining $m+1$ vertices.

This join operation has all the same properties as those for posets and categories, including the empty simplicial set as a unit, so simplicial sets form a dependence category where $\boxtimes_n^P$ sends $X^1,...,X^n$ to the simplicial set containing their sum along with the connecting simplices from $X^i$ to $X^j$ as in \eqref{eqn.simplicialjoin} when $i < j$ in $P$, as well as higher order connecting simplices: whenever $i_1 < \cdots < i_k$ in $P$, for $x_1 \in X^{i_1}_{m_1},...,x_k \in X^{i_k}_{m_k}$ there is an $(m_1 + \cdots + m_k + 1)$-simplex connecting them in $\boxtimes_n^P(X^1,...,X^n)$. 

It is easily checked that the fully faithful nerve functor from small categories preserves joins and sums, exhibiting $\smcat$ as a full dependence subcategory of simplicial sets.
\end{example}

\begin{remark}
Categories and simplicial sets generalize more than just the dependence category structure on posets; they also form categorical operads in their own right. Letting $\FC_n$ be the category of small categories with objects $\ul n$, one can define operadic composition of categories $\C \circ (\C_1,...,\C_n)$ by starting with $\C_1 \sqcup \cdots \sqcup \C_n$ and adding in a morphism from every object in $\C_i$ to every object in $\C_j$ for each morphism $i \to j$ in $\C$ (with composition of these arrows ``ignoring'' from $\C_i$ and composing with each other according to composition in $\C$). This is equivalent to the Grothendieck construction of the functor $\C \to \prof$ sending $i$ to the category $\C_i$ and each morphism $i \to j$ in $\C$ to the terminal profunctor from $\C_i$ to $\C_j$.

When $\C$ is a poset, this is precisely $\boxtimes_n^\C(\C_1,...,\C_n)$ from \cref{categoriesdependence}, which shows that there is a fullly faithful functor of categorical operads $\FP \to \FC$. As small categories are easily checked to form an $\FC$-pseudoalgebra, this functor of operads induces the dependence structure on small categories.

Somewhat analogously, the categorical operad of simplicial sets has as $\FS_n$ the category of simplicial sets with vertices $\ul n$. To define the operadic composition $X \circ (X^1,...,X^n)$, we again start with the sum $X^1 \sqcup \cdots \sqcup X^n$ and add connecting simplices according to the structure of $X$. Specifically, for each simplex $x \in X_k$ with vertices $i_1,...,i_k$ and each $x_1 \in X^{i_1}_{m_1},...,x_k \in X^{i_k}_{m_k}$ there is an $(m_1 + \cdots + m_k + 1)$-simplex connecting them in $X \circ (X^1,...,X^n)$. 

Similar to \cref{ssetsdependence}, this operad structure is preserved by the nerve functor, resulting in a fully faithful functor of categorical operads $\FC \to \FS$. The evident $\FS$-pseudoalgebra structure on simplicial sets and restriction along the operad functor $\FP \to \FC \to \FS$ then induce the dependence category structure on simplicial sets. Finally $\FS$ also has a full suboperad of simplicial sets which arise from simplicial complexes, which may be of independent interest, whose intersection with $\FC$ is precisely $\FP$.
\end{remark}

\begin{example}
Finally, topological spaces under sum and join form a physical duoidal category which extends to a dependence category, with the resulting operation $\boxtimes_n^P$ similar to that for simplicial sets. The geometric realization functor from simplicial sets to topological spaces preserves this structure (as it preserves sums and joins), though unlike the dependence functors from posets to categories and categories to simplicial sets this one is not fully faithful, so the dependence structure on simplicial sets (and thereby categories and posets) is not inherited from topological spaces. Also unlike the previous examples, the join of topological spaces is symmetric.
\end{example}

%

\chapter{Process Decoration}\label{chap.decoration}

In the introduction we discussed two distinct ways of reasoning categorically about dependence: physical duoidal categories, where any pair of objects can be combined in independent or dependent fashion, and categories of processes, where composition and tensors of processes only exist when dependence or independence of those processes holds as a property.
The connection between physical duoidal categories and categories of processes lies in decorating the processes with information from a physical duoidal category in a well-behaved manner, which we now make precise.

First, we define our notion of categories of processes: partial monoidal categories. This is inspired by the Causal categories in \cite[Definition 31]{caucats} and the approaches discussed in \cite{promonoidal}, though both of those impose significant additional structure.

\begin{definition}[{\cite[Definition 30]{caucats}}]
A symmetric strict partial monoidal category $\A$ (henceforth called a \emph{category of processes}) is defined analogously to a symmetric strict monoidal category, except the product functor $\otimes \colon \A \times \A \to \A$ is defined only on a full subcategory of $\A \times \A$ which includes $(A,I)$ and $(I,A)$ for all $A$ in $\A$, and which includes both $(A,B)$ and $(A \otimes B,C)$ if and only if it includes both $(B,C)$ and $(A,B \otimes C)$, for all $A,B,C$ in $\A$.
\end{definition}

Given two processes $f,g$, we will treat $g$ as dependent on $f$ when they have a composite $g \circ f$ and independent when they have a tensor product $f \otimes g$. 

\begin{definition}
Given a physical duoidal category $(\C,\yon,\otimes,\tri)$ and a category of processes $(\A,I,\otimes)$, a \emph{decoration} $d$ of $\A$ in $\C$ consists of:
\begin{itemize}
	\item for each $f \colon A \to B$ in $\A$, an object $d_f$ in $\C$;
	\item for each object $A$ in $\A$, an isomorphism $d_{\id_A} \cong \yon$ in $\C$;
	\item for each $f \colon A \to B$ and $f' \colon A' \to B'$ in $\A$ which admit a tensor product, a \emph{productor} morphism $d_f \otimes d_{f'} \to d_{f \otimes f'}$ in $\C$;
	\item and for each $f \colon A \to B$ and $g \colon B \to C$ in $\A$, a \emph{compositor} morphism $d_{g \circ f} \to d_g \;\tri\; d_f$
\end{itemize}
satisfying the evident unit and associativity equations, along with for each $f \colon A \to B$, $g \colon B \to C$, $f' \colon A' \to B'$, and $g' \colon B' \to C'$ in $\A$ admitting all of the relevant tensor products the interchange equation making the diagram in \eqref{eqn.decorationdiag} commute.
\begin{equation}\label{eqn.decorationdiag}
\begin{tikzcd}[column sep=10pt]
& d_{(g \circ f) \otimes (g' \circ f')} \rar[equals] &[10pt] d_{(g \otimes g') \circ (f \otimes f')} \ar{dr} \\
d_{g \circ f} \otimes d_{g' \circ f'} \ar{ur} \ar{dr} & & & d_{g \otimes g'} \;\tri\; d_{f \otimes f'} \\
& (d_g \;\tri\; d_f) \otimes (d_{g'} \;\tri\; d_{f'}) \rar & (d_g \otimes d_{g'}) \;\tri\; (d_f \otimes d_{f'}) \ar{ur}
\end{tikzcd}
\end{equation}
\end{definition}

The intuition behind the directions of the productors and compositors, and their potential non-invertibility, is most easily seen in the example of parallel computing.

\begin{example}[Program runtime]
Let $\A$ be a category whose objects are lists of variables and datatypes, which we call \emph{contexts}, such as
\[ (n : \texttt{Int}, b : \texttt{Bool}, x : \texttt{Real}^2), \]
and morphisms from one list to another are certain algorithms which take as input the variables in the first list and produce as output values for the variables in the second list. Composition is given by sequentially following one program with the other. It is a category of processes where the tensor product of contexts is given by sum, defined only for disjoint pairs of contexts. The tensor product of two algorithms whose input and output contexts are respectively disjoint is the algorithm which performs both computations independently (without requiring a choice of order between the steps of the two algorithms).

A decoration of $\A$ in the physical duoidal category of tropical real numbers can be regarded as an assignment of a runtime to each program, perhaps according to the implementation of the algorithms on some computer (real or abstract). The productors $d_f \max d_{f'} \le d_{f \otimes f'}$ in $\rrnn$ encode how it is impossible to run the independent combination of programs $f,f'$ faster than by running $f$ and $f'$ entirely in parallel, while the compositors $d_{g \circ f} \le d_g + d_f$ encode how any sensible runtime strategy for the sequential composition of $f$ and $g$ should not exceed the sum of their separat runtimes.

But why would $g \circ f$ be able to run faster than the sum of its parts? This is because $f$ and $g$ may themselves be tensor products of independent programs. For instance, if 
\[ f = f \otimes f' \colon (x : T) \otimes (x' : T') \to (y : U) \otimes (y' : U') \]
and
\[ g = g \otimes g' \colon (y : U) \otimes (y' : U') \to (z : V) \otimes (z' : V'), \]
where $d_{f} = 1$, $d_{f'} = 4$, $d_{g} = 3$, and $d_{g'} = 1$, then $d_g + d_f$ is as least 7. However, 
\[ g \circ f = (g \otimes g') \circ (f \otimes f') = (g \circ f) \otimes (g' \circ f') \]
could plausibly be run in as few as $\max(3+1,1+4) = 5$ units of time by running $g \circ f$ and $g' \circ f'$ in parallel.

While this decoration assigns to each program only its runtime, we could also consider decorating programs with objects in some physical duoidal category of ``implementations,'' which could look something like Gantt charts (see for instance \cite{gantt}) which arrange a collection of tasks with durations and dependencies into a chart of at what time each task will be completed, such that multiple tasks may overlap but each must not begin until all of the tasks it depends on have been completed. 
\end{example} 

We say a decoration is \emph{efficient} if the productor $d_f \otimes d_{f'} \to d_{f \otimes f'}$ is an isomorphism for all $f,f'$ which admit a tensor product. In the computational runtime example this corresponds to arbitrary parallel computing capability, as well as the capacity for a computer to recognize when a program has a tensor-decomposition and run each of its $\otimes$-components in parallel.

\begin{example}[Graph-generated decorations]\label{partialgraph}
One of the difficulties in finding efficient decorations is when processes in $\A$ do not admit canonical $\otimes$-decompositions. A class of categories of processes which do have these decompositions are those which are freely generated by a \emph{partial graph}, namely a graph of the form 
\[ G = \left( E \rightrightarrows V \right) \]
equipped with a relation $||$ on $V$. The free symmetric monoidal category on $G$ (defined similarly to the free monoidal category on a graph from \cite[Example 1.2]{shapiro2022enrichment}, further adding in the symmetry isomorphisms) produces a partial monoidal subcategory $\cat{G}$ in which vertices $x,x'$ only admit a tensor product when $x || x'$. In particular, objects in this category are finite lists of vertices in $G$ in which each adjacent pair admit a tensor product. A morphism between two lists, both of fixed length $n$, consists of $n$ paths in $G$ from the vertices in the first list to a permutation of the vertices in the second list. 

For any physical duoidal category $\C$, this partial monoidal category $\cat{G}$ admits an efficient decoration generated by any assignment of $d_f$ in $\C$ to each edge $f$ in $G$. The unique morphism from the monoidal unit to itself is decorated with $\yon$, while each morphism 
\[ f_{1,1},...,f_{1,m_1}; \cdots ; f_{n,1},...,f_{n,m_n} \]
consisting of $n$ disjoint paths, i.e.\ consecutive edges $f_{i,1},\ldots,f_{i,m_i}$ for each $i$. Any permutation on vertices is decorated with
\[ (d_{f_{1,1}} \;\tri\; \cdots \;\tri\; d_{f_{1,m_1}}) \otimes \cdots \otimes (d_{f_{n,1}} \;\tri\; \cdots \;\tri\; d_{f_{n,m_n}}). \]
This assignment is efficient by definition, as each morphism is decorated by the $\otimes$-product of the decorations of its tensor-components, and the compositor $d_{g \circ f} \to d_g \;\tri\; d_f$ is derived from the lax interchanger of $\C$, which makes \eqref{eqn.decorationdiag} commute automatically.
\end{example}

This construction applies, for instance, to a category of programs generated under composition and (partially defined) tensor products by a graph of ``atomic'' programs between single-variable contexts. However, in practice one might want to consider atomic programs between contexts with multiple inputs and outputs. Such programs can be composed according to more complicated string diagrams than the disjoint unions of paths forming the processes in \cref{partialgraph}. 

Recall (from, for instance, \cite[Definition 2.5]{shapely}) that a polygraph consists of a set of vertices along with sets of $m$-to-$n$ arrows between those vertices for each $m,n \in \nn$.

\begin{definition}[Partial polygraphs]
A partial polygraph consists of a polygraph equipped with a relation $||$ on its vertices, such that for each edge from $(x_1,...,x_m)$ to $(y_1,...,y_n)$, $x_i || x_{i+1}$ and $y_j || y_{j+1}$ for all $i = 1,...,m-1$ and $j = 1,...,n-1$.
\end{definition}

Similar to the construction of a freely generated partial monoidal category in \cref{partialgraph}, a partial polygraph generates a partial monoidal category whose objects are \emph{valid} lists of vertices $(x_1,...,x_n)$, meaning that $x_i || x_{i+1}$ for $i = 1,...,n-1$. The morphisms between lists are given by string diagrams of the edges and symmetries, which are ``valid at each stage'' as in \eqref{eqn.stringdiagram}. 
\begin{equation}\label{eqn.stringdiagram}
\begin{tikzpicture}[oriented WD,bb min width =1cm, bbx=1.2cm, bb port sep =2,bb port length=.08cm, bby=.15cm]
  \node [bb={1}{3}] (YY1) {$f$};
  \node [bb={1}{1},below=10 of YY1] (YY2) {$g$};
	\node [bb={1}{2}, below right=1 and 1 of YY1] (XX11) {$h$};
  \node [bb={2}{2}, below right=1 and 1 of XX11] (XX12) {$i$};
  \node [bb={1}{1}, above right=3 and 1 of XX12] (XX13) {$j$};
  \node [bb={3}{1}, above right=1 and 1 of XX13] (YY4) {$k$};
  \node [bb={2}{1}, below=8 of YY4] (YY5) {$\ell$};
  \begin{scope}[white]
    \node [bb={0}{0},fit={($(YY1.north)+(0,2)$) (YY2.south west) (YY4) (YY5) (XX12)}] (NN) {};
  \end{scope}
  \node[coordinate] at (NN.west|-YY1_in1) (NN_in1) {};
  \node[coordinate] at (NN.west|-YY2_in1) (NN_in2) {};
  \node[coordinate] at (NN.east|-YY4_out1) (NN_out1) {};
  \node[coordinate] at (NN.east|-YY5_out1) (NN_out2) {};
  \draw[ar] (NN_in1) to node[above, font=\footnotesize] {$x_1$} (YY1_in1);
  \draw[ar] (NN_in2) to node[above, font=\footnotesize] {$x_2$} (YY2_in1);
  \draw[ar] (YY4_out1) to node[above, font=\footnotesize] {$y_1$} (NN_out1);
  \draw[ar] (YY5_out1) to node[above, font=\footnotesize] {$y_2$} (NN_out2);
  \draw[ar] (YY1_out1) to node[above, font=\footnotesize] {$z_1$} (YY4_in1);
  \draw[ar] (YY1_out2) to node[below, font=\footnotesize] {$z_2$} (YY4_in2);
  \draw[ar] (YY1_out3) to node[below left, font=\footnotesize] {$z_3$} (XX11_in1);
  \draw[ar] (YY2_out1) to node[above, font=\footnotesize] {$z_4$} (XX12_in2);
  \draw[ar] (XX13_out1) to node[below right, font=\footnotesize] {$z_5$} (YY4_in3); 
  \draw[ar] (XX12_out2) to node[above, font=\footnotesize] {$z_6$} (YY5_in2);
  \draw[ar] (XX11_out1) to node[above left=3 and 1, font=\footnotesize] {$z_7$} (YY5_in1);
  \draw[ar] (XX11_out2) to node[below left, font=\footnotesize] {$z_8$} (XX12_in1);
  \draw[ar] (XX12_out1) to node[above=2, font=\footnotesize] {$z_9$} (XX13_in1);
\end{tikzpicture}
\end{equation}
In this diagram the boxes represent edges in the polygraph and the vertices are suppressed, so that the string $z_1$ from $f$ to $k$ can be interpreted as a vertex $z_1$ which is the first target of $f$ and the first source of $k$. Validity at each stage here means that the lists 
\[(z_1,z_2,z_3,z_4),\;(z_1,z_2,z_7,z_8,z_4),\;(z_1,z_2,z_7,z_9,z_6),\;(z_1,z_2,z_9,z_7,z_6),\;(z_1,z_2,z_5,z_6)\]
are all valid, in addition to $(x_1,x_2)$ and $(y_1,y_2)$.

We wish to use a dependence structure on $\C$ to decorate this partial monoidal category by assigning to each edge an object in $\C$ and extending this to all string diagrams using the dependence structure. To do so, we must first associate to each string diagram a finite poset on its edges.

\begin{definition}
Given a string diagram, its \emph{edge poset} has as elements the edges in the string diagram, and its relation generated by setting $f < g$ whenever a target vertex of $f$ is a source vertex of $g$.
\end{definition}

For instance, the edge poset for the string diagram in \eqref{eqn.stringdiagram} is generated by the arrows in \eqref{eqn.edgeposet}
\begin{equation}\label{eqn.edgeposet}
\left(\begin{tikzcd}[row sep=small]
f \ar{rrrr} \ar{dr} & & & & k \\
& h \ar{dr} \ar{drrr} & & j \ar{ur} \\
g \ar{rr} & & i \ar{rr} \ar{ur} & & \ell
\end{tikzcd}\right)
\end{equation}

\begin{remark}
Any finite poset can be realized as the edge poset of some string diagram shape. Given a finite poset $P$ and a choice of two natural numbers $s_i,t_i$ for each of its elements $i$, there is a string diagram shape from $\sum_i s_i$ vertices to $\sum_i t_i$ vertices, where each element of $P$ corresponds to an edge in the string diagram with $s_i$ open source vertices, $t_i$ open target vertices, and whenever $i < j$ is an atomic relationship in $P$, an internal vertex which is a target of edge $i$ and a source of edge $j$. As any two edges are adjacent along at most one vertex, these are precisely the string diagram shapes that can be composed in a symmetric polycategory (\cite[Definition 2.1]{shapely}).
\end{remark}

\begin{theorem}\label{dependencedecoration}
For any partial polygraph $X$, dependence category $\C$, and assignment of an object in $\C$ to each edge in $X$, the partial monoidal category generated by $X$ admits an efficient decoration in $\C$, where a string diagram consisting of edges $f_1,...,f_n$ is decorated by $\boxtimes_n^P(d_{f_1},...,d_{f_n})$ for $P$ the edge poset of the string diagram.
\end{theorem}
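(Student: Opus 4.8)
The plan is to define the decoration by the formula in the statement and then derive all of its structure maps and coherence equations by transporting combinatorial facts about edge posets through the $\FP$-pseudoalgebra structure of $\C$. Write $\cat{X}$ for the partial monoidal category generated by $X$; recall that its morphisms are the valid string diagrams over $X$, built out of the edges of $X$ and the symmetries under $\circ$ and the partially defined $\otimes$. For a string diagram $\phi$ whose edge set is enumerated as $f_1,\dots,f_n$, with edge poset $P_\phi$ on $\ul n$, set $d_\phi\coloneqq\boxtimes_n^{P_\phi}(d_{f_1},\dots,d_{f_n})$; this is independent of the chosen enumeration up to the equivariance isomorphisms of the pseudoalgebra, so $d$ is well defined on morphisms. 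The identity string diagram on any object has no edges, so $P_\phi=\varnothing$ and $d_{\id}=\boxtimes_0^\varnothing\cong\yon$ by \cref{expr_operad}, which supplies the unit datum of a decoration.

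The engine of the argument is a dictionary between composition of string diagrams and operadic composition of their edge posets in $\FP$. First, if $f\otimes f'$ is defined, its string diagram is a disjoint juxtaposition, so no wire joins an edge of $f$ to an edge of $f'$ and $P_{f\otimes f'}=P_f\sqcup P_{f'}$ on the nose; since $P_f\sqcup P_{f'}$ is the operadic composite of the two-element discrete poset with $(P_f,P_{f'})$ and $\boxtimes_2$ of that discrete poset is $\otimes$ (by \cref{expr_operad}), the composition isomorphism of the pseudoalgebra yields a canonical iso $d_{f\otimes f'}\cong d_f\otimes d_{f'}$, which we take as the productor; in particular every productor is invertible, so the decoration is automatically efficient. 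Second, if $g\circ f$ is defined, its string diagram stacks $g$ after $f$, and the only new wire-adjacencies run from a target vertex of an $f$-edge to a source vertex of a $g$-edge; hence $P_{g\circ f}$ contains $P_f$ and $P_g$ as full subposets, every additional relation goes between an $f$-edge and a $g$-edge, and the identity function exhibits $P_{g\circ f}$ as an identity-on-elements subposet of the join of $P_f$ and $P_g$. Pushing this inclusion through the pseudoalgebra structure functor and composing with the composition isomorphism identifying $\boxtimes$ of the operadic composite $(1<2)\circ(P_f,P_g)$ with a $\tri$-product produces the compositor $d_{g\circ f}\to d_g\tri d_f$. The same reasoning applied to a string diagram $\psi$ with $n$ edges and string diagrams $\phi_1,\dots,\phi_n$ plugged into them shows that the edge poset of the nesting is an identity-on-elements subposet of the lexicographic composite $P_\psi\circ(P_{\phi_1},\dots,P_{\phi_n})$ of \cref{substitution}.

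With this dictionary in hand the remaining decoration axioms become formal. Every object occurring in the unit, associativity, and interchange equations — including mixed expressions such as $(d_g\tri d_f)\otimes(d_{g'}\tri d_{f'})$ — is, up to the pseudoalgebra's coherence isomorphisms, the image under $\boxtimes$ of a poset, or of an iterated operadic composite of posets, on the total edge set, and every structure map is the image of a morphism of $\FP$ or of an operadic composite of such. Because each $\FP_N$ is itself a \emph{poset}, there is at most one morphism between any two of its objects, so the relevant diagrams in $\FP_N$ commute trivially; the associativity, unit, and equivariance axioms of a pseudoalgebra then transport this commutativity to $\C$. In particular the interchange square \eqref{eqn.decorationdiag} is the image under $\boxtimes$, precomposed with the lax interchanger of $\C$, of the comparison of the two ways of reading the edge poset of $(g\circ f)\otimes(g'\circ f')=(g\otimes g')\circ(f\otimes f')$ off the edge posets of $f,f',g,g'$, and hence commutes.

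I expect the main obstacle to be establishing the geometric dictionary cleanly: one must verify carefully that side-by-side composition of string diagrams realizes $\sqcup$ of edge posets exactly, that vertical composition realizes an identity-on-elements inclusion into the join of edge posets, and that nesting string diagrams along edges realizes an identity-on-elements inclusion into the lexicographic composite. This amounts to pinning down precisely which wire-adjacencies are created by each kind of composition and checking that no spurious relations between edges in incomparable positions appear. Once this is done, coherence of the decoration is purely formal, since $\FP_N$ being a poset collapses every coherence question to an identity, and efficiency has already been observed.
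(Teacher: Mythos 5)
Your proposal is correct and follows essentially the same route as the paper's proof: the productor is the isomorphism coming from $P_{f\otimes f'}=P_f\sqcup P_{f'}$ (giving efficiency by definition), the compositor is the dependence structure map induced by the identity-on-elements inclusion of the edge poset of $g\circ f$ into the join of the edge posets, and the interchange square \eqref{eqn.decorationdiag} commutes because all its maps are images of (operadic composites of) morphisms in $\FP$, which the paper phrases as ``all of its morphisms are dependence structure maps.'' Your added remarks on enumeration-independence via equivariance and on $\FP_N$ being a poset only make explicit what the paper leaves implicit.
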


\begin{proof}
Efficiency of the decoration is provided by the definition, as a tensor product of string diagrams is simply their disjoint union, as is the corresponding edge poset. This is sufficient as in the dependence category $\C$ we have
\[ \boxtimes_{n_1+n_2}^{P_1 \sqcup P_2} = \boxtimes_{n_1}^{P_1} \otimes \boxtimes_{n_2}^{P_2}. \]

For the compositors, note that for two adjacent string diagrams $f,g$ with edge posets $P,Q$, the edge poset of their composite $g \circ f$ has an identity-on-elements inclusion into $P \Join Q$. This is because the former contains $P \sqcup Q$ by definition, has no additional elements, and any relations between the two go from an edge in $f$ to an edge in $g$. The compositor is then given by the structure map in $\C$ corresponding to this identity-on-elements inclusion. 

Furthermore, by this definition the diagram in \eqref{eqn.decorationdiag} commutes automatically, as all of its morphisms are dependence structure maps in $\C$. In particular, the compositor $d_{(g \otimes g') \circ (f \otimes f')} \to d_{g \otimes g'} \;\tri\; d_{f \otimes f'}$ is precisely the operadic composite of the lax interchanger with the identity maps on the edge posets of $f,f',g,g'$, so up to the productor isomorphisms it agrees with the lax interchanger on the bottom of the diagram by the pseudoalgebra equations.
\end{proof}

\begin{example}\label{efficientparallel}
\cref{dependencedecoration} lets us use the dependence structure on the tropical reals (\cref{tropicaldependence}) as a protocol for running parallelizable programs. Given a polygraph of atomic programs, each from one list of types and variables to another, a string diagram as in \eqref{eqn.stringdiagram} represents a valid way network-composite of those programs with dependencies given by the edge poset. Given a runtime for each atomic program, the decoration given by the dependence structure tells us that this network can be run in the time it takes to run the most expensive dependent sequence of programs in the network. For instance this could be achieved by immediately running as a new parallel thread each atomic program in the network as soon as all its predecessors have terminated.
\end{example}

\begin{example}
Decoration in a dependence category can also describe associating possible outcomes and stimuli to processes in spacetime. The category of processes can be regarded as having objects certain subsets of Minkowski space and morphisms relating to timelike trajectories through spacetime (similar to the main example in \cite{promonoidal}). If certain such objects and processes are selected as generators, then the construction in \cref{dependencedecoration} allows choices of polynomials (\cref{polydependence}) encoding the outcomes and stimuli of these processes to be extended to decorate compound processes built out of them.
\end{example}

\begin{KeepFromToc}

\chapter*{Declarations}

\section*{Author Contributions}

Brandon T. Shapiro and David I. Spivak both wrote and reviewed this manuscript.

\section*{Funding}

This material is based upon work supported by the Air Force Office of Scientific Research under award number FA9550-20-1-0348.

\section*{Data Availability}

There is no additional data associated to the results we present in this paper.

\section*{Competing Interests}

The authors have no competing interests as defined by Springer, or other interests that might be perceived to influence the results and/or discussion reported in this paper.

\end{KeepFromToc}

\printbibliography 
\end{document}